\documentclass{amsart}
 \usepackage{amssymb,epsfig}
\newcommand{\R}{\mathbf{R}}
\newcommand{\E}{\mathbf{E}}

\newcommand{\K}{\kappa}
\newcommand{\x}{X}

\theoremstyle{plain} 

\newcommand{\CAT}{\text{\rm CAT}}

\newcommand{\CBB}{\text{\rm CBB}}

\newcommand{\conv}{\mathbf{\breve{\mathcal{C}}}}
\newcommand{\conc}{\mathbf{\widehat{\mathcal{C}}}}

\newtheorem{thm}{Theorem}[section]
\newtheorem{cor}[thm]{Corollary}
\newtheorem{prop}[thm]{Proposition}
\newtheorem{lem}[thm]{Lemma}

\newtheorem{claim}{}

\theoremstyle{definition} 

\newtheorem{rem}[thm]{Remark}
\newtheorem{rems}[thm]{Remarks}
\newtheorem{defn}[thm]{Definition}

\DeclareMathOperator{\tang}{T}\DeclareMathOperator{\dirn}{\Sigma}
\DeclareMathOperator{\sn}{sn}
\DeclareMathOperator{\grad}{\nabla}
\DeclareMathOperator{\inte}{int}
\DeclareMathOperator{\cl}{closure\,}

\DeclareMathOperator{\diam}{diam}
\DeclareMathOperator{\id}{id}
\DeclareMathOperator{\length}{length}
\DeclareMathOperator{\speed}{speed}

\DeclareMathOperator{\diff}{d}

\DeclareMathOperator{\curv}{curv}
\DeclareMathOperator{\cone}{Cone\,}
\DeclareMathOperator{\dist}{\,dist}

\DeclareMathOperator{\dis}{\,distortion}

\numberwithin{equation}{section}
\setcounter{tocdepth}{1}

\begin{document}
\begin{abstract}
This paper completes a fundamental construction in Alexandrov geometry.
Previously we gave a  new construction of metric spaces with curvature bounds either above or below, namely warped products with intrinsic metric space base and fiber, and with possibly vanishing warping functions -- thereby extending the classical cone and suspension constructions  from interval base to arbitrary base, and furthermore encompassing gluing constructions.  This paper proves the converse, namely, all conditions of the theorems are necessary.    Note that in the cone construction, both the construction and its converse are widely used.  We also show that our  theorems for curvature bounded above and below, respectively, are dual. We give the first systematic development  of  basic properties of warped products of metric spaces with possibly vanishing warping functions, including new properties.
\end{abstract}

\title{Warped products admitting a curvature bound}
\author{Stephanie B. Alexander}\thanks
{This work was partially supported by a grant from the Simons Foundation (\#209053  Êto Stephanie
 Alexander).}
\address{1409 W. Green St., Urbana, Illinois 61801}
\email{sba@math.uiuc.edu}

\author{Richard L. Bishop}
\address{1409 W. Green St., Urbana, Illinois 61801}
\email{bishop@math.uiuc.edu}

\keywords{Alexandrov spaces, CAT(K) spaces, warped products}
\subjclass{53C20}
\maketitle
\tableofcontents

\begin{flushright}
\today
\end{flushright}

\section{Introduction}\label{sec:intro}
This paper completes a fundamental construction in the theory of Alexandrov spaces. In classical Riemannian geometry, warped products provide perhaps the major source of examples and counter-examples.  In \cite{ab-wp}, a  new construction of metric spaces with curvature bounds either above or below was given, namely warped products with intrinsic metric space base and fiber, and with possibly vanishing warping functions -- thereby extending the classical cone and suspension constructions  from interval base to arbitrary base, and furthermore 
encompassing gluing 
constructions.

This paper proves  
the converse, 
namely, all the conditions of the theorems are necessary. 
Note that in the cone construction,   
the implications in both directions 
are widely used.  

Among the rather delicate arguments required, it turned out that  
adequate tools to handle all of them were not available when \cite{ab-wp} was written. 
For spaces of curvature bounded below, we know of no proof that does not use 
Petrunin's globalization theorem for incomplete spaces \cite{globalization}.  
We use it to prove a gluing theorem on the closure of the subset of the boundary on which  the warping function is nonvanishing. 
For  curvature bounded above, 
to prove the correct  bound on the fiber we had first to obtain a Gauss Lemma for curvature of general subspaces 
 \cite{AB-gauss}.  

In  \cite{ab-wp}, we did not recognize  that our  theorems for curvature bounded above and below, respectively, were completely dual.  Here we show this by proving equivalent formulations of the original hypotheses.

We also give a 
systematic development, including  new properties, of warped products of metric spaces with somewhere-vanishing warping functions.
These   were introduced in \cite{ab-wp}, where their treatment was  \emph{ad hoc}.
Somewhere-vanishing warping functions greatly enrich our source of examples and counter-examples by allowing  gluing on subsets.

The proofs given here illustrate a range of techniques and constructions in Alexandrov geometry.  We try to bring into focus the dualities between curvature bounded below and above

\section{Statement of theorems}
\label{sec:theorems}
Let $(\,X,\ |**\,|:X\times X\to [\,0,\infty)\,)$ be a metric space.  The \emph{model angle} $\tilde\angle^{\kappa} \bigl[{x^i}\!\!\!<^{\,x^j}_{\,x^k}\bigr]$ is the angle corresponding to $x^i$ in the \emph{model triangle} $\tilde\triangle^{\kappa}\bigl[x^i\,x^j\,x^k\bigr]$ with sidelengths $|x^i\,x^j|,|x^j\,x^k|,|x^k\,x^i|$, in 
the complete simply connected
surface of constant curvature $\kappa$.  We call that surface the \emph{model surface} for $\kappa$.  The model triangle and model angle are said to be \emph{defined} if there
is a unique triangle in the model surface with those sidelengths. In particular, the perimeter of the model triangle is $\le \varpi^\kappa=\pi/ \!\sqrt{\kappa}$ ($=\infty$ if $\kappa\le 0$).

As is well known, 
$X\in\CBB^\kappa$ and $X\in\CAT^\kappa$ may be defined using \emph{point-side $\kappa$-comparisons}. 
Namely, for every point $x^1$ and geodesic $[x^2\,x^3]$ such that  $\tilde\triangle^\kappa\bigl[x^1\,x^2\,x^3\bigr]$ is defined,  the  distance between $x^1$  and each point of $[x^2\,x^3]$  is $\,\ge\,$ (for $\CBB^\kappa$) or $\,\le\,$ (for $\CAT^\kappa$)
the distance between the corresponding points of $\tilde\triangle^\kappa
\bigl[x^1\,x^2\,x^3\bigr]$.

However, we are going to use instead, equivalent definitions that depend on distance only.  Our  $\CAT^\kappa$ definition is new in \cite{akp-book}.  See Section \ref{subsec:curv-bounds}.

Let  $f$ be a locally Lipschitz function 
defined on a metric space. For $\kappa\in\R$, we say $f$ is {\em sinusoidally}
 $\kappa$-{\em convex}, written 
 $f\in\conv^\kappa$, if 
 for every unit-speed geodesic $\gamma$,
 $$(f\circ\gamma)''+\kappa\cdot (f\circ\gamma)  \ge0.$$
If the inequality is reversed, 
we say $f$ is {\em sinusoidally} $\kappa$-{\em concave}, written  $f\in \conc^\kappa$.
The inequalities are meant in the generalized 
sense:   if $y''+\kappa\cdot  y=0$, and $y$ and 
$(f\circ\gamma)$ are defined on and coincide at the endpoints of a sufficiently short interval, then $(f\circ\gamma)\le y$  \,(respectively $\ge y$\,) That is, there are \emph{two-point supports}. Equivalently, at every point there is a solution $y$ of $y''+\kappa\cdot  y=0$ defined on an open interval, coinciding with $f\circ\gamma$ at that point, and satisfying the opposite inequality -- \emph{tangential supports} exist.
Thus sinusoidal $0$-convexity ($0$-concavity) is convexity (concavity) in the usual sense. 
\vspace{2mm}

\begin{defn}\label{defn:triple}
Let $B$ and $F$ be intrinsic spaces, and $f:B \to \R_{\ge 0}$\, be
locally Lipschitz.  Suppose $F\ne$ point, and $Z=f^{-1}(0)\neq B$.  
 When we denote a warped product by $B\times_fF$, we assume  $(B, f,F)$ is such a triple, which we call a \emph{WP-triple}. 
\end{defn}

\begin{thm} [$\CAT^\kappa$]\label{thm:mainA}\,\, 
Let $(B, f,F)$ be a WP-triple, 
and assume $f$ is Lipschitz on bounded sets. 
Then
$B \times_fF\in\CAT^\kappa$  if and only if the following conditions hold, where $Z=f^{-1}(0)$:\begin{enumerate}

\item  \label{mainA:base-f}
$B\in\CAT^\kappa$ and $f\in\conv^\kappa$.

\vspace{1mm}
\item 
\label{mainA:fiber-f>0}
If $Z = \emptyset$, then  $F\in\CAT^{\,\kappa_F}$
for 

\vspace{-4mm}
\[\kappa_F=\kappa\cdot (\inf f)^2.\]

\item
\label{mainA:fiber-f=0} 

If $Z \ne \emptyset$, then 
$F\in\CAT^{\,\kappa_F}$
for $\kappa_F=
\min\,\{\kappa_{\textrm{foot}}\,,\,\kappa_{\textrm{far}}\}$, where
\vspace{-4mm}

\begin{align*}
\kappa_{\textrm{foot}} =\inf\,\{(f\hspace{-.4mm}\circ\hspace{-.2mm}\alpha)^+\hspace{-.2mm}&\hspace{-.2mm}(0)^2 : \alpha =\!\dist_Z\hspace{-.5mm}\textrm{-realizer with footpoint } \alpha(0)\in Z,\, |\alpha^+(0)|=1\},\notag\\
&\kappa_{\textrm{far}}= \inf \{\kappa\cdot f(p)^2 : \ \dist_Z( p ) \ge \varpi^\kappa/2\}.
\end{align*}
\end{enumerate}
\end{thm}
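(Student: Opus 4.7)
The plan is to prove the converse (necessity) direction, since the sufficiency is the content of \cite{ab-wp}. Throughout, set $X := B \times_f F$ and $Z = f^{-1}(0)$, and assume $X \in \CAT^\kappa$; I need to extract conditions (1)--(3).

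To derive condition (1), I would first fix $q_0 \in F$ (possible since $F \ne$ point). The slice map $\iota_{q_0}: B \to X$, $b \mapsto (b, q_0)$, is length-preserving on $B$-curves that do not meet $Z$, hence distance-nonincreasing; a short argument rules out $X$-shortcuts, since any such shortcut would force its $B$-projection to pass through $Z$, exceeding the base-geodesic length between points of $B\setminus Z$. Thus $\iota_{q_0}$ is a local isometry on $B\setminus Z$, so $\CAT^\kappa$ triangle comparison in $X$ transfers to $B\setminus Z$; continuity of distances then extends the bound to all of $B$, which is legitimate since $Z \ne B$ and $B$ is intrinsic. For sinusoidal $\kappa$-convexity of $f$: along a unit-speed $B$-geodesic $\gamma$ and for $q_1,q_2\in F$ close, the warped-product distance $|(\gamma(t),q_1)\,(\gamma(t),q_2)|$ equals $f(\gamma(t))\cdot|q_1q_2|_F$ to leading order. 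Applying $\CAT^\kappa$ comparison to the quadrilateral $\{(\gamma(s),q_i)\}_{s=0,1;\,i=1,2}$ and letting $|q_1q_2|_F\to 0$ produces the two-point-support inequality $(f\circ\gamma)''+\kappa\cdot(f\circ\gamma)\ge 0$ via the Jacobi equation on the model surface.

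For the fiber condition I would first handle case (2). Pick $b \in B\setminus Z$; in the neighborhood $\{b\}\times B_F(q_0,\rho)$ the warped-product distance agrees to first order with $f(b)$ times the $F$-distance, so rescaled triangle comparison in $X$ yields $F\in\CAT^{\kappa\cdot f(b)^2}$ locally, and taking $\inf$ over $b$ produces the stated $\kappa_F$. For case (3), the same argument applied to $b$ with $\dist_Z(b)\ge \varpi^\kappa/2$ controls the required comparison radius and yields $\kappa_{\textrm{far}}$. The delicate part is $\kappa_{\textrm{foot}}$: near $Z$ the naive scaling constant $\kappa\cdot f(b)^2$ degenerates to $0$, but the rate at which $f$ grows off $Z$ along a $\dist_Z$-realizer $\alpha$ provides a sharper bound. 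Parametrizing slabs $\{\alpha(t)\}\times F$ as $t\to 0^+$ and applying the Gauss Lemma of \cite{AB-gauss} to the curved subspace swept out by such $\alpha$, I would convert the ambient $\CAT^\kappa$ bound into a bound on $F$ of order $(f\circ\alpha)^+(0)^2$; taking $\inf$ over $\alpha$ recovers $\kappa_{\textrm{foot}}$, and combining with $\kappa_{\textrm{far}}$ gives $\kappa_F=\min\{\kappa_{\textrm{foot}},\kappa_{\textrm{far}}\}$.

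The main obstacle will be $\kappa_{\textrm{foot}}$. Its derivation requires a curvature bound on a subspace that is itself not convex and whose intrinsic geometry is controlled by the directional derivatives of $f$ along boundary-distance realizers. This is precisely what the Gauss Lemma for general subspaces of \cite{AB-gauss} supplies, and as the introduction notes, is the reason that lemma had to be developed before the present converse could be proved. A secondary obstacle is bookkeeping: one must ensure that conditions verified on $B\setminus Z$ and on first-order slice approximations propagate consistently to the closure, for which closedness of both $\conv^\kappa$ and $\CAT^\kappa$ under the relevant pointed limits is essential.
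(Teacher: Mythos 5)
Your overall map of the argument is in the right direction: horizontal leaves give $B\in\CAT^\kappa$, and the Gauss equation of \cite{AB-gauss} applied to vertical leaves is indeed the engine behind the fiber bound near $Z$. But several of the steps as you have stated them do not work, and two of them are serious.

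First, the claim that ``rescaled triangle comparison in $X$ yields $F\in\CAT^{\kappa\cdot f(b)^2}$ locally'' for an arbitrary $b\in B\setminus Z$ is false. The vertical leaf $\{b\}\times F$ is convex in $B\times_fF$ only when $b$ is a minimum point of $f$ (Proposition \ref{prop:wp-proj}\,(\ref{f-min})); otherwise geodesics between nearby leaf points dip toward smaller values of $f$, and the ``first-order agreement'' of $X$-distance with $f(b)$ times $F$-distance conceals an $O(\rho^3)$ discrepancy. That cubic error is precisely what Definition \ref{def:ext-curv} calls extrinsic curvature, and it does not disappear in the limit: the correct conclusion is the Gauss-equation bound $\curv(\{b\}\times F)\le\kappa+A^2$ with $A=|\nabla_b(-f)|/f(b)$, i.e.\ $\curv F\le \kappa f(b)^2+|\nabla_b(-f)|^2$. ``Taking $\inf$ over $b$'' of the smaller, wrong quantity $\kappa f(b)^2$ gives nonsense when $\kappa\le 0$ (it picks up $\sup f$ rather than $\inf f$, and may be $-\infty$). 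To recover $\kappa_F=\kappa(\inf f)^2$ you must locate a minimizing sequence for $f$ along which the downward gradient also tends to $0$, which needs a gradient-flow argument (Theorem \ref{thm:A-fiber}, Claim \ref{F-comparison-inf}).

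Second, $\kappa_{\textrm{far}}$ is a $\kappa>0$ phenomenon (for $\kappa\le 0$ the condition $\dist_Z(p)\ge\varpi^\kappa/2$ is vacuous and $\kappa_{\textrm{far}}=+\infty$), and ``the same argument applied to $b$ far from $Z$'' does not produce it. Lemma \ref{lem:fiberbound} is proved only for $\kappa\le0$. The paper reduces $\kappa>0$ to $\kappa=0$ via the identity $\cone(B\times_fF)=\cone B\times_{\cone f}F\in\CAT^0$, with $\cone f\in\conv^0$; then $Z(\cone f)=\cone Z\cup\{o\}$, and $\kappa_{\textrm{far}}$ appears exactly from those points $p$ whose $\dist_{Z(\cone f)}$-realizer in $\cone B$ has footpoint at the cone vertex $o$. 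Without this cone reduction I do not see how to obtain $\kappa_{\textrm{far}}$.

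Two smaller gaps. Your quadrilateral argument for $f\in\conv^\kappa$ needs $|(\gamma(t),q_1)\,(\gamma(t),q_2)|$ controlled to second order, uniformly in $t$, and the leading-order estimate is not enough; the paper's cleaner device is Theorem \ref{thm:wf-convexity}\,(\ref{d-convex-set}), $\sn^\kappa\circ\dist_{B\times\{0\}}\in\conv^\kappa$, combined with Lemma \ref{lem:f-lim} (after first reducing to an interval fiber via Lemma \ref{lem:F=interval}). And passing from the local bound $\curv F\le\kappa_F$ to $F\in\CAT^{\kappa_F}$ requires the patchwork globalization step (uniqueness and continuity of $F$-geodesics via Lemma \ref{lem:fibergeo}, Claim \ref{fiber-geos}); ``closedness of $\CAT^\kappa$ under pointed limits'' does not substitute for it.
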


\vspace{2mm}
\begin{thm} \label{thm:mainB} {\bf($\CBB^\kappa$)}\,\, 
Let $(B, f,F)$ be a WP-triple,
and assume $f$ is Lipschitz on bounded sets.
Then
$B\times_fF\in\CBB^\kappa$ if and only if the following conditions hold, where $Z=f^{-1}(0)$:

\begin{enumerate}
\item \label{mainB:base-f}
$B\in\CBB^\kappa$ and $f\in\conc^\kappa$.

\item \label{mainB:dag}
 Let \ $B^\dag (f)$\  be obtained by gluing two copies of
$B$ on $\ \cl(\partial B -Z)$,
and let  \ $f^\dagger:B^\dag\to[0,\infty)$\  be the tautological extension of $f$.  Then
$B^\dag(f)\in\CBB^\kappa$ and $f^\dag\in \conc^\kappa$.\vspace{1mm}

\item \label{mainB:fiber-f>0}   If $Z = \emptyset$, then 
$F\in\CBB^{\,\kappa_F}$ for

\vspace{-4mm}
\[\kappa_F=\kappa\cdot (\inf f)^2.\]

\vspace{1mm}
\item \label{mainB:fiber-f=0}
If $Z \ne \emptyset$, then  $F\in\CBB^{\,\kappa_F}$ for
\[\kappa_F = \sup\,\{(f\circ\alpha)^+(0)^2 : \alpha =\!\dist_Z\textrm{-realizer with footpoint } \alpha(0)\in Z,\, |\alpha^+(0)|=1\}.\
\]
 \end{enumerate}
\end{thm}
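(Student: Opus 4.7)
The plan is to prove the converse direction only, since the sufficiency of (1)--(4) was established in \cite{ab-wp}; throughout I assume $B\times_fF\in\CBB^\kappa$ and derive each of (1)--(4). The strategy mirrors that of Theorem~\ref{thm:mainA}, exploiting the advertised duality: wherever the $\CAT^\kappa$ argument uses two-point supports from above on $f\circ\gamma$, the $\CBB^\kappa$ argument supplies them from below, and the extremal $\min$ in (3) of Theorem~\ref{thm:mainA} becomes the extremal $\sup$ in (4) here. The four conditions split naturally into two easier ones, (1) and (3), treated via direct comparison in the warped product, and two subtler ones, (2) and (4), that require gluing/globalization and blow-up analysis respectively.

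For (1) and (3) the main tool is the fundamental identity $\lvert(\gamma(t),p),(\gamma(t),q)\rvert=f(\gamma(t))\cdot|p\,q|_F$ for any geodesic $\gamma$ in $B$ and any $p,q\in F$, together with the fact that the natural horizontal inclusion $b\mapsto(b,p_0)$ is length-preserving on curves. To get $B\in\CBB^\kappa$ I take three points lifted to a common fiber and apply point-side $\kappa$-comparison in $B\times_fF$; the horizontal upper bound $d_{B\times_fF}((b_i,p_0),(b_j,p_0))\le|b_ib_j|_B$ combined with the reverse inequality obtained by choosing $p_0$ in a carefully selected sequence (e.g.\ approximating the "thickest" fiber, or, equivalently, using that any intrinsic geodesic in $B$ can be realized as the horizontal projection of a minimizer in $B\times_fF$) identifies the distances, and the comparison descends. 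To get $f\in\conc^\kappa$ I take a unit-speed geodesic $\gamma$ in $B$ with endpoints $b_0,b_1$, pick $q\in F$ with $|p\,q|_F$ a small parameter, and apply $\CBB^\kappa$ comparison to the triangle with vertices $(b_0,p),(b_1,p),(\gamma(s),q)$: the three sidelengths involve $f(b_0),f(b_1),f(\gamma(s))$ through the identity above, and in the limit as $|pq|\to 0$ the $\kappa$-comparison reduces precisely to the two-point support inequality defining $\conc^\kappa$. For (3), each fiber $\{b\}\times F$ carries the rescaled metric $f(b)\cdot d_F$; applying $\CBB^\kappa$ comparison to triangles lying in such a fiber and taking $b$ along a sequence with $f(b)\to\inf f$ gives $F\in\CBB^{\kappa\cdot(\inf f)^2}$ after rescaling.

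For (4) I work at a footpoint $b_0\in Z$ and use the blow-up/tangent-cone picture. Near such a point $B\times_fF$ locally resembles a Euclidean-type cone over a join, and a $\dist_Z$-realizer $\alpha$ with $\alpha(0)=b_0$, $|\alpha^+(0)|=1$ supplies a direction whose infinitesimal warping factor is $\lambda:=(f\circ\alpha)^+(0)$. Looking at triangles with all three vertices in a fiber $\{\alpha(t)\}\times F$ for small $t$, dividing by $t$, and passing to the tangent space at $(b_0,p)$ converts the $\CBB^\kappa$ condition into a cone-curvature condition on $F$ with sharp constant $\lambda^2$; the supremum over admissible $\alpha$ gives precisely $\kappa_F$. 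Here the Gauss Lemma machinery developed in \cite{AB-gauss} for general subspaces is what makes the tangent-cone identification rigorous.

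The main obstacle, and the step I would attack last, is condition (2): the doubled base $B^\dag(f)$ glued along $\cl(\partial B-Z)$ must itself be $\CBB^\kappa$ with $f^\dag\in\conc^\kappa$. The plan is first to establish the assertion \emph{locally} near $\cl(\partial B-Z)$ by a reflection argument: in a neighborhood of a boundary point where $f>0$, the warped product can be isometrically extended by doubling over the boundary stratum, and transferring the local $\CBB^\kappa$ property of $B\times_fF$ across the double produces a local $\CBB^\kappa$ picture for $B^\dag(f)\times_{f^\dag}F$, from which one extracts local $\CBB^\kappa$ for $B^\dag(f)$ itself and concavity of $f^\dag$ on germs of geodesics crossing the gluing locus. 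Because $B^\dag(f)$ is generally not geodesically complete (in particular it may fail to be a length space away from $\cl(\partial B-Z)\cap Z$), the passage from this local statement to the global one requires Petrunin's globalization theorem for incomplete spaces \cite{globalization}; this is the technical ingredient that was unavailable when \cite{ab-wp} was written, and verifying its hypotheses for the doubled space is the delicate core of the proof.
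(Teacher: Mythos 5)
Your high-level plan -- prove only the necessity direction, split the four conditions, and handle (1),(3) by direct comparison while reserving globalization for (2) -- matches the paper's overall organization. But several of your specific routes diverge from the actual proof, and a few have genuine gaps.

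For condition (1), you propose recovering $B\in\CBB^\kappa$ by "choosing $p_0$ in a carefully selected sequence" to pin down horizontal distances; this is unnecessary. Proposition \ref{prop:wp-proj}(\ref{horiz-leaf}) already gives that the horizontal leaf $B\times\{\varphi_0\}$ is isometric to $B$ with its own metric, so the $(1+3)^\kappa$ condition descends immediately. More importantly, you omit an essential intermediate fact: $Z\subset\partial B$ (Theorem \ref{thm:mainB:base-f}(\ref{Z-in-bdry})). The paper proves this by induction on dimension using tangent cones, and it is a prerequisite even for \emph{defining} $B^\dag(f)$ and for the later arguments. Your route to $f\in\conc^\kappa$ via degenerating triangles $(b_0,p),(b_1,p),(\gamma(s),q)$ is also not what the paper does; the paper passes to the half-warped product $W=B\times_f[0,\theta_0]$, identifies $B\times\{0\}$ with (part of) $\partial W$, and invokes Theorem \ref{thm:wf-convexity}(\ref{d-bdry}) on $\sn^\kappa\circ\dist_{\partial W}$ together with Lemma \ref{lem:f-lim}. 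Your approach might work but would require careful justification that the triangle comparison actually degenerates to the two-point sinusoidal support.

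For condition (3), your plan "take $b$ with $f(b)\to\inf f$, compare triangles in the fiber, rescale" handles the case $\inf f>0$, but the case $Z=\emptyset$ with $\inf f=0$ is a separate and harder proposition (Proposition \ref{prop:B-fiber-inf=0}). When $\inf f = 0$ the fiber metric degenerates, and the paper must rescale the \emph{whole base}, extract a pointed Gromov--Hausdorff limit, establish uniform bounds $1-\epsilon < f_i < 1+\epsilon$ via quasigeodesic estimates (the double and the exponential bounds (\ref{eq:inf-0-lb})--(\ref{eq:inf-0-ub})), and show the limit is the Cartesian product $B_\infty\times F$. This blow-up argument is not a consequence of the $\inf f >0$ argument, and it also needs Lemma \ref{lem:Z-not-empty} to conclude $\kappa<0$ in that case.

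For condition (4), you cite the Gauss Lemma of \cite{AB-gauss} as "what makes the tangent-cone identification rigorous." That is misplaced: the Gauss equation is used only in the $\CAT^\kappa$ fiber argument (Section \ref{sec:fiber-cba}). The $\CBB^\kappa$ fiber curvature in Proposition \ref{prop:B-fiber-0} is proved by induction on $\dim B$, using Lemma \ref{lem:tan-cone}(\ref{diff}) to pass to $\Sigma_pB\times_{{\diff}_pf|\Sigma_pB}F\in\CBB^1$, together with the gradient/footpoint equivalence of Lemma \ref{lem:grad-foot} and lower semicontinuity of $|\grad f|$. No Gauss-equation machinery appears.

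For condition (2), your plan (local doubling near $\cl(\partial B - Z)$, then Petrunin's incomplete globalization) is broadly the paper's approach, and you are right that this is the delicate core. But you do not identify \emph{what} hypothesis of Petrunin's theorem needs verification. The space to which Theorem \ref{thm:anton} is applied is $B^\dag_0=(f^\dag)^{-1}(0,\infty)$, and what must be shown is that $B^\dag_0$ is a \emph{geodesic space} whose completion is $B^\dag$. This is exactly claim \ref{bdryZ} in the paper's proof: any geodesic of $B^\dag$ through a point of $(\Pi^\dag)^{-1}(Z)$ must lie entirely in $(f^\dag)^{-1}(0)$. Proving this requires a substantial inductive argument on $\dim B$ using the tangent-cone structure, the uniqueness of the maximum of $({\diff}_pf|\Sigma_pB)^\dag$, and a delicate angle estimate ((\ref{eq:sum>pi})). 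Also, your parenthetical that $B^\dag(f)$ "may fail to be a length space" is incorrect -- $B^\dag$ is complete by construction; it is $B^\dag_0$ whose geodesicity is at stake.
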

\vspace{2mm}

\begin{thm} \label{thm:grad}
\emph{(a)} In Theorem \ref{thm:mainA}\,(\ref{mainA:fiber-f=0}), we may substitute 
\[
\kappa_{\textrm{foot}} \,=\,
\liminf_{\epsilon \to 0}\ \{|\nabla_p(-f)|^2 : 
0<\dist_Z(p) \le \epsilon\}.
\]
\emph{(b)} In Theorem \ref{thm:mainB}\,(\ref{mainB:fiber-f=0}), we may substitute 
\[
\kappa_F \,=\,
\sup\,\{|{\grad_q}f\,|^2: \,q\in Z\}.\notag
\]
\end{thm}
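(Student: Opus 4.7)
The plan is to prove each equality by the two-sided inequality. The unifying tool is that sinusoidal $\kappa$-(con)convexity of $f\circ\alpha$ along a unit-speed geodesic $\alpha$ of length $\ell$ controls the initial and terminal one-sided derivatives by the secant slope, up to a correction of order $\kappa\ell^2$.

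\emph{Part (b).} The bound $\sup_\alpha(f\circ\alpha)^+(0)^2 \le \sup_{q\in Z}|\nabla_q f|^2$ is immediate, since each $(f\circ\alpha)^+(0)=(d_{\alpha(0)}f)(\alpha^+(0))$ is a directional derivative of $f$ at $\alpha(0)\in Z$. For the reverse, fix $q\in Z$ and a unit direction $v$ at $q$ with $(d_q f)(v)$ close to $|\nabla_q f|$. Using extendability of geodesics in a CBB space, take a geodesic $\gamma$ from $q$ in direction $v$, set $p_\epsilon:=\gamma(\epsilon)$, pick a footpoint $q_\epsilon$ of $p_\epsilon$ in $Z$, and let $\alpha_\epsilon$ be a $\dist_Z$-realizer from $q_\epsilon$ to $p_\epsilon$. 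Sinusoidal $\kappa$-concavity of $f\circ\alpha_\epsilon$ (from $f\in\conc^\kappa$) gives
\[
(f\circ\alpha_\epsilon)^+(0) \;\ge\; \frac{f(p_\epsilon)}{|\alpha_\epsilon|}\bigl(1+O(\kappa\epsilon^2)\bigr) \;\ge\; \frac{f(p_\epsilon)}{\epsilon}\bigl(1+O(\kappa\epsilon^2)\bigr),
\]
using $f(q_\epsilon)=0$ and $|\alpha_\epsilon|=\dist_Z(p_\epsilon)\le\epsilon$. Since $f(p_\epsilon)/\epsilon\to(d_q f)(v)$, the sup over realizers is at least $(d_q f)(v)$ and hence at least $|\nabla_q f|$; taking $\sup_{q\in Z}$ completes the proof.

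\emph{Part (a).} The inequality $\kappa_{\textrm{foot}} \le \liminf_{\epsilon\to0}\{|\nabla_p(-f)|^2:0<\dist_Z(p)\le\epsilon\}$ is analogous. For $p$ with $0<\dist_Z(p)\le\epsilon$, choose a footpoint $q_p$ and a $\dist_Z$-realizer $\alpha_p$ from $q_p$ to $p$. Sinusoidal $\kappa$-convexity of $f\circ\alpha_p$ yields $(f\circ\alpha_p)^+(0) \le (f\circ\alpha_p)^-(|pq_p|) + O(\kappa|pq_p|^2)$; the terminal left derivative equals the directional derivative of $-f$ at $p$ along the reverse of $\alpha_p$, hence is bounded by $|\nabla_p(-f)|$. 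Taking infima gives the inequality.

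The reverse inequality is the main obstacle: one must produce, for each realizer $\alpha$ approaching the infimum defining $\kappa_{\textrm{foot}}$, a sequence $p_n\to Z$ with $|\nabla_{p_n}(-f)|^2\to(f\circ\alpha)^+(0)^2$. Choosing $p_n:=\alpha(t_n)$ yields the lower bound by convexity, but the matching upper bound $\limsup_{t\to 0^+}|\nabla_{\alpha(t)}(-f)|\le(f\circ\alpha)^+(0)$ can fail for arbitrary realizers (simple $\R^2$ examples confirm this) and must be established for \emph{extremal} ones. I expect this to follow from (i) local convexity of $Z$, forced by $f\in\conv^\kappa$ and $f\ge 0$ (any geodesic joining two points of $Z$ must lie in $Z$, so the nearest-point projection to $Z$ is well-defined on a tubular neighborhood), combined with (ii) the Gauss Lemma for general subspaces of \cite{AB-gauss} cited in the introduction, which controls the behavior of $f$ along geodesics transverse to $Z$. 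Together these identify, in the $t\to 0$ limit along extremal realizers, the gradient direction of $-f$ at $\alpha(t)$ with the reverse of $\alpha$, yielding the required matching of magnitudes.
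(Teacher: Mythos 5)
Your part (b) is correct and in fact avoids the machinery the paper uses. The paper (Lemma \ref{lem:grad-foot}) works through the hemispherical cone structure of $\Sigma_{p^\dag}(B^\dag)$ to show that the \emph{direction} of any $\dist_Z$-realizer at a footpoint agrees with $|\grad_p f|^{-1}\grad_p f$, then finishes with density of footpoints in $\inte Z$ and lower semicontinuity of $|\grad f|$. You instead walk out from $q\in Z$ along a geodesic direction $v$ approximating $u_{\max}$, take the realizer to $p_\epsilon=\gamma(\epsilon)$, and use $\conc^\kappa$ to bound its initial derivative below by the secant slope. This is shorter and does not need the $B^\dag$ doubling hypothesis; what you lose is the structural identification of realizer directions with gradient directions, which the paper reuses (e.g.\ in the induction in Proposition \ref{prop:B-fiber-0} via equation (\ref{h-grad})). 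Do phrase the ``extendability'' step as density of geodesic directions in $\Sigma_qB$, since $q$ is a boundary point and geodesics do not extend through it.

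Part (a) has a genuine gap, and you correctly identify which inequality is the hard one, but your proposed route is not the one that works. You are trying to show $\limsup_{t\to0^+}|\nabla_{\alpha(t)}(-f)|\le(f\circ\alpha)^+(0)$ for ``extremal'' realizers and hope that local convexity of $Z$ plus the Gauss Lemma will deliver it; the paper never establishes any such pointwise statement about the gradient field along $\alpha$, and it is unclear it is true. What is actually needed (paper's Lemma \ref{lem:cat-grad-foot}, Claim 1) is the cruder bound: for every $\dist_Z$-realizer $\alpha$ with footpoint in $Z$ and every $\epsilon>0$, $(f\circ\alpha)^+(0)\ge C_\epsilon:=\inf\{|\nabla_q(-f)|:0<\dist_Z(q)\le\epsilon\}$. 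The key tool is the \emph{downward gradient flow}, which your sketch never invokes. One lets $\eta_t$ be the unit-speed downward gradient curve of $f$ starting at $\alpha(t)$; since $(f\circ\eta_t)^+=-|\nabla_{\eta_t}(-f)|\le-C_\epsilon$ while $\eta_t$ stays in the $\epsilon$-tube, and $f(\alpha(t))\to0$, for small $t$ the curve $\eta_t$ reaches $Z$ without leaving the tube. Then $f(\alpha(t))=\int_0^{s(t)}|\nabla_{\eta_t(u)}(-f)|\,du\ge s(t)\cdot C_\epsilon\ge t\cdot C_\epsilon$, the last inequality because the gradient curve from $\alpha(t)$ to $Z$ has length at least $\dist_Z(\alpha(t))=t$. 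Dividing by $t$ and letting $t\to0$ gives $(f\circ\alpha)^+(0)\ge C_\epsilon$. Note the convexity of $f\circ\alpha$ gives $(f\circ\alpha)^+(0)\le f(\alpha(\ell))/\ell$, i.e.\ the secant-slope trick you used in (b) points the wrong way in the convex case, so an integral argument (or something equivalent) is unavoidable here.
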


\vspace{1mm}

\begin{rems}\label{rem:cone}
(a) In Theorem \ref{thm:mainA}, condition (\ref{mainA:base-f})  implies  $Z$ is $\varpi^\kappa$-convex. In Theorem \ref{thm:mainB}, condition (\ref{mainB:base-f}) implies  $Z\subset \partial B$.

\vspace{1mm}

\noindent(b) In Theorem \ref{thm:mainB} (\ref{mainB:fiber-f>0}) we may substitute
\[\kappa_F \ge \kappa \cdot f^2.\]
This is because when $Z = \emptyset$, conditions (\ref{mainA:base-f}) and (\ref{mainB:dag})
imply $\kappa \le 0$ (see proof of Lemma \ref{lem:Z-not-empty}).

\vspace{1mm}

\noindent(c)
Theorems \ref{thm:mainA}(2) and \ref{thm:mainB}(3) are asymptotic versions of a basic fact for warped products: if  $f$ achieves a positive minimum at $p_0$, then shorter joins between points of \,$\{p_0\} \times F$ 
cannot be achieved by leaving \,$\{p_0\} \times F$.

\vspace{1mm}

\noindent (d) \label{dir-space-curv}
 For the simple example  
 $$\cone F=\R_{\ge 0}\times_{\id}F,$$
Theorems \ref{thm:mainA} and \ref{thm:mainB} reduce to the well-known statements: $\cone F\in\CAT^0$  if and only if $ F\in\CAT^1$;  $\cone F\in\CBB^0$ if and only if $ F\in\CBB^1$. (See \cite[Theorem 4.7.1]{bbi}.) If we allow $F$ to be a disjoint union of intrinsic spaces $F_\alpha$, then in the first statement substitute: each component $F_\alpha\in \CAT^1$.  In the second statement, substitute: $F\in\CBB^1$ or $F= 2$ points. Here, $F= 2$ points is the only additional possibility since otherwise geodesic bifurcations occur at the vertex.
\end{rems}

\section{Background and conventions}
\label{sec:background}

Here we summarize our tools:

\subsection{Curvature bounds}\label{subsec:curv-bounds}
Definitions and basic theorems are discussed in \cite{bbi} and \cite[\emph{Definitions of CBB/CBA}]{akp-book};  and in 
 \cite{bgp}, \cite{pl}, \cite{shio} for $\CBB^\kappa$, and  \cite{bh} for $\CAT^\kappa$. 

Given a metric space $X$, we are going to use the following definitions.

\begin{itemize}
\item
A \emph{geodesic} $\gamma$ joining $x^1,x^2\in X$ is a constant-speed curve of length $|x^1x^2|$. 
We may also denote $\gamma$ by $[x^1\,x^2]$.  A \emph{pregeodesic} is a monotonically reparametrized geodesic. A geodesic (pregeodesic) is said to be unique if it is determined by its endpoints up to reparametrization. 
 $X$ 
is \emph{geodesic} (\emph{intrinsic}) if any $x^1,x^2\in X$  are joined by a geodesic (respectively, by curves of length arbitrarily close to $|x^1x^2|$). $X$ is  \emph{r-geodesic} (\emph{r-intrinsic}) if this condition is applied only when  $|x^1\,x^2|<r$.

\vspace{1mm}

\item
A quadruple of points $x^1,x^2,x^3,x^4$ in a metric space satisfies 
	\emph{$(1+3)$-point $\kappa$-comparison}, briefly \emph{$(1+3)^\K$},
if 
$$
\tilde\angle^{\kappa} \bigl[{x^1}\!\!\!<^{\,x^2}_{\,x^3}\bigr] 
+\tilde\angle^{\kappa} \bigl[{x^1}\!\!\!<^{\,x^3}_{\,x^4}\bigr]
+\tilde\angle^{\kappa} \bigl[{x^1}\!\!\!<^{\,x^4}_{\,x^2}\bigr]\le 2\cdot\pi, 
$$
or at least one of the three model angles $\tilde\angle^{\kappa} \bigl[{x^1}\!\!\!<^{x^i}_{x^j}\bigr]$ is undefined \cite{bgp}.

\vspace{1mm}

\item
A quadruple of points $x^1,x^2,x^3,x^4$ in a metric space 
satisfies
\emph{$(2+2)$-point $\kappa$-comparison}, briefly \emph{$(2+2)^\K$}, if
\begin{enumerate}
\item[(a)]
either $\tilde\angle^{\kappa} \bigl[{x^1}\!\!\!<^{\,x^3}_{\,x^4}\bigr] 
\le 
\tilde\angle^{\kappa} \bigl[{x^1}\!\!\!<^{\,x^3}_{\,x^2}\bigr]
+\tilde\angle^{\kappa} \bigl[{x^1}\!\!\!<^{\,x^2}_{\,x^4}\bigr]$, 
\item[(b)]
or $\tilde\angle^{\kappa} \bigl[{x^2}\!\!\!<^{\,x^3}_{\,x^4}\bigr]
\le \tilde\angle^{\kappa} \bigl[{x^2}\!\!\!<^{\,x^3}_{\,x^1}\bigr] 
+ \tilde\angle^{\kappa} \bigl[{x^2}\!\!\!<^{\,x^1}_{\,x^4}\bigr]$, 
\end{enumerate}
or at least one of the six model angles 
$\tilde\angle^{\kappa} \bigl[{x^i}\!\!\!<^{\,x^j}_{\,x^k}\bigr]$
is undefined \cite{akp-book}.
\end{itemize}

\vspace{1mm}
The following definitions of $\CBB^\kappa$ and $\CAT^\kappa$ are equivalent to point-side definitions, but 
 depend on distances only, not on existence of geodesics.  They allow us to give some  substantially simpler proofs.

\begin{defn}
 Let $X$ be an intrinsic space.  
 \begin{itemize}
 \item
\emph{$X\in\CBB^\kappa$} means $X$ is a
complete intrinsic space  in which every quadruple satisfies \,$(1+3)^\K$. In this paper, 
we  further assume  \emph{$X$ has finite dimension}.  
(In particular, $X$ is proper, hence a geodesic space, and boundary $\partial X$ is defined.) We also use the convention (for $\kappa >0$)
 that $X$ is not  isometric to
a closed interval of length $> \varpi^{\kappa}$, or  a circle of length $> 2\cdot\varpi^{\kappa}$.  Under this convention, if  $X\in\CBB^\kappa$, then 
$X$ has diameter $\le\varpi^\kappa$ \cite{bgp}.
 \item 
 $X$ has \emph{curvature $\ge\kappa$}, written \emph{$\curv X\ge \kappa$},  
if any point $p\in X$
has a neighborhood $\Omega_p$ such that all quadruples lying in $\Omega_p$ satisfy 
$(1+3)^\K$;    
\item
\cite{akp-book}
\emph{$X\in\CAT^\kappa$} means $X$ is a
complete intrinsic  space  in which every quadruple satisfies \,$(2+2)^\kappa$. It follows that $X$ is $\varpi^{\kappa}$-geodesic.
\item
$X$ has \emph{curvature $\le\kappa$}, written \emph{$\curv X\le\kappa$},  
if any point $p\in X$
has a neighborhood $\Omega_p$ such that all quadruples lying in $\Omega_p$ satisfy  $(2+2)^\kappa$.
\end{itemize}
\end{defn}

\subsection{Globalization}\label{subsec:globalization} 

The following theorem \cite{globalization} 
extends the Burago-Gromov-Perelman globalization theorem \cite{bgp} to incomplete spaces. 
Theorem \ref{thm:anton}   includes long intervals and circles in $\CBB^\kappa$, rather than using  our convention which excludes them.

\begin{thm}[Petrunin's incomplete-globalization theorem  \cite{globalization}
]\label{thm:anton} 
Let $X$ be a geodesic space and $ \hat X$ be its completion.
Suppose $\curv X\ge \kappa$.
Then $ \hat X\in\CBB^\kappa$.
\end{thm}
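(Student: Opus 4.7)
The plan is to split the proof into two stages: first establish the global $(1+3)^\kappa$ comparison on $X$ itself, and then transfer it to the completion $\hat X$ by density and continuity. Note that $\hat X$ is complete by construction and intrinsic because $X$ is intrinsic (the completion of an intrinsic space is intrinsic), so the only substantive content is the quadruple comparison.

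For the first stage, I would follow the skeleton of the Burago-Gromov-Perelman globalization argument, which upgrades local $(1+3)^\kappa$ to global $(1+3)^\kappa$ on a complete geodesic space. Its central device is a chain/propagation scheme: given a configuration $\{p;x^1,x^2,x^3\}$, subdivide each geodesic $[p\,x^i]$ into pieces short enough for local comparison to apply, then combine the resulting angle inequalities using monotonicity of model angles along geodesics. The critical technical step is a propagation lemma of the form ``$(1+3)^\kappa$ at a hinge with arm of length $t$ implies $(1+3)^\kappa$ for arm length $t+\epsilon$,'' which in BGP is proved by selecting a witness point slightly past the far endpoint of the arm.

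The main obstacle, and the whole point of this theorem, is that in an incomplete space such witness points may simply fail to exist --- the geodesic runs off toward the metric boundary. To get around this I would try to rebuild the propagation lemma using only auxiliary points lying strictly inside $X$. A promising route is to replace forward extension by short transverse hinges sprouting from interior points, and to exploit the fact that the $(1+3)^\kappa$ inequality is a \emph{closed} condition, so it suffices to verify it for a dense set of configurations. Since geodesics in $X$ traverse a positive-measure set of arm lengths that remain safely interior, one should be able to propagate along this dense set and then conclude the inequality by taking a limit. Making this precise --- in particular quantitatively controlling the ``leakage'' toward the metric boundary uniformly across the chain --- is the delicate core of the theorem and is the content of \cite{globalization}.

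Once the global $(1+3)^\kappa$ comparison is established on $X$, the transfer to $\hat X$ is routine. Given a quadruple $\{\hat p;\hat x^1,\hat x^2,\hat x^3\}$ in $\hat X$, approximate each vertex by a sequence from $X$. Each approximating quadruple satisfies $(1+3)^\kappa$, and the three model angles at the first vertex depend continuously on the pairwise distances wherever those angles are defined. If some model angle becomes undefined in the limit (perimeter exceeding $\varpi^\kappa$), the $(1+3)^\kappa$ clause is vacuously satisfied; otherwise the angle-sum inequality passes to the limit by continuity. Under the paper's finite-dimension convention for $\CBB^\kappa$, one additionally observes that the completion inherits finite dimension from $X$ by a standard argument on Hausdorff dimension.
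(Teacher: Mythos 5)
This theorem is not proved in the paper at all: it is imported as a black box from Petrunin's paper \cite{globalization}, as the sentence immediately preceding the statement makes explicit. There is therefore no internal proof to compare your attempt against; the authors state the result and use it (in Theorem \ref{thm:mainB-base-wf} and Remark \ref{rem:ab}) without reproducing the argument.

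Taking your attempt on its own terms: the two-stage decomposition (globalize $(1+3)^\kappa$ on the incomplete $X$, then pass to $\hat X$ by density) is the right shape, and the second stage is essentially correct as written, since the paper's $(1+3)^\kappa$-based definition of $\CBB^\kappa$ depends only on distances and is a closed condition under pointwise convergence of quadruples. Two small corrections there. First, for $\kappa>0$ you also need to establish a perimeter/diameter bound in $\hat X$ (quadruples whose model angles are all undefined satisfy the comparison vacuously, but in the end $\CBB^\kappa$ forces $\diam \hat X\le\varpi^\kappa$); this is part of what the globalization on $X$ must deliver and your transfer step should record it. Second, your closing remark about the completion inheriting finite dimension is not needed and not quite on point: the paper explicitly says, in the paragraph before the theorem, that Theorem \ref{thm:anton} uses a broader convention for $\CBB^\kappa$ that does not impose their finite-dimension or length-exclusion conventions.

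As for your first stage, it is an outline, not a proof, and you say so yourself: the paragraph ends by conceding that making the propagation lemma work without forward extension ``is the delicate core of the theorem and is the content of \cite{globalization}.'' That concession is honest, but it means the only genuinely nontrivial part of the theorem is left to the cited reference -- which is precisely what the paper itself does. So your attempt and the paper end up in the same place: neither contains a self-contained proof of the hard step, and both ultimately lean on Petrunin.
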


\subsection{Definitions}
Let $X$ be a metric space.
The \emph{speed} of a curve  $\alpha:J\to X$
at $t_0\in J$, where $J$ is an interval, is defined as
 $$(\speed\alpha)(t_0)= \lim_{t\to t_0,\,t\in J}(\,|\alpha(t)\,\alpha(t_0)|\,/\,|t-t_0|\,).$$ 
 If $\alpha$ is Lipschitz, then  \,$\speed\alpha$\, exists at almost all $t\in J$, and \,$\length \alpha$\,  is  finite and given by Lebesgue integral of  speed.

A subset $S\subset X$ will be called  \emph{convex in $X$} if all $x^1,x^2\in S$ are joined by  geodesics of $X$, and all such  geodesics lie in $S$. If this condition holds when $|x^1\,x^2|<r$, then $S$ is said to be \emph{$r$-convex}.

For $S\subset X$, we denote distance from $S$ by $\dist_S$. Set
\begin{equation}
\mathcal{B}(S,r)= \{x\in X: \dist_S(x) < r\}.
\end{equation}  
If $S=\{p\}$, we write $\mathcal{B}(p,r)$ for the open ball of radius $r$ about $p$, and 
$\overline{\mathcal{B}}(p,r)$ for the closure of $\mathcal{B}(p,r)$.

\subsection{Tangent spaces and differentials}
Suppose 
$X\in\CBB^\K$ or $X\in\CAT^\K$.  
Recall that if $\gamma^1$ and $\gamma^2$ are geodesics from $p$, and $x^i$ lies on $\gamma^i$, then $\tilde\angle^\kappa  \bigl[p\!\!\!<^{ x^1}_{x^2}\bigr]$ is a monotone function of  
$(|p \,x^1|, |p\, x^2|)$.  
Then the angle at $p$ between $\gamma^1$ and $\gamma^2$ is defined as 
\begin{equation}\label{eq:angle}
 \lim\,_{|p \,x^1|\to 0, \ |p \,x^2|\to 0}\ \ 
\tilde\angle^\kappa  \bigl[p\!\!<^{ x^1}_{ x^2}\bigr].
\end{equation}

Consider the set $\Gamma_pX$ of geodesics $\gamma$ with $\gamma(0)=p$.  Set $\gamma_1\sim\gamma_2$  if 
$\gamma_1$ and $\gamma_2$ 
are  non-constant and the angle between them is $0$. A metric on the quotient space $(\Gamma_pX/\sim)$ is given by  the angle between representative geodesics $\gamma$.  We denote this metric space by $\Sigma'_pX$, the \emph{space of geodesic directions}. 
The \emph{space of  directions} $\Sigma_pX$ is the completion of $\Sigma'_pX$.

The  \emph{tangent space}, or \emph{space of tangent vectors},  $T_pX$, is the linear cone over $\Sigma_pX$:
$$T_pX=\cone(\Sigma_pX).$$
If $\gamma$ is a geodesic  with $\gamma(0)=p$ and speed $c>0$, and $u$ is the direction at $p$ represented by $\gamma$, the \emph{right derivative $\gamma^+(0)$ of $\gamma$ at $0$} is the tangent ``vector'' $(c,u)\in T_pX$, which we write as $v=c\cdot u$. We denote the vertex of the cone $T_pX$ by $o_p$.

Let $f:X\to \R$ be a locally Lipschitz function such that $(f\circ\gamma)^+(0)$ exists for every geodesic $\gamma$ with $\gamma(0)=p$. 
Then the  \emph{differential of $f$ at $p$} is a uniquely determined, linearly homogeneous, Lipschitz map 
$${\diff}_pf: 
\tang_pX\to \R$$ 
such that  $({\diff}_pf)(x)=(f\circ\gamma)^+(0)$ when $\gamma$ is a geodesic with $\gamma^+(0)=x$.  

In this paper, convergence of spaces always refers to Gromov-Hausdorff convergence. We need:

\begin{lem}[\cite{bgp, Pm1} ]
\label{lem:blowup}
Suppose $X\in\CBB^\K$. 
Then
$$
(\,\tang_pX,\,o_p\,)=\lim_{\lambda\to\infty}(\lambda X,\,p\,),$$

\vspace{-4mm}

$$
(\,\tang_pX,\,\partial(\tang_pX),o_p\,)=\lim_{\lambda\to\infty}(\lambda X,\,\partial (\lambda X),p\,) \text{\ \  if \  }p\in \partial B.
$$
\end{lem}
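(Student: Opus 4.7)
The plan is to construct explicit $\epsilon_\lambda$-approximations (with $\epsilon_\lambda\to 0$) between the $R$-balls about $p$ in $\lambda X$ and about $o_p$ in $\tang_pX$, for each fixed $R>0$, and then refine the construction so that boundary is carried to boundary under the same maps. The essential tool is the monotonicity of the comparison angle $\tilde\angle^\kappa\bigl[p\!<^{x^1}_{x^2}\bigr]$ in the sidelengths $(|p\,x^1|,|p\,x^2|)$, which in any $\CBB^\kappa$ space is a direct consequence of $(1+3)^\kappa$-comparison. This monotonicity is precisely what makes the limit \eqref{eq:angle} exist, and hence what makes $\Sigma_pX$ (and its cone $\tang_pX$) a well-defined metric space in the first place.

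For the unmarked convergence, I would pick a dense sequence of unit geodesic directions $u_i=[\gamma_i]\in\Sigma'_pX$ and define a partial map by sending $c\cdot u_i\in\tang_pX$ to $\gamma_i(c/\lambda)$, viewed as a point of $\lambda X$. The model law of cosines in the surface of constant curvature $\kappa$ expresses $|\gamma_i(c_i/\lambda)\,\gamma_j(c_j/\lambda)|$ as a function of $(c_i/\lambda,\,c_j/\lambda,\,\tilde\angle^\kappa\bigl[p\!<^{\gamma_i(c_i/\lambda)}_{\gamma_j(c_j/\lambda)}\bigr])$. Multiplying by $\lambda$, sending $\lambda\to\infty$, and invoking both the monotone limit $\tilde\angle^\kappa\to\angle(u_i,u_j)$ and the flattening of the model at small scale, we recover the Euclidean cone distance $\sqrt{c_i^2+c_j^2-2c_ic_j\cos\angle(u_i,u_j)}$ that defines $\tang_pX$. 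Approximate surjectivity is automatic since $X$ is a geodesic space: every $x\in\mathcal{B}(p,R/\lambda)$ lies on some geodesic from $p$ and is hit on the nose by the analogous construction.

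For the marked version including the boundary, I would invoke Perelman's inductive definition $p\in\partial X \iff \partial\Sigma_pX\ne\emptyset$, together with the identity $\partial(\tang_pX)=\cone(\partial\Sigma_pX)$. The task then becomes to verify that the approximation above sends boundary to within $\epsilon_\lambda$ of boundary. If $q_n\in\partial X$ with $\lambda_n|p\,q_n|$ bounded, a subsequential limit of the directions $[p\,q_n]$ belongs to $\partial\Sigma_pX$ by the inductive application of the lemma one dimension lower; conversely, each element of $\partial\Sigma_pX$ is a limit of geodesic directions toward nearby boundary points of $X$. The main obstacle is this second half. The unmarked convergence is the classical consequence of angle monotonicity and the model law of cosines, but the boundary version is not formal: it rests on the dimension-dropping structure of Perelman's boundary and on stability theorems for $\CBB^\kappa$-spaces, which is the content one imports from \cite{bgp, Pm1}.
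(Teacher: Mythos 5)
The paper does not prove this lemma; it is stated as a background result with citations to \cite{bgp, Pm1}, so there is no in-paper proof to compare against — only an assessment of your sketch on its own terms.

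Your treatment of the unmarked convergence is essentially the standard argument and is correct, provided you use the right direction of monotonicity for surjectivity: since $\tilde\angle^\kappa$ is non-increasing as the two points move away from $p$ along their geodesics, for any $x$ near $p$ with $u=\uparrow_{[p\,x]}$ and any $u_i$ with $\angle(u,u_i)<\epsilon$ one has $\tilde\angle^\kappa\bigl[p\!<^{\gamma_i(|p\,x|)}_{\,x}\bigr]\le\epsilon$, and then the model law of cosines (monotone in the included angle) bounds $\lambda\,|\gamma_i(|p\,x|)\,x|$ by roughly $\epsilon\cdot\lambda|p\,x|$. That is exactly what makes your map an $\epsilon_\lambda$-approximation after a diagonal argument over finer direction nets.

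The marked statement is where the substance lies, and here the phrase ``by the inductive application of the lemma one dimension lower'' points at the wrong mechanism. The lemma applied to $\Sigma_pX$ concerns blow-ups of $\Sigma_pX$ at its own points; it does not by itself tell you that limits of the directions $\uparrow_{[p\,q_n]}$, for $q_n\in\partial X$ approaching $p$, land in $\partial\Sigma_pX$, nor conversely that every element of $\partial\Sigma_pX$ is approximated by such directions. Both implications are genuine theorems — they are instances of Perelman's boundary-stability result (boundary is stable under noncollapsing Gromov--Hausdorff convergence with a uniform lower curvature bound), developed via his stability/fibration machinery, not obtained by a self-contained induction on the blow-up statement itself. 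You are right to flag this as the obstruction and to defer to \cite{bgp, Pm1}; just be aware that what must be imported is boundary stability, and it does not reduce to a lower-dimensional instance of the very lemma you are proving.
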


\subsection{Convex functions and gradient vectors}\label{ssec:functions}
Suppose a continuous 
function $f$  on a metric space $X$ is sinusoidally $\kappa$-concave or $\kappa$-convex, i.e. $f\in\conc^\kappa$ or $f\in\conv^\kappa$.
Then  $f$ is   \emph{semiconcave} (\emph{semiconvex}\,), i.e. locally there is 
a constant generalized upper (\,lower\,) bound on $f''$ along unitspeed geodesics $\gamma$.  
Equivalently, $(f\circ\gamma)(t)-\lambda\cdot t^2$ is concave for some $\lambda\in\R$.
The restriction of a semiconcave function $f$ to a geodesic $\gamma$ has all the regularity properties
of a convex function: left and right derivatives exist at every point, and the second derivative exists almost everywhere.  

\begin{thm}\cite{ lytchak_open,petrunin_survey}
\label{thm:gradient}
Let $\x\in\CBB^\K$ or $\x\in\CAT^\K$, and 
$f:X \to \R_{\ge 0}$ be a locally Lipschitz semiconcave function. Then:
 \begin{itemize}
 \item[(i)]
${\diff}_pf$ exists  and is  concave.

\item[(ii)] 
The \emph{gradient} ${\grad}_pf\in T_pX$ exists, where ${\grad}_pf=o_p$ if ${\diff}_pf\le 0$, and otherwise
\[\label{eq;grad}
{\grad}_pf= ({\diff}_pf)(u_{\text{max}})\cdot u_{\text{max}}
\notag\]
for the unique $u_{\text{max}}\in\Sigma_p\x$ at which  $\,({\diff}_pf)\,|\,\Sigma_pX$ takes its maximum.

\item[(iii)]Maximal 
 \emph{gradient curves}, whose right tangents are everywhere equal to the gradient vector, exist and are unique.  
\end{itemize}
\end{thm}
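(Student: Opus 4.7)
The plan is to prove (i), (ii), and (iii) sequentially, with the rescaling Lemma \ref{lem:blowup} carrying the main technical load. For (i), given a geodesic $\gamma$ with $\gamma^+(0) = v = c \cdot u$, semiconcavity of $f$ makes $(f \circ \gamma)(t) - a t^2$ concave for some $a$, so the one-sided derivative $(f \circ \gamma)^+(0)$ exists; declare $({\diff}_p f)(v) := (f \circ \gamma)^+(0)$. The Lipschitz constant $L$ of $f$, via the inequality $|f(\gamma_1(t)) - f(\gamma_2(t))| \le L \cdot |\gamma_1(t) \gamma_2(t)| = o(t)$ whenever $\gamma_1 \sim \gamma_2$, makes ${\diff}_p f$ well-defined on $\Sigma'_p X$; it extends by continuity to $\Sigma_p X$ and conically to $T_p X$ as an $L$-Lipschitz, positively homogeneous function.

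Concavity of ${\diff}_p f$ on $T_p X$ is the crucial technical step. Consider the rescaled pointed spaces $(\lambda X, p)$ and the rescaled-and-centered functions $f_\lambda(x) := \lambda \cdot (f(x) - f(p))$. A generalized semiconcavity bound $(f \circ \gamma)'' \le C$ rescales to $(f_\lambda \circ \gamma_\lambda)'' \le C/\lambda$, which tends to $0$. By Lemma \ref{lem:blowup}, $(\lambda X, p) \to (T_p X, o_p)$, and along a subsequence the $L$-Lipschitz $f_\lambda$ converge to a concave $L$-Lipschitz function $\phi$ on $T_p X$. One checks that $\phi$ agrees with ${\diff}_p f$ on geodesic directions (the rescaled image of $\gamma(t/\lambda)$ is the ray $t \mapsto t \cdot u$, along which $\phi$ equals $\lim_\lambda \lambda \cdot (f(\gamma(t/\lambda)) - f(p)) = t \cdot (f \circ \gamma)^+(0)$), hence by density and continuity everywhere.

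For (ii), the restriction $({\diff}_p f)|_{\Sigma_p X}$ is continuous on a compact space of diameter $\le \pi$. If its maximum is nonpositive, set ${\grad}_p f := o_p$; otherwise a positive maximum $M$ is attained. Uniqueness of the maximizer $u_{\text{max}}$ follows from the concavity just established: if $u_1 \ne u_2$ in $\Sigma_p X$ both achieved $M$, the midpoint $m$ of $u_1, u_2$ in $T_p X$ would satisfy $|m| < 1$ strictly, and concavity plus positive homogeneity would give $({\diff}_p f)(m/|m|) = ({\diff}_p f)(m)/|m| \ge M/|m| > M$, a contradiction. For (iii), existence is obtained by Euler's broken-line method with step $\tau \to 0$: the uniform bound $|{\grad} f| \le L$ makes the polygonal approximations equi-Lipschitz, Arzelà–Ascoli extracts a limit curve $\alpha$, and one verifies $\alpha^+(t) = {\grad}_{\alpha(t)} f$ at each $t$ by passing to the limit in the first variation formula. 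Maximality comes from Zorn's lemma, and uniqueness from the Gronwall estimate $\bigl(|\alpha(t) \beta(t)|^2\bigr)^+ \le C \cdot |\alpha(t) \beta(t)|^2$ obtained by applying first variation to both curves and using the maximality of the gradient among all tangent directions.

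The main obstacle is the concavity step in (i): upgrading geodesicwise semiconcavity on $X$ to genuine concavity on the whole tangent cone $T_p X$. The blow-up argument must carefully handle compactness and convergence of semiconcave functions under Gromov–Hausdorff limits, ensuring the limit is well-defined independent of subsequence, continuous on $T_p X$, and genuinely coincides with the candidate differential rather than some other concave function sharing the same geodesic derivatives. Since (ii) and (iii) both depend on the concave-homogeneous structure of ${\diff}_p f$ produced here, any delicate issue at this stage propagates through the remaining steps.
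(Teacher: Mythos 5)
The paper does not actually give a proof of this theorem: it is a citation to Lytchak \cite{lytchak_open} and Petrunin \cite{petrunin_survey}, and the paper's brief ``proof'' only records where the result lives in the literature and, importantly, flags that the $\CAT^\K$ case needs an additional ingredient --- a Helly-type theorem (see \cite{lang-schroeder}) --- to get existence of gradient vectors and gradient curves. Your proposal attempts a genuine proof, which is fine in spirit, but it has a real gap precisely where the paper's remark warns you to be careful.

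The central issue is that your argument leans on Lemma \ref{lem:blowup} and on compactness arguments that are only valid on the $\CBB^\K$ side. Lemma \ref{lem:blowup} is stated (and is true) for $X\in\CBB^\K$; for $\CAT^\K$ spaces the rescaled pointed spaces $(\lambda X,p)$ need not Gromov--Hausdorff converge at all, because $\CAT^\K$ spaces are not assumed to be proper or locally compact, and even when a blowup exists it need not coincide with $T_pX=\cone(\Sigma_pX)$. So the concavity step in (i) is not established in the $\CAT^\K$ case by your blow-up argument. The same non-compactness defeats two later steps: you invoke compactness of $\Sigma_pX$ to get an attained maximum of $\diff_pf|\Sigma_pX$ in (ii), and Arzel\`a--Ascoli to extract a limit of Euler polygons in (iii); neither of these is available for a general $\CAT^\K$ space. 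This is exactly the gap that the Lang--Schroeder Helly-type theorem is used to close: one produces $u_{\text{max}}$ and gradient curves by a nested-intersection / Kirszbraun-type argument on closed convex bounded sets (which have the finite-intersection property in complete $\CAT^\K$) rather than by sequential compactness. For the $\CBB^\K$ case your outline is essentially the standard one from \cite{PP} and \cite{petrunin_survey} and is sound given the paper's finite-dimensionality convention (which forces properness), but as written the proposal does not cover the $\CAT^\K$ half of the statement.
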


\begin{proof}
Gradient curves of semiconcave functions were introduced in \cite{PP} (for $\curv\ge\K$), and their properties developed by Lytchak \cite{lytchak_open} (for both $\curv\ge\K$ and $\curv\le\K$) and Petrunin \cite{petrunin_survey}. In the $\CAT^\K$ case, existence of the gradient vectors and gradient curves as defined here follows from \cite{lytchak_open}  by invoking a Helly-type theorem (see \cite{lang-schroeder}). 
\end{proof}

\begin{rem}
When $X\in\CAT^\K$, we are going to apply Theorem \ref{thm:gradient} to semiconvex functions $f$, by considering the gradient vectors  and gradient curves of the semiconcave function $(-f)$.  We call the gradient vectors  ${\grad}_p(-f)$ the \emph{downward gradient vectors of $f$}, and the gradient curves of $-f$, the \emph{downward gradient curves of $f$}.
\end{rem}

\begin{rem}In \cite{ab-wp,ab-cbc}, sinusoidally $\kappa$-convex and $\kappa$-concave functions were called \emph{$\mathcal{F} \kappa$-convex} and \emph{$\mathcal{F} \kappa$-concave}.
\end{rem}

\section{Warped products}

 \label{sec:wp}
   
Basic properties of warped products with \emph{positive} warping functions, $f>0$,  were proved in \cite{had-wp}. They were used in \cite{ab-wp}, which treated vanishing of $f$ in an \emph{ad hoc} manner.  In this paper we require a
systematic treatment, including new properties, for $f\ge0$, 
given  in this section.  
 
Let $(B, f,F)$ be a WP-triple (Definition \ref{defn:triple}).

In this paper, 
$J$ always denotes some finite closed  interval.

\begin{defn}[Warped product]\label{def:wp} 
Consider the topological space $(B\times F)/\!\sim$, where the elements of $\{p\}\times F$ are identified if $f( p )=0$.  We denote this class by $\overline p$, or by  any of its representatives $(p,\varphi)$, $\varphi\in F$.

An \emph{admissible curve} for the triple $(B,f,F)$ is a curve $\gamma:J\to (B\times F)/\!\sim$.  We write $\gamma=(\gamma_B,\gamma_F)$ for $\gamma_B:J\to B$   
and $\gamma_F:J_+\to F$, where  $J_0=(f\circ\gamma_B)^{-1}(0)$, $J_+=J-J_0$.  
Set  $J_+=\sqcup_{i=1,2,\ldots} \,J_i$, where the $J_i$ are maximal open subintervals of $J_+$. 
We further assume $\gamma_B$ and
$\gamma_F|J_i$ to be Lipschitz, where   $\gamma_F|J_i$, $i=1,\ldots$, have a uniform Lipschitz constant. 
(Our class of admissible curves 
satisfies the concatenation property of  length structures in the sense of \cite{bbi};
 see remark at end of this section.)
Set 
\begin{equation}
\label{eq:length}
\length\gamma\  \ =\ \ \int _J\,\sqrt{
v_B^2+ (f\circ\gamma_B)^2\cdot v_F^2}\,,
\end{equation}
where $\int$ is Lebesgue integral, $v_B$ is the speed of $\gamma_B$, $v_F|J_+$ is  the speed of $\gamma_F|J_+$ and $v_F|J_0=0$.   
Then the integrand is defined  almost everywhere on $J$ and bounded.    

Equivalently,  
\[
\length\gamma\  \ =\ \  \sum_i \int _{J_i}\sqrt{
v_B^2+ (f\circ\gamma_B)^2\cdot v_F^2}  \ \ +\ \  \length\, (\gamma_B|J_0).
\]
   Here the first term
 is defined, 
 independently of enumeration, because the summands are positive.

Then the   \emph{warped product} \,$B\times_fF$\, is the corresponding intrinsic  space, where distance is given by infimum of lengths of admissible curves joining two given points. 

We refer to $B$ and $F$ as \emph{base} and \emph{fiber} respectively. 
$B\times\{\varphi_0\}$  is called a \emph{horizontal leaf};  and $\{p_0\}\times F$ when $f( p_0 )>0$,   a \emph{vertical leaf}.
\end{defn} 

\begin{rem}The vanishing set $f^{-1}(0)$ of $f$ represents the set on which the horizontal leaves $B\times\{\varphi_0\}$ are glued together.  At these points there is no well-defined projection $\gamma_F$ to $F$. 

\end{rem}  

\begin{prop}
\label{prop:wp-proj} 
The warped product $B\times_fF$ satisfies:
\begin{enumerate}

\item\label{horiz-leaf}
The intrinsic and extrinsic metrics   of any  horizontal leaf $B\times\{\varphi_0\}$ agree, and projection $(p,\varphi_0)\mapsto p$ is an isometry onto $B$.

\item\label{vert-leaf-proj}
If $f(p_0)\ne 0$, then the projection $(p_0,\varphi)\mapsto \varphi$ of any  vertical leaf \,$\{p_0\}\times F$, with its intrinsic metric,  is a homothety onto $F$ with multiplier $1/f(p_0)$.

\item\label{f-min}
If  $f$ achieves a positive minimum at $p_0$, then the intrinsic and extrinsic metrics of \,$\{p_0\} \times F$ agree.
\end{enumerate}
\end{prop}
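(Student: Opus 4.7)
The plan is to extract all three parts directly from the length formula \eqref{eq:length}, exploiting the two pointwise inequalities $\sqrt{v_B^2 + (f\circ\gamma_B)^2 v_F^2} \ge v_B$ and $\sqrt{v_B^2 + (f\circ\gamma_B)^2 v_F^2} \ge (f\circ\gamma_B)\,v_F$. For (1), the first inequality gives $\length \gamma \ge \length \gamma_B \ge |pq|_B$ for any admissible curve $\gamma = (\gamma_B, \gamma_F)$ from $\overline{(p,\varphi_0)}$ to $\overline{(q,\varphi_0)}$, since $\gamma_B$ is Lipschitz in the intrinsic space $B$; this bounds the extrinsic distance below by $|pq|_B$. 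Conversely, the horizontal lift $(\gamma_B,\varphi_0)$ of any Lipschitz curve $\gamma_B$ in $B$ from $p$ to $q$ is admissible, stays in the leaf, has $v_F \equiv 0$ on $J_+$, and length equal to $\length \gamma_B$. Taking the infimum over such curves yields both the extrinsic distance and the intrinsic leaf-distance equal to $|pq|_B$, so $(p,\varphi_0) \mapsto p$ is an isometry onto $B$.

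For (2), any curve in the vertical leaf $\{p_0\}\times F$ has $\gamma_B \equiv p_0$ and $v_B \equiv 0$, so the length formula reduces to $\length \gamma = f(p_0)\cdot \length \gamma_F$. Taking the infimum over Lipschitz $\gamma_F$ in the intrinsic space $F$ gives intrinsic leaf-distance $f(p_0)\cdot|\varphi\psi|_F$, proving that the projection to $F$ is a homothety of multiplier $1/f(p_0)$.

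For (3), the positive-minimum hypothesis forces $Z = \emptyset$, hence $J_0 = \emptyset$ and $\gamma_F$ is defined on all of $J$ for every admissible curve between $(p_0,\varphi)$ and $(p_0,\psi)$. Applying the second pointwise inequality yields
\[
\length \gamma \ \ge\ \int_J (f\circ\gamma_B)\, v_F \ \ge\ f(p_0)\cdot \length \gamma_F \ \ge\ f(p_0)\cdot|\varphi\psi|_F.
\]
The extrinsic distance is thus bounded below by $f(p_0)\cdot|\varphi\psi|_F$, which by (2) is exactly the intrinsic leaf-distance; the reverse inequality is automatic.

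I do not expect a substantive obstacle, as the whole proposition is a direct calculation with the length formula. The one point to verify is that the positive-minimum hypothesis in (3) is exactly what is needed to uniformly bound $f\circ\gamma_B$ from below along curves that leave the leaf: a mere positive infimum that is not attained, or a minimum equal to zero, would allow shortcuts in the fiber direction where $f$ is small (or, through identified vanishing points) and invalidate the estimate.
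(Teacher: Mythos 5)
Your proof is correct and follows essentially the same route as the paper: all three parts are read off directly from the length formula \eqref{eq:length}, with (1) and (2) immediate, and (3) amounting to the observation that the projection $(p,\varphi)\mapsto(p_0,\varphi)$ is length-nonincreasing when $p_0$ is a global minimum point of $f$. The paper states this more tersely, but the content is identical.
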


\begin{proof}
Claims  (\ref{horiz-leaf}) and (\ref{vert-leaf-proj})  are immediate from the 
length formula (\ref{eq:length}).

Also by (\ref{eq:length}), the projection onto \,$\{p_0\} \times F$ given by  \,$(p,\varphi)\mapsto (p_0,\varphi)$ is length-nonincreasing  if $p_0$ is a minimum point of $f$.  
Hence  (\ref{f-min}).
\end{proof}

\begin{rem} 
A horizontal leaf need not be convex even if $B\times_fF$ is a geodesic space, since vanishing of the warping function$\,f$ allows geodesics to bifurcate into distinct horizontal leaves.
For instance, suppose $\alpha:[0,1]\to B$ is a geodesic of $B$ such that 
$f(\alpha(0))=f(\alpha(1))
=0$ 
and $f\circ\alpha$ is not identically $0$.
 Then for any distinct $\varphi_1, \varphi_2 \in F$, the geodesic $(\alpha,\varphi_2)$ of  $B \times_fF$ has its endpoints in $B\times\{\varphi_1\}$ but does not lie in $B\times\{\varphi_1\}$.
\end{rem}

Now we show that distance in a warped product is fiber-independent, in the sense that distances may be calculated by substituting for $F$ a different intrinsic  space.  
Propositions \ref{prop:wp-proj} and \ref{prop:fiber-independence} summarize properties that for the case $f>0$ are given in \cite{had-wp}.  Proposition \ref{prop:wp-proj}(\ref{geo-fiber-proj}) is due to Chen.

\begin{prop}[Fiber independence]\label{prop:fiber-independence}
Let $W=B\times_fF$ and $W^*=B\times_fF^*$, where 
$F^*\ne$ point\, is an intrinsic space.

\begin{enumerate}
\item\label{dist-fiber-indep}
Let $p,q\in B$, $\varphi,\psi\in F$, and $\varphi^*,\psi^*\in F^*$.
\[
\text{If  }\ \  \ 
|\varphi\,\psi |_F=|\varphi^*\,\psi^*|_{F^*}\,,\ \ \ \text{then }\ \ \
|(p,\varphi)\, (q,\psi) |_W=|(p,\varphi^*)\, (q,\psi^*)|_{W^*} 
.\]
\hspace*{-12mm}
Let $\gamma=(\gamma_B,\gamma_F):J\to W$ be a geodesic such that  $f\circ\gamma_B>0$: 
\vspace{1mm}

\item\label{geo-fiber-proj}  \cite{C}
$\gamma_F$ is a pregeodesic in $F$.

\item\label{geo-fiber-indep}
Suppose  $\beta^*:J\to F^*$ is a pregeodesic in $F^*$ such that  $\beta^*$ and $\gamma_F$ have the same  speed, i.e. $v_{F^*}=v_F$ where $v_{F^*}$ is the speed of $\beta^*$.  Then $(\gamma_B,\beta^*)$ is a geodesic in $B\times_fF^*$.

\end{enumerate}
\end{prop}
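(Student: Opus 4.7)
The plan is to exploit the key observation that the length integrand in (\ref{eq:length}) depends on $\gamma_F$ only through its pointwise speed $v_F$, not through its direction in $F$. For part (\ref{dist-fiber-indep}), I would fix an admissible curve $\gamma=(\gamma_B,\gamma_F)\colon J\to W$ from $(p,\varphi)$ to $(q,\psi)$ and build an admissible $\gamma^*=(\gamma_B,\gamma_F^*)\colon J\to W^*$ from $(p,\varphi^*)$ to $(q,\psi^*)$ with $\length\gamma^*$ asymptotically no larger than $\length\gamma$; swapping the roles of $W$ and $W^*$ then yields equality of distances. When $a$ and $b$ lie in different components of $J_+$, or either lies in $J_0$, each $J_i$ has at most one constrained fiber endpoint, so $\gamma_F^*|J_i$ can be taken constant (matching the constrained value, if any); then $v_{F^*}\equiv 0$ and $\length\gamma^*\le\length\gamma_B\le\length\gamma$ with no limiting step. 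The substantive case is $J_0=\emptyset$, where $\gamma_F\colon J\to F$ is a single Lipschitz curve from $\varphi$ to $\psi$ of length $L\ge|\varphi\psi|_F=|\varphi^*\psi^*|_{F^*}$. Using intrinsicness of $F^*$, pick a curve $\tilde\gamma_F^*$ from $\varphi^*$ to $\psi^*$ of length $L^*$ arbitrarily close to $|\varphi^*\psi^*|_{F^*}$, parametrize $\tilde\gamma_F^*$ by arc length, and set $\gamma_F^*(t)=\tilde\gamma_F^*\!\bigl((L^*/L)\!\int_a^t v_F\bigr)$, so $v_{F^*}=(L^*/L)v_F$. The elementary inequality $\sqrt{v_B^2+f^2(L^*/L)^2v_F^2}\le\max(1,L^*/L)\sqrt{v_B^2+f^2v_F^2}$ then yields $\length\gamma^*\le\max(1,L^*/L)\length\gamma$, which can be made arbitrarily close to $\length\gamma$ because $L^*/L\le 1+\eta$ for arbitrarily small $\eta>0$.

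For (\ref{geo-fiber-proj}), I argue by contradiction: if $\gamma_F$ were not a pregeodesic on some $[a',b']\subset J$, then $L_0:=\length(\gamma_F|[a',b'])>|\gamma_F(a')\gamma_F(b')|_F$, and intrinsicness of $F$ produces a connecting curve $\tilde\gamma_F$ of length $L_1<L_0$. Reparametrize $\tilde\gamma_F$ on $[a',b']$ to have speed $(L_1/L_0)v_F$; then $(\gamma_B|[a',b'],\tilde\gamma_F)$ has strictly smaller length than $\gamma|[a',b']$ (since $f\circ\gamma_B>0$ on $[a',b']$ and $L_0>0$ forces $v_F\not\equiv 0$ a.e.\ there), contradicting geodesicity of $\gamma$. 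For (\ref{geo-fiber-indep}), part (\ref{geo-fiber-proj}) combined with the hypothesis $v_{F^*}=v_F$ and the pregeodesic property of $\beta^*$ yields $|\beta^*(a)\beta^*(b)|_{F^*}=\length\beta^*=\length\gamma_F=|\gamma_F(a)\gamma_F(b)|_F$. Part (\ref{dist-fiber-indep}) then equates $|\gamma(a)\gamma(b)|_W$ with $|(\gamma_B(a),\beta^*(a))(\gamma_B(b),\beta^*(b))|_{W^*}$, while the length formula gives $\length(\gamma_B,\beta^*)=\length\gamma=|\gamma(a)\gamma(b)|_W$, so $(\gamma_B,\beta^*)$ realizes the $W^*$-distance between its endpoints and is therefore a geodesic.

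The main obstacle I anticipate is the bookkeeping for admissibility in (\ref{dist-fiber-indep}): one must verify that $\gamma_F^*$ as constructed satisfies Definition \ref{def:wp}, in particular that its pieces $\gamma_F^*|J_i$ share a uniform Lipschitz constant. This should hold since each piece is either constant or a reparametrized curve whose speed is bounded by a fixed multiple of that of $\gamma_F|J_i$, but the verification needs care; note that the degenerate subcase $L=0$ forces $\varphi=\psi$ (hence $\varphi^*=\psi^*$) and so makes the limiting step vacuous.
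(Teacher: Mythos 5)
Your argument is correct and follows essentially the same strategy as the paper's: use the length formula's dependence on $\gamma_F$ only through its speed to transfer approximating curves between $W$ and $W^*$ after splitting on whether $J_0$ is empty, then deduce (2) and (3) by the same reparametrization and distance-comparison steps. The paper phrases (1) via a minimizing sequence $\gamma_i$ with $L_i^*\le L_i+\epsilon_i$ rather than the $\max(1,L^*/L)$ factor, and proves (2) globally rather than on a subinterval, but these are cosmetic differences; your explicit handling of the degenerate $L=0$ case and the admissibility bookkeeping are reasonable additions.
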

\begin{proof} 
Let $\gamma_i:J\to W$ be admissible curves with endpoints $(p,\varphi)$, $(q,\psi)$, where $\length\gamma_i\to |(p,\varphi)\, (q,\psi) |$.

Suppose $f\circ(\gamma_i)_B>0$.  Set $L_i=\length\, (\gamma_i)_F$ and $v_i=\speed\,(\gamma_i)_F$. Let $\beta^*_i$ be curves in $F^*$  with endpoints $\varphi^*$, $\psi^*$ and lengths $L_i^*\to |\varphi^*\,\psi^*|=|\varphi\,\psi |$. Without loss of generality, $L_i^*\le L_i+\epsilon_i$, $\epsilon_i\to 0$.  Define $\gamma_i^*:J\to W^*$ by setting $(\gamma_i^*)_B=(\gamma_i)_B$, and  letting \,$(\gamma_i^*)_{F^*}$\,  be \,$\beta_i^*$\, reparametrized with speed $(L_i^*/L_i)\cdot v_i$. By the length formula  (\ref{eq:length}),  $|(p,\varphi)\, (q,\psi) |=\lim\length(\gamma_i )\ge \limsup(\length\gamma_i^*)$. Hence  $|(p,\varphi)\, (q,\psi) |\ge |(p,\varphi^*)\, (q,\psi^*)|$.  

Suppose $(f\circ(\gamma_i)_B)^{-1}(0)\ne\emptyset$. By the length formula,  there is an admissible curve $\check\gamma_i$ with endpoints  $(p,\varphi)$, $(q,\psi)$ that is not longer than $\gamma_i$, such that $(\check\gamma_i)_B=(\gamma_i)_B$ and $(\check\gamma_i)_F$  is constant on each maximal subinterval on which  $f\circ(\gamma_i)_B>0$ . Thus we may assume  $(\gamma_i)_F$ has this form, hence $\length\gamma_i=\length(\gamma_i)_B$. There are  curves $\gamma_i^*$ of the same form  in $W^*$ with endpoints  
$(p,\varphi^*)$, $(q,\psi^*)$, such that $(\gamma_i^*)_B=(\gamma_i)_B$  and \,$\length\gamma_i^*=\length(\gamma_i)_B=\length\gamma_i$.  

Therefore in all cases, $|(p,\varphi)\, (q,\psi) |\ge |(p,\varphi^*)\, (q,\psi^*)|$.
Reversing the roles of $W$ and $W^*$ proves (1).

To prove (2), suppose $\gamma_F$ has length $L>|\varphi\,\psi|$. Set $v=\speed\gamma_F$. 
Let $\beta:J\to F$ be a curve with endpoints $\varphi$, $\psi$ and  length $L'<L$. Then the length of $\gamma$ is reduced  by replacing $\gamma_F$ with the reparametrization of $\beta$  with speed $(L'/L)\cdot v$.  This contradiction gives (2).

(3) is immediate from (1) and the length formula  (\ref{eq:length}).
\end{proof}

The two-piece property in the next proposition is  
worthy of note.

\begin{prop}[Vanishing warping function]
\label{prop:f=0} 
Let $\gamma=(\gamma_B,\gamma_F):J\to B\times_fF$ be a geodesic  joining $(p,\varphi)$ and $(q,\psi)$.  Suppose $J_0\ne\emptyset$ where $J_0=(f\circ\gamma_B)^{-1}(0)$.  
\begin{enumerate}

\item\label{f=0-fiber-projection}
 The  restriction of $\gamma_F$ to any maximal subinterval $J_i$ of $J-J_0$   is constant. 
 If $J_i$ Êhas no common endpoint with $J$, the constant can be changed to any other
Êpoint in $F$ and the resulting curve will still be a geodesic in $B\times_fF$ with the same endpoints.

\item\label{f=0-distance}
$|(p,\varphi)\,(q,\psi)| = \length \gamma_B$.

\item\label{f=0-base-projection}{\rm(Two-piece property)}
$\gamma_B$ consists of two geodesics of $B$ that intersect on the maximal subinterval $[t_0,t_1]$ of $J$ having endpoints in $J_0$. 

\end{enumerate}
\end{prop}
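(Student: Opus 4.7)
The plan is to attack the three parts in order, since (2) follows cheaply from (1), and (3) is a sub-geodesic comparison argument that uses (2) together with the trivial lift of base curves to the warped product.

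For (1), I would fix a maximal subinterval $J_i$ of $J_+=J-J_0$ and compare $\gamma|J_i$ against the admissible curve obtained by replacing $\gamma_F|J_i$ with a \emph{constant}. Admissibility requires a brief case analysis: either both endpoints of $J_i$ lie in $J_0$, in which case $\gamma_B$ sends them into $Z=f^{-1}(0)$ and both endpoint classes in $(B\times F)/\!\sim$ are fiber-independent, so any constant value is allowable; or one endpoint of $J_i$ coincides with an endpoint of $J$, forcing the constant to be $\varphi$ or $\psi$. On $J_i$ one has $f\circ\gamma_B>0$, so the length formula (\ref{eq:length}) yields
\[
\int_{J_i}\sqrt{v_B^2+(f\circ\gamma_B)^2 v_F^2}\ \ \ge\ \ \int_{J_i} v_B,
\]
with equality iff $v_F=0$ a.e.\ on $J_i$. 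Since $\gamma$ is a geodesic, equality must hold, and Lipschitz absolute continuity of $\gamma_F|J_i$ then forces it to be constant. The freedom to change the constant when $J_i$ has no common endpoint with $J$ follows from the same identity: any constant-fiber replacement produces an admissible curve of equal length with the same endpoints in $W$.

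Part (2) is then immediate: by (1), $v_F=0$ on each $J_i$, and $v_F|J_0=0$ by convention, so the integrand in (\ref{eq:length}) collapses to $v_B$ almost everywhere on $J$, giving $\length\gamma=\length\gamma_B$; this equals $|(p,\varphi)\,(q,\psi)|$ because $\gamma$ is a geodesic.

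For (3), set $t_0=\inf J_0$ and $t_1=\sup J_0$; both lie in $J_0$ because $f\circ\gamma_B$ is continuous, so $J_0$ is closed in $J$. Apply (2) to the sub-geodesic $\gamma|[\min J,\,t_1]$, whose endpoint $\gamma(t_1)$ is the collapsed class $\overline{\gamma_B(t_1)}$: this yields $\length(\gamma_B|[\min J,t_1])=|(p,\varphi)\,\overline{\gamma_B(t_1)}|_W$. On the other hand, any curve in $B$ from $p$ to $\gamma_B(t_1)$ lifts to a constant-fiber admissible curve in $W$ of equal length, whence $|(p,\varphi)\,\overline{\gamma_B(t_1)}|_W\le|p\,\gamma_B(t_1)|_B$. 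Combining, $\gamma_B|[\min J,t_1]$ is a geodesic of $B$; the symmetric argument with $\max J$ in place of $\min J$ shows the same for $\gamma_B|[t_0,\max J]$. These two $B$-geodesics both restrict to $\gamma_B$ on the overlap $[t_0,t_1]$, which is exactly the two-piece property.

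The main obstacle I expect is the case analysis in (1): I need to verify that the constant-fiber replacement is genuinely admissible in each configuration and that the endpoints in $W$ are preserved, which rests on the identification of fiber classes over $Z$ built into the definition of $(B\times F)/\!\sim$. After (1), (2) and (3) amount to bookkeeping with the length formula and the canonical lift of base curves to $W$.
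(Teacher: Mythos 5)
Your proof is correct and follows essentially the same strategy as the paper's: the central tool is the observation that any curve in $B$ through $Z$ lifts to a constant-fiber admissible curve in $W$ of the same length, which together with the $\sqrt{v_B^2+(f\circ\gamma_B)^2 v_F^2}\ge v_B$ inequality in the length formula forces $v_F=0$ a.e.\ on $J_+$ and $\length\gamma=\length\gamma_B$. The only stylistic difference is in part (3), where the paper argues by contradiction (if $\gamma_B|[0,t_1]$ were not a geodesic, concatenating a shorter $B$-segment to $\gamma_B(t_1)\in Z$ with $\gamma_B|[t_1,1]$ would give a shorter $Z$-crossing competitor), while you argue directly by applying part (2) to the sub-geodesic $\gamma|[\min J,t_1]$ and sandwiching against the lift bound $|(p,\varphi)\,\overline{\gamma_B(t_1)}|_W\le|p\,\gamma_B(t_1)|_B$; both are the same calculation organized differently, and your version has the mild advantage of making the role of the sub-geodesic explicit.
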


\begin{proof} 
By the length formula (\ref{eq:length}), any curve in $B$ joining $p$ and  $q$ and passing through $Z=f^{-1}(0)$ is the projection of a curve in $B\times_fF$ of the same length joining $(p,\varphi)$ and $(q,\psi)$, and such that the projection to $F$ on each interval $J_i$ is constant.  
Claims (\ref{f=0-fiber-projection}) and (\ref{f=0-distance}) follow.  

It follows also that  $\gamma_B$ minimizes length of curves in $B$ from $p$ to $q$ that intersect $Z$.  If $t_0=t_1$ in claim (\ref{f=0-base-projection}), it follows that 
$\gamma_B
 |[0,t_0]$ and 
 $\gamma_B |[t_0,1]$ 
 are geodesics and the claim holds. Suppose $t_0<t_1$.  Then 
$\gamma_B |[0,t_1]$ is  a geodesic, since otherwise  by the triangle inequality there is a curve from $p$ to $q$ passing through 
$\gamma_B(t_1)\in Z$ that is shorter than 
$\gamma_B$.  Similarly,  
$\gamma_B |[t_0,1]$ is a geodesic.
\end{proof}

Clairaut's theorem
on geodesics of a surface of revolution extends  to the metric setting.  The proof that the formulas hold almost everywhere (claim \ref{clairaut-ae} in the proof below) is  in  \cite{had-wp}.  Here we prove the new result that the speed $v_B$ exists and is continuous for all $t$; when $(f\circ\gamma_B)^{-1}(0)=\emptyset$, the same holds for $v_F$; and when $(f\circ\gamma_B)^{-1}(0) \ne \emptyset$,
setting ${\overline{v}}_F \equiv 0$ give a continuous extension of $v_F$.

\begin{thm}[Clairaut's theorem]
\label{thm:clairaut} 
Let $\gamma=(\gamma_B,\gamma_F):J\to B\times_fF$ be a geodesic with speed  $a$.  
Then $v_B$ and an extension $\overline{v}_F$
of $v_F$ are defined and
Lipschitz
 continuous for all $t\in J$, and 
 there is a constant $c(\gamma)$ such that
\begin{equation}\label{eq:clairaut}
(f\circ \gamma_B)^{2}\cdot \overline{v}_F=c(\gamma)\,; \end{equation}

\vspace{-6mm}

\begin{equation}\label{eq:clairaut1}
\text{if } f\circ\gamma_B>0,\,\, \text{ then }
v_B = \sqrt{a^2-(c(\gamma)/f\circ\gamma_B)^2}.
\end{equation}
\end{thm}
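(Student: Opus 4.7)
The plan is to treat the cases $J_0 \ne \emptyset$ and $J_0 = \emptyset$ separately, using in both that $\gamma$'s constant speed $a$ gives the a.e.\ identity $v_B^2 + (f\circ\gamma_B)^2 v_F^2 = a^2$.

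\emph{Case $J_0 \ne \emptyset$.} By Proposition \ref{prop:f=0}\,(\ref{f=0-fiber-projection}), $\gamma_F$ is constant on each component of $J_+$, so $v_F \equiv 0$ on $J_+$; then $\overline v_F \equiv 0$ is trivially Lipschitz on $J$ and Clairaut's relation holds with $c(\gamma)=0$. By Proposition \ref{prop:f=0}\,(\ref{f=0-base-projection}), $\gamma_B$ is the union of two constant-speed $B$-geodesics overlapping on a subinterval; matching speeds on the overlap together with $\length \gamma_B = a\cdot |J|$ (from Proposition \ref{prop:f=0}\,(\ref{f=0-distance})) force the common speed to be $a$. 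Checking the junction points then shows $v_B \equiv a$ pointwise on $J$, hence Lipschitz.

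\emph{Case $J_0 = \emptyset$.} Now $f\circ\gamma_B > 0$ on the compact interval $J$, hence is bounded away from $0$. Combining the a.e.\ Clairaut identity $(f\circ\gamma_B)^2 v_F = c$ from \cite{had-wp} with the a.e.\ constant-speed identity yields, almost everywhere,
\[
v_F \;=\; \frac{c}{(f\circ\gamma_B)^2}, \qquad v_B \;=\; \sqrt{a^2 - c^2/(f\circ\gamma_B)^2},
\]
and both right-hand sides are Lipschitz in $t$. The remaining task is the pointwise upgrade. For $v_F$, Proposition \ref{prop:fiber-independence}\,(\ref{geo-fiber-proj}) says $\gamma_F$ is a pregeodesic in $F$, so $|\gamma_F(s)\gamma_F(t_0)|_F = \int_{t_0}^s v_F\,d\tau$ for all $s,t_0 \in J$; since the integrand equals a continuous function a.e., the fundamental theorem of calculus gives $v_F(t_0) = c/(f\circ\gamma_B(t_0))^2$ at every $t_0$.

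For $v_B$ this is subtler, because $\gamma_B$ need not be a $B$-pregeodesic. The upper bound $\limsup_{s\to t_0} |\gamma_B(s)\gamma_B(t_0)|_B/|s-t_0| \le \sqrt{a^2 - c^2/f(\gamma_B(t_0))^2}$ follows from $|\gamma_B(s)\gamma_B(t_0)|_B \le \int_{t_0}^s v_B\,d\tau$ and the FTC. For the matching lower bound I would exploit the minimality of $\gamma$ in $W$ against a competitor built from $\gamma_F$: for each $\epsilon > 0$ and $s$ near $t_0$, choose a constant-speed curve $\alpha\colon [t_0, s] \to B$ joining $\gamma_B(t_0)$ and $\gamma_B(s)$ of length at most $|\gamma_B(t_0)\gamma_B(s)|_B + \epsilon(s-t_0)$, with speed $\beta$. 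Then $(\alpha, \gamma_F|[t_0, s])$ is an admissible curve in $W$ with the same endpoints as $\gamma|[t_0, s]$, so
\[
\int_{t_0}^s \sqrt{\beta^2 + (f\circ\alpha)^2\, v_F^2}\,d\tau \;\ge\; a(s - t_0).
\]
Since $\gamma_B$ is Lipschitz with constant $a$, any such $\alpha$ stays within $O(s-t_0)$ of $\gamma_B(t_0)$ in $B$, so $f\circ\alpha \to f(\gamma_B(t_0))$ uniformly as $s \to t_0$. Expanding and letting $\epsilon \to 0$ then yields $\liminf_{s\to t_0} |\gamma_B(s)\gamma_B(t_0)|_B/|s-t_0| \ge \sqrt{a^2 - c^2/f(\gamma_B(t_0))^2}$, matching the upper bound.

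The main obstacle is exactly this pointwise lower bound on $v_B$: unlike for $\gamma_F$, there is no pregeodesic property of $\gamma_B$ to run a direct FTC argument, so the variational comparison against the admissible competitor $(\alpha, \gamma_F)$, using minimality of $\gamma$ in $W$, is the essential new input beyond the a.e.\ Clairaut relation of \cite{had-wp}.
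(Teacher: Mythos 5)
Your proof is correct, and it follows the paper's structure in all but one step: the same case split on whether $J_0$ is empty, the same use of Proposition \ref{prop:f=0} and the two-piece property when $J_0 \ne \emptyset$, the same citation of \cite{had-wp} for the a.e.\ identities, and the same pregeodesic-plus-FTC argument for the pointwise value of $v_F$. Where you diverge is the pointwise upgrade of $v_B$, which you correctly identify as the crux. The paper obtains \emph{both} the upper and lower bounds on $|\gamma_B(-s_1)\,\gamma_B(s_2)|/(s_1+s_2)$ by sandwiching $f$ between nearby constants $b \pm \epsilon$ on a small ball and comparing the warped-product metric against the Cartesian product metrics $B \times_{b\pm\epsilon} F$, using the Pythagorean distance formula in a product. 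You instead get the upper bound for free from $|\gamma_B(s)\gamma_B(t_0)|_B \le \int_{t_0}^s v_B$ and the FTC, and obtain the lower bound from a first-variation argument: minimality of $\gamma$ against the admissible competitor $(\alpha, \gamma_F|[t_0,s])$, where $\alpha$ is a near-geodesic of $B$. Your route avoids invoking the Cartesian product distance formula and the containment-in-a-ball bookkeeping the paper needs; the paper's route is more symmetric, treating both bounds by the same device. Both arguments rest on the same two facts — continuity of $v_F$ established beforehand, and the Lipschitz control of $f$ near $\gamma_B(t_0)$ — so the proofs are close cousins, but the competitor-curve estimate for the lower bound is a genuine variant rather than a rephrasing. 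One small point to tidy: in the case $J_0 \ne \emptyset$, when $t_0 = t_1$ the two pieces of $\gamma_B$ overlap only at a single point, so ``matching speeds on the overlap'' doesn't by itself give the conclusion; you should instead argue as the paper does, that each piece has a.e.\ speed $a$ by the length formula, hence constant speed $a$, and the two-sided metric speed at the junction is then also $a$.
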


\begin{proof} 
Suppose $J_0\ne\emptyset$ where $J_0=(f\circ\gamma_B)^{-1}(0)$.  
By Proposition \ref{prop:f=0}\,(\ref{f=0-fiber-projection}),  $v_F:J-J_0\to F$ satisfies $v_F=0$. In this case, $v_F$ may be extended to all of $J$ by setting 
$\overline{v}_F \equiv 0$, and
(\ref{eq:clairaut}) holds with $c(\gamma)=0$. Moreover, from the two-piece property we conclude that $\gamma_B$ is a geodesic, with the only exception
possible being a single break point  when $t_0 = t_1$;  in this case, $v_B$ still exists   with constant value $a$.

So suppose $f\circ \gamma_B>0$.  

Since $\gamma_B$ and $\gamma_F$ are Lipschitz, the speeds $v_B$ and $v_F$ are  defined almost everywhere, and the Lebesgue integral of speed on an interval is arc-length.

\begin{claim}\label{clairaut-ae}
(\ref{eq:clairaut}) and (\ref{eq:clairaut1}) hold almost everywhere.  
\end{claim}

This claim is proved in \cite[Theorem 3.1]{had-wp}.   

\begin{claim} 
$v_F$ 
is defined and continuous on $J$ and satisfies (\ref{eq:clairaut}).
\end{claim}

Set $c=c(\gamma)$.  Since $c/(f\circ \gamma_B)^{2}$ is defined and continuous on $J$, $v_F$ has a continuous extension $\overline{v}_F$ to all of \,$J$\, by claim \ref{clairaut-ae}.  It follows that the arc-length function  $s(t)$ of   $\gamma_F$ is obtained by integrating the continuous function $\overline{v}_F$, and so $ds/dt=\overline{v}_F$.  Since $\gamma_F$ is a pregeodesic, $ds/dt$ is the speed of $\gamma_F$, i.e. $v_F =ds/dt=\overline{v}_F$.  The claim follows.

\begin{claim}
$v_B$ is defined and continuous on $J$ and satisfies (\ref{eq:clairaut1}).
\end{claim}

It suffices to assume $J$ is an open interval containing $0$, and show that $v_B$ is defined and continuous at $t=0$. 

Suppose $\speed\gamma =1$. Let  $f(\gamma_B(0))=b>0$.  For 
 $\epsilon>0$, consider a ball $\mathcal{B}=\mathcal{B}(\gamma_B(0),2r)\subset f^{-1}(b-\epsilon,b+\epsilon)$.  If $0<s_1,s_2< r$, then $\gamma_B|[-s_1,s_2]\subset \mathcal{B}$.
 
Comparing  the warped product metric with the
Cartesian product metric on
$B\times_{b+\epsilon}F$,\begin{equation}\label{eq:compare-product}
s_1+s_2 < \sqrt {|(\gamma_B(-s_1)\ \gamma_B(s_2)|^2
+ (b+\epsilon)^2\cdot |\gamma_F(-s_1)\ \gamma_F(s_2)|^2}.
\end{equation}
Since (\ref{eq:clairaut}) holds almost everywhere,
$$|\gamma_F(-s_1)\  \gamma_F(s_2)| = \int_{-s_1}^{s_2}\frac{c}{f(\gamma_B(s))^2}ds
< (s_1+s_2)\cdot\frac{c}{(b-\epsilon)^2}.$$
Hence by (\ref{eq:compare-product}),
$$|\gamma_B(-s_1)\ \gamma_B(s_2)|^2 >
(s_1+s_2)^2\cdot\left(1 -\frac{(b+\epsilon)^2\cdot c^2}{(b-\epsilon)^4}\right).$$

Similarly, comparison with  
$B\times_{b-\epsilon}F$ gives an upper bound:
$$|\gamma_B(-s_1) \  \gamma_B(s_2)|^2 <
(s_1+s_2)^2\cdot\left(1 -\frac{(b-\epsilon)^2\cdot c^2}{(b+\epsilon)^4}\right).$$

Dividing by $(s_1+s_2)^2$, and taking the limit first as $s_1, s_2 \to 0$, then as $\epsilon \to 0$, we obtain that $v_B^2$ exists and  equals $1- (c/b)^2$,  the value needed
for continuity.

By (\ref{eq:clairaut}), if  $\speed\gamma =a$ then $c(\gamma)=a\cdot c(\check\gamma)$ where  $\check\gamma$ is a unitspeed reparametrization of $\gamma$. Hence (\ref{eq:clairaut1}).

\begin{claim}
$v_B$ and $v_F$ are Lipschitz continous.
\end{claim}
This claim follows from (\ref{eq:clairaut}) and (\ref{eq:clairaut1}), since we assume $f$ is locally Lipschitz.
\end{proof}

\begin{rem}
The original formulation of (\ref{eq:clairaut1})  in \cite{had-wp} states that any geodesic for which $f$ is nonvanishing has a constant-speed reparametrization $\gamma$ satisfying 
$$
\frac{1}{2}\,{v_B}^2 + \frac{1}{2\,(f\circ \gamma_B)^2} = E.
$$
almost everywhere.
In this form, Clairaut's equation has a potential theoretic interpretation,  where the constant 
$E$ is called the total energy and the terms equated to $E$ are the
kinetic and potential energies.

\end{rem}

\begin{cor}[Vertical geodesics]\label{cor:vertical}
Let $\gamma=(\gamma_B,\gamma_F):[-s_0,s_0]\to B\times_fF$ be a geodesic with speed $a$, 
where   $\gamma(-s_0)$ and $\gamma(s_0)$ lie in the
same vertical leaf $\{p\}\times F$. Then
\begin{enumerate}
\item\label{F-midpt}
If $f\circ\gamma_B>0$, then $\gamma_F(0)$ is the midpoint of $\gamma_F$.

\item\label{symmetry}If $f\circ\gamma_B>0$, there is a geodesic $\check\gamma:[-s_0,s_0]\to B\times_fF$ with the same endpoints as $\gamma$ that is symmetric about its midpoint, i.e., $\check\gamma_B(-s) = \check\gamma_B(s)$.

 \item\label{F-midpt1}
 If $\,(f\circ\gamma_B)^{-1}(0)\ne\emptyset$, then $\gamma_B(0)$ is a nearest point to $p$ of $Z=f^{-1}(0)$.

 \item\label{min}
The minimum value of  \,$f\circ\gamma_B$\, is \,$(f\circ \gamma_B)(0))=c(\gamma)/ a$.

\item\label{speed-0}
If $f\circ\gamma_B>0$, then the speed $v_B$ of $\gamma_B$ satisfies 
$v_B(s)=0$ if and only if  $(f\circ\gamma_B)(s)=c(\gamma)/ a$.

\end{enumerate} 
\end{cor}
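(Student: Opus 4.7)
Parts \ref{min} and \ref{speed-0} follow directly from Clairaut's equation (\ref{eq:clairaut1}). Since $v_B^2 = a^2 - (c(\gamma)/(f\circ\gamma_B))^2 \ge 0$ requires $f\circ\gamma_B \ge c(\gamma)/a$, with equality precisely where $v_B = 0$, the minimum value of $f\circ\gamma_B$ is $c(\gamma)/a$, attained exactly at zeros of $v_B$. That this minimum is attained at $s=0$ is a consequence of the symmetry established below in \ref{F-midpt}--\ref{symmetry}.

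For \ref{F-midpt1}, apply Proposition \ref{prop:f=0}. Its two-piece property writes $\gamma_B$ as two $B$-geodesics meeting along $[t_0,t_1]\subseteq J_0 := (f\circ\gamma_B)^{-1}(0)$, and gives $|(p,\varphi_1)(p,\varphi_2)|_W = \length\gamma_B$. Going to any nearest $Z$-point of $p$ and exploiting fiber collapse at $Z$ yields $|(p,\varphi_1)(p,\varphi_2)|_W\le 2\dist_Z(p)$. On the other hand, splitting $\gamma_B$ at $t_1$ presents it as two $B$-geodesics, each between $p$ and $\gamma_B(t_1)\in Z$, so $\length\gamma_B\ge 2\dist_Z(p)$. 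Equality throughout forces $s_0 a = \dist_Z(p)$ and $t_0 = t_1 = 0$, so $\gamma_B(0)\in Z$ is a nearest point of $Z$ to $p$.

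For \ref{F-midpt} and \ref{symmetry}, the crucial intermediate claim is that when $f\circ\gamma_B>0$, the base projection is symmetric: $\gamma_B(-s) = \gamma_B(s)$. Granting this, $f\circ\gamma_B$ is an even function of $s$, so by (\ref{eq:clairaut}) the fiber speed $v_F = c(\gamma)/(f\circ\gamma_B)^2$ is even, and the arc lengths of $\gamma_F|[-s_0,0]$ and $\gamma_F|[0,s_0]$ agree, proving \ref{F-midpt}. Taking $\check\gamma := \gamma$ then yields \ref{symmetry}.

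To prove the symmetry of $\gamma_B$, use fiber independence (Proposition \ref{prop:fiber-independence}) to replace $F$ by $\R$, obtaining $\gamma^*=(\gamma_B,\gamma_F^*)$ in $W^* = B\times_f \R$ from $(p,0)$ to $(p,d)$, where $d = |\varphi_1\varphi_2|_F$; the reflection $\Phi(q,t)=(q,d-t)$ is an isometry of $W^*$ swapping the endpoints. Since $c(\gamma)>0$ (as $d>0$), $\gamma_F^*$ is strictly monotone, and reparametrizing by $u=\gamma_F^*(s)\in[0,d]$ rewrites $\length\gamma^*$, via (\ref{eq:length}) and (\ref{eq:clairaut}), as $I[\beta] = \int_0^d \sqrt{|\beta'(u)|_B^2 + f(\beta(u))^2}\,du$ for $\beta(u)=\gamma_B(s(u))$. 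Since $I$ has no explicit $u$-dependence, the reflected curve $\tilde\beta(u)=\beta(d-u)$ is also a minimizer with the same boundary values. The main obstacle is to conclude $\beta=\tilde\beta$; I would argue this via a turning-point analysis: by \ref{min}--\ref{speed-0}, some $t^*$ has $v_B(t^*)=0$, at which the tangent of $\gamma^*$ lies purely in the fiber direction, so the two pieces of $\gamma^*$ on either side of $t^*$ are related by time reversal in the Clairaut first integral, whence $\gamma_B(t^*+s)=\gamma_B(t^*-s)$; the common boundary value $\gamma_B(\pm s_0) = p$ then forces $t^*=0$.
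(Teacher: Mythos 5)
Your argument for part (\ref{F-midpt1}) is correct and is essentially the same two-piece/nearest-point argument the paper uses. The problem is in the treatment of parts (\ref{F-midpt}) and (\ref{symmetry}), and hence in the way you finish (\ref{min}) and (\ref{speed-0}).

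You are aiming to prove the \emph{stronger} statement that $\gamma_B$ itself satisfies $\gamma_B(-s)=\gamma_B(s)$, and you acknowledge that you do not close the gap of showing $\beta=\tilde\beta$. That gap is genuine, for two independent reasons. First, the ``turning-point analysis'' needs a point $t^*$ with $v_B(t^*)=0$. At the stage where you invoke this you have only the Clairaut inequality $f\circ\gamma_B\ge c(\gamma)/a$; that this bound is \emph{attained} is exactly part (\ref{min}), which you deferred to the symmetry in (\ref{F-midpt})--(\ref{symmetry}). So the argument is circular. Second, even granting a turning point, the claim that ``the two pieces of $\gamma^*$ on either side of $t^*$ are related by time reversal in the Clairaut first integral, whence $\gamma_B(t^*+s)=\gamma_B(t^*-s)$'' tacitly uses that the base trajectory is uniquely determined by its position at $t^*$, $v_B(t^*)=0$, and the Clairaut constant. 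That is true for Riemannian warped products (an ODE uniqueness statement), but this corollary is proved with no curvature assumption on $B$: geodesics in $B\times_fF$ need not be unique, and the base projection of a vertical-leaf geodesic can perfectly well break the symmetry $\gamma_B(-s)=\gamma_B(s)$. For the same reason, the observation that the reflection $\Phi$ is an isometry with $\Phi\circ\gamma^*(-\cdot)$ again a geodesic does not by itself force $\Phi\circ\gamma^*(-\cdot)=\gamma^*$.

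The statement does not actually require $\gamma$ to be symmetric; it only asserts the \emph{existence} of a symmetric geodesic $\check\gamma$ with the same endpoints. The paper's proof exploits exactly this freedom. After replacing $F$ by the symmetric interval $[-\ell/2,\ell/2]$, one lets $s_1\in[0,s_0)$ parametrize the $F$-midpoint of $\gamma_F$ and builds a new admissible curve $\check\gamma$ by keeping $\gamma|[s_1,s_0]$ and prepending the fiber-reflection of that arm, traversed in reverse. This competitor has length $2a(s_0-s_1)\le 2as_0$, with strict inequality unless $s_1=0$; minimality of $\gamma$ then forces $s_1=0$, giving (\ref{F-midpt}), and the resulting $\check\gamma$ is symmetric, giving (\ref{symmetry}). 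From $\check\gamma_B(-s)=\check\gamma_B(s)$ one gets $\check v_B(0)=0$, and since $\check\gamma_B$ agrees with $\gamma_B$ on $[0,s_0]$ and $v_B$ is continuous (Theorem \ref{thm:clairaut}), $v_B(0)=0$; Clairaut's equation (\ref{eq:clairaut1}) then yields (\ref{min}) and (\ref{speed-0}). This cut-reflect-paste construction is the missing idea: it proves symmetry of \emph{some} geodesic with the same endpoints, which is all the statement claims, and it sidesteps uniqueness entirely.
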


\begin{proof}
Suppose  $f\circ\gamma_B >0$.  
 
It suffices to take 
 $F=[-\ell/2,\ell/2]$, $\gamma_F(-s_0)= -\ell/2$,  $\gamma_F(s_0)=\ell/2$, 
where $\ell=\length\gamma_F$. Indeed, by Proposition \ref{prop:fiber-independence}\,(\ref{geo-fiber-indep}),
 $\gamma_B$ remains unchanged by this substitution; and by  (\ref{eq:clairaut}), $c(\gamma)$ also remains unchanged.

Without loss of generality, the midpoint of  $\gamma_F$ is $\gamma_F(s_1)$ for $s_1\in [0,s_0)$.

The reflection of $F$ in $0$ induces an isometric reflection of $B \times_f F$. Let $\check\gamma: [-s_0+2s_1,s_0]\to B\times_fF$ be defined in two halves, for which the second half coincides with $\gamma$ on $[s_1,s_0]$ and first half traces in reverse the reflection of the first half on $[-s_0+2s_1, s_1]$. Then  $\check\gamma$ has the same endpoints as $\gamma$; both arcs have the same length, $a(s_0-s_1)$, hence $\length\check\gamma =  
2a(s_0-s_1)\le 2as_0= \length \gamma$, with equality if and only if $s_1=0$.  Since $\gamma$ is a geodesic, $s_1=0$, hence  (\ref{F-midpt}) and (\ref{symmetry}).

Since $\check v_B(0)=0$ where $\check v_B$ is the speed of $\check\gamma_B$, we also have $v_B(0)=0$.  By (\ref{eq:clairaut1}), the vanishing set of $v_B$ is also the set on which  $f\circ\gamma_B$ takes its minimum value $c(\gamma)/a$, hence (\ref{min}) and (\ref{speed-0}).

Alternatively, suppose $(f\circ\gamma_B)^{-1}(0)\ne \emptyset$. 
Since $\gamma_B$ minimizes length of loops in $B$ at $p$  that intersect $Z$, we have (\ref{F-midpt1}).
(\ref{min}) is immediate since $c(\gamma)=0$. 
\end{proof}

\begin{rem}
Let $L$ denote  \,$\length\gamma$\, as defined by (\ref{eq:length}).  Then \,$L=L_{\,\Sigma}$,\, for\begin{equation}
\label{eq:length1}
L_{\,\Sigma}\,\,=\,\sup_{t_0<\ldots<t_n}\,\sum_{i=1}^n \,d_{\,i}\,,
\end{equation}
where the supremum is taken over all partitions  \,$t_0<\ldots<t_n$\, of $J$.  Here, letting $\overline t_i$ be a minimum point of $(f\circ\gamma_B)|[t_{i-1}\,t_i]$\,, 
\[
d_i=
\begin{cases}
\bigl |\gamma(t_i)\,\,\gamma(t_{i-1})\bigr |_{\,B\,\times\,(f\circ\gamma_B)(\overline t_i)\,\cdot F}\,
& \text{if }(f\circ\gamma_B)(\overline t_i)>0,\\
\bigl |\gamma_B(t_i)\,\,\gamma_B(t_{i-1})\bigr |_B
\quad & \text{if } (f\circ\gamma_B)(\overline t_i)=0,
\end{cases}
 \]
where $B\,\times\,(f\circ\gamma_B)(\overline t_i)\cdot F$\, denotes the Cartesian product  of $B$ with a scaling of $F$.
 The choice of  $\overline t_i$ ensures the formula is well-defined and any sequence of successively refined   sums is nondecreasing. 
The proof that \,$L=L_{\Sigma}$\,  proceeds  as in the classical case  for length of an absolutely continuous curve (\cite{g}).
 
Since we are assuming $f$ is locally Lipschitz, it can be verified that 
the length induced by the warped product metric agrees with $L$.  

 It turns out that mere continuity of $f$ is not sufficient
 for the warped product  construction, 
 even if $f>0$.  
(This corrects a misstatement in \cite{had-wp,ab-wp}.)
For example, let $B$ be the union of intervals $B_i=[0,1/i]$ glued at $0$, $f(0)= 1$, $(f|B_i)(1/i)=1/i$, $F=[0,1]$. For  $\gamma(t) = (0,t)$, $ 0 \le t \le 1$,  
we have $L=1$, while the infimum of lengths of admissible curves joining any two points of $\gamma$ is $0$.

When $f$  is positive and locally Lipschitz,  the original  and metric topologies agree, and  Definition \ref{def:wp} determines a length structure in the sense of  \cite{bbi}. 
 
\end{rem}
\section{Base and warping function, $\CAT$
}
Let $y=\sn^\kappa$ be the function on $\R$ satisfying
$$y''+\kappa y=0,\quad y(0)=0,\quad y'(0)=1.$$

\begin{thm}
\label{thm:wf-convexity}
\begin{enumerate}
\item \label{d-convex-set}
Suppose $X\in\CAT^\kappa$  and $S\subset X$ is 
$\varpi^{\kappa}$-convex. Then
$$\sn^\kappa\circ \dist_{S}$$
\noindent is a sinusoidally $\kappa$-convex function on $\,\mathcal{B}(S,\varpi^{\kappa}/2)$.

\vspace{2mm}

\item \cite{Pm1}\label{d-bdry}
Suppose  $X\in\CBB^\kappa$ and $\partial X\ne\emptyset$.  Then
$$\sn^\kappa\circ \dist_{\partial X}$$
is a sinusoidally $\kappa$-concave function on $X$.
\end{enumerate}
\end{thm}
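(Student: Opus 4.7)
The plan is to establish both parts by verifying the two-point support characterization of sinusoidal $\kappa$-convexity (for (1)) and $\kappa$-concavity (for (2)) along short unit-speed geodesic subarcs, transferring each estimate to the constant-curvature model surface $\h$, where the ``model majorants'' satisfy the ODE $y''+\kappa y=0$ exactly.

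For part (1), given a unit-speed geodesic $\gamma:[a,b]\to\mathcal{B}(S,\varpi^\kappa/2)$ with $b-a$ small, I would first choose footpoints $s_a,s_b\in S$ realizing $\dist_S(\gamma(a))$ and $\dist_S(\gamma(b))$; these exist and are unique by the metric projection theorem for $\varpi^\kappa$-convex subsets of a $\CAT^\kappa$ space. By the $\varpi^\kappa$-convexity of $S$ and the smallness of $|s_as_b|$, there is a geodesic $\alpha:[a,b]\to S$ from $s_a$ to $s_b$, affinely parametrized, and $\dist_S(\gamma(t))\le d(\gamma(t),\alpha(t))$ throughout, with equality at the endpoints $t=a,b$. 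Next, I would invoke the Reshetnyak-type convexity that in $\CAT^\kappa$ the function $t\mapsto\sn^\kappa(d(\gamma(t),\alpha(t)))$ is sinusoidally $\kappa$-convex along a pair of affinely parametrized geodesics; this follows from majorizing the geodesic quadrilateral $\gamma(a)\gamma(b)s_bs_a$ by a convex quadrilateral in $\h$ with the same side lengths and the corresponding distance-non-increasing filling. Letting $y$ be the solution of $y''+\kappa y=0$ on $[a,b]$ with $y(a)=\sn^\kappa(\dist_S(\gamma(a)))$ and $y(b)=\sn^\kappa(\dist_S(\gamma(b)))$, the two-point support applied to $\sn^\kappa\circ d(\gamma,\alpha)$ gives $\sn^\kappa(d(\gamma(t),\alpha(t)))\le y(t)$, and the monotonicity of $\sn^\kappa$ on $[0,\varpi^\kappa/2]$ then yields $\sn^\kappa(\dist_S(\gamma(t)))\le y(t)$, which is exactly the required two-point support, so $\sn^\kappa\circ\dist_S\in\conv^\kappa$.

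For part (2), attributed to \cite{Pm1}, Perelman's argument proceeds as follows. For $x_0\in\inte X$ and a footpoint $p\in\partial X$ with $|px_0|=\dist_{\partial X}(x_0)$, the first variation formula together with the minimality of $p$ forces $[px_0]$ to meet every direction into $\partial X$ at $p$ at angle $\ge\pi/2$, i.e. $[px_0]$ is generalized-perpendicular to $\partial X$ at $p$. Combining this perpendicularity with $\CBB^\kappa$ angle comparison on triangles $p,\gamma(a),\gamma(b)$ for a unit-speed geodesic $\gamma$ through $x_0$ produces tangential support from above for $\sn^\kappa\circ\dist_{\partial X}\circ\gamma$ by a solution of $y''+\kappa y=0$, which is the sinusoidal $\kappa$-concavity condition. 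A modern alternative uses Perelman's doubling theorem: the double $\hat X:=X\cup_{\partial X}X$ lies in $\CBB^\kappa$, one has $\dist_{\partial X}(x)=\tfrac12 d_{\hat X}(x,x')$ where $x'$ is the reflection of $x$ across $\partial X$, and the desired concavity is then derived from $\CBB^\kappa$ angle comparison applied to pairs of geodesics and their reflections in $\hat X$.

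The main obstacle in (1) is the Reshetnyak-type sinusoidal $\kappa$-convexity of $\sn^\kappa\circ d(\gamma,\alpha)$ for pairs of affinely parametrized geodesics in $\CAT^\kappa$, which requires setting up the majorizing quadrilateral in $\h$ in a Saccheri-type configuration so that the endpoint values match $\dist_S(\gamma(a))$ and $\dist_S(\gamma(b))$. In (2), the analogous subtlety is the possible non-uniqueness of footpoints on $\partial X$ and the non-extendibility of $[px_0]$ across $\partial X$, which must be handled by a first-variation/limit argument or by the doubling maneuver.
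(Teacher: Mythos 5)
The paper does not give its own proof of this statement: it simply cites \cite[\S 3]{ab-cbc} for part (1) and \cite[Theorem 3.3.1]{petrunin_survey} for part (2). So your proposal is being compared with those sources rather than with an argument written out here.

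Your sketch of part (2) is consistent with the known lines of argument (footpoint perpendicularity with $\CBB^\kappa$ angle comparison, or the doubling theorem), and nothing there raises a flag. The problem is in part (1). You invoke as a ``Reshetnyak-type'' lemma the assertion that for any two affinely parametrized geodesics $\gamma,\alpha$ in a $\CAT^\kappa$ space, the function $t\mapsto\sn^\kappa\bigl(d(\gamma(t),\alpha(t))\bigr)$ is sinusoidally $\kappa$-convex. This is false for $\kappa>0$, already in the model surface. On the unit sphere ($\kappa=1$), take $\gamma(t)=(\cos t,\sin t,0)$ and $\alpha(t)=(\cos t,0,\sin t)$, both unit-speed geodesics. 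Then $\cos d(\gamma(t),\alpha(t))=\cos^2 t$, so $\sin d=\sqrt{1-\cos^4 t}=\sqrt{2}\,t-\tfrac{5\sqrt2}{12}t^3+O(t^5)$, whence $y''+y=-\tfrac{3\sqrt2}{2}\,t+O(t^3)<0$ for small $t>0$. So $\sn^1\circ d$ is (locally) sinusoidally $1$-\emph{concave}, not convex, in this configuration. Reshetnyak majorization of the quadrilateral does not rescue the claim, because majorization controls the diagonal distances, not the $\sn^\kappa$ of the ``transversal'' distance $d(\gamma(t),\alpha(t))$ for all intermediate $t$.

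What is missing is precisely the ingredient that makes the proof in \cite{ab-cbc} work: the perpendicularity of the connecting geodesics to $S$ at the footpoints $s_a,s_b$ (the angle comparison at footpoints gives a Saccheri-type quadrilateral with two angles $\ge\pi/2$ at the base), and a sharp comparison for such quadrilaterals in $\h$. You gesture at this in your closing paragraph, but the argument as written leans on a false general lemma and never uses the right angles. There is also a secondary gap: even granting the comparison, you need $d(\gamma(t),\alpha(t))$ and the comparison sinusoid $y(t)$ to remain below $\varpi^\kappa/2$ in order to apply monotonicity of $\sn^\kappa$; membership of $\gamma(t)$ in $\mathcal{B}(S,\varpi^\kappa/2)$ only controls $\dist_S(\gamma(t))$, not $d(\gamma(t),\alpha(t))$. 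Both issues need to be addressed to turn the outline into a proof; the cited \cite[\S 3]{ab-cbc} handles them by working directly with the footpoint configuration and the model-surface comparison functions rather than by an unconstrained quadrilateral majorization.
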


\begin{proof} See  \cite[\S 3]{ab-cbc}, or for  (\ref{d-bdry}), see  \cite[Theorem 3.3.1]{petrunin_survey}.
\end{proof}

The next lemma will allow us often to restrict attention to warped products whose fibers are intervals.

\begin{lem} \label{lem:F=interval} 
Let $(B, f,F)$ be a WP-triple. Suppose $B \times_fF\in\CAT^\kappa$ or $B \times_fF\in\CBB^\kappa$ respectively.

\begin{enumerate}
\item\label{F=int:2-parameter}
Let $\beta: J\to F$ be a unit-speed  geodesic.  Then under the embedding
$$\id\times\beta: B\times_fJ\to  B \times_fF,$$  the intrinsic and extrinsic metrics of $B\times_fJ$ agree.

\item\label{F=int} 
There is a nontrivial  interval $J$ such that $B \times_fJ\in\CAT^\kappa$ or $B \times_fJ\in\CBB^\kappa$ respectively.
\end{enumerate}
\end{lem}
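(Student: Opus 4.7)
The plan is to derive both parts from the Fiber Independence proposition (Proposition \ref{prop:fiber-independence}).

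\emph{Part (\ref{F=int:2-parameter}).} Since $\beta$ is unit-speed, $|\beta(s)\,\beta(t)|_F = |s-t|_J$ for all $s,t\in J$. Applying Proposition \ref{prop:fiber-independence}(\ref{dist-fiber-indep}) with $F^* = J$, $\varphi^* = s$, $\psi^* = t$, $\varphi = \beta(s)$, $\psi = \beta(t)$ gives
\[
|(p,\beta(s))\,(q,\beta(t))|_{B\times_f F} = |(p,s)\,(q,t)|_{B\times_f J}
\]
for all $p,q \in B$ and $s,t\in J$. This is precisely the assertion that the intrinsic metric of $B \times_f J$ coincides with the metric induced from $B\times_f F$ via $\id\times\beta$.

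\emph{Part (\ref{F=int}).} The main task is to exhibit a nontrivial unit-speed geodesic $\beta : J \to F$; once we have this, part (\ref{F=int:2-parameter}) produces an isometric copy of $B\times_f J$ inside $B\times_f F$, and the curvature bound transfers. To produce $\beta$: since $Z=f^{-1}(0)$ is closed and $Z\ne B$, pick $p_0\in B\setminus Z$ and set $d=\dist_Z(p_0)>0$. Since $F\ne$ point, pick $\varphi \ne \psi$ in $F$ with $|\varphi\,\psi|_F$ so small that $f(p_0)\cdot |\varphi\,\psi|_F < \min\{2d,\,\varpi^\kappa\}$. A vertical path of length $f(p_0)\cdot|\varphi\,\psi|_F$ bounds the distance $|(p_0,\varphi)\,(p_0,\psi)|_{B\times_f F}$ above, while any admissible curve passing through $Z$ has length at least $2d$ because the projection $\pi_B:B\times_f F\to B$ is $1$-Lipschitz and each half has $B$-length at least $d$. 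Consequently any minimizer $\gamma=(\gamma_B,\gamma_F)$ joining $(p_0,\varphi)$ to $(p_0,\psi)$ — existence follows from $\varpi^\kappa$-geodesicity in the $\CAT$ case and geodesicity in the $\CBB$ case — must satisfy $f\circ\gamma_B>0$. By Proposition \ref{prop:fiber-independence}(\ref{geo-fiber-proj}), $\gamma_F$ is then a pregeodesic in $F$; reparametrizing yields the desired unit-speed geodesic $\beta:J\to F$ on a compact nontrivial interval.

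By part (\ref{F=int:2-parameter}), $\id\times\beta$ embeds $B\times_f J$ isometrically into $B\times_f F$ with closed image (since $\beta(J)$ is compact, hence closed in $F$). Therefore $B\times_f J$ is complete, being isometric to a closed subset of the complete ambient space; it is intrinsic by construction; every quadruple in $B\times_f J$ corresponds to a quadruple in $B\times_f F$ under the embedding, so inherits the $(2+2)^\kappa$ (resp.\ $(1+3)^\kappa$) condition; and finite dimension in the $\CBB$ case is inherited from the ambient. Thus $B\times_f J\in\CAT^\kappa$ (resp.\ $\CBB^\kappa$). The main obstacle is the quantitative step producing $\beta$: one must choose $|\varphi\,\psi|_F$ small enough to force the ambient minimizer to avoid $Z$ entirely, so that Fiber Independence applies to project it to a pregeodesic in $F$.
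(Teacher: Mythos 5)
Your argument is correct. For part (\ref{F=int:2-parameter}), you invoke Proposition~\ref{prop:fiber-independence}(\ref{dist-fiber-indep}), the distance form of fiber independence, whereas the paper invokes Proposition~\ref{prop:fiber-independence}(\ref{geo-fiber-indep}) together with Proposition~\ref{prop:f=0} and asserts that $\id\times\beta$ ``preserves geodesics.'' Your route is more direct: the distance form immediately yields $|(p,\beta(s))\,(q,\beta(t))|_{B\times_fF}=|(p,s)\,(q,t)|_{B\times_fJ}$, which is exactly the claim, whereas the geodesic-preservation route tacitly requires $B\times_fJ$ to be geodesic (not merely intrinsic) before preserved geodesics can be converted into equal distances. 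For part (\ref{F=int}), the paper tersely asserts that $F$ is $\varpi^\kappa$-geodesic (resp.\ geodesic) and concludes; your quantitative step --- choosing $|\varphi\,\psi|_F$ so small that any minimizer from $(p_0,\varphi)$ to $(p_0,\psi)$ is forced off $Z$, after which Proposition~\ref{prop:fiber-independence}(\ref{geo-fiber-proj}) produces a genuine nontrivial pregeodesic in $F$ --- is exactly the detail the paper leaves to the reader, and it also shows one only needs a single nontrivial geodesic in $F$ rather than full geodesicity. You additionally make explicit the transfer of completeness (via closedness of the image, using compactness of $\beta(J)$) and of the four-point comparison to $B\times_fJ$ under the isometric embedding, steps the paper leaves implicit.
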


\begin{proof}
By Proposition \ref{prop:fiber-independence}\,(\ref{geo-fiber-indep}) and Proposition \ref{prop:f=0}, the map 
$\id\times\beta$ preserves geodesics.  Therefore (\ref{F=int:2-parameter}) holds. 

By Proposition   \ref{prop:fiber-independence}\,(\ref{geo-fiber-proj}), $F$ is $\varpi^\kappa$-geodesic if $B \times_fF\in\CAT^\kappa$, and $F$ is geodesic if  $B \times_fF\in\CBB^\kappa$.
Therefore (\ref{F=int}) follows from (\ref{F=int:2-parameter}) and the assumption $F\ne$ point.
 \end{proof}

\begin{lem}\label{lem:f-lim}
If $W=B\times_f J$ is a geodesic space, where $J$ is an interval with interior point $0$, then the warping function $f:B\to\R_{\ge 0}$ satisfies
$$f(p) = \lim_{\epsilon\to 0}\frac{(\sn^\kappa \circ  \dist_{B\times\{0\}})((p,\epsilon))}{\epsilon}.$$
\end{lem}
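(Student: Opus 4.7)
The plan is to reduce the claim to computing $\lim_{\epsilon\to 0^+} d(\epsilon)/\epsilon$, where $d(\epsilon) = \dist_{B\times\{0\}}((p,\epsilon))$, and then to match upper and lower bounds. Since $\sn^\kappa(x)/x \to 1$ as $x \to 0^+$ and $d(\epsilon)\to 0$, one has $\sn^\kappa(d(\epsilon))/\epsilon = \bigl(\sn^\kappa(d(\epsilon))/d(\epsilon)\bigr)\cdot\bigl(d(\epsilon)/\epsilon\bigr)$, so the two limits coincide. The case $f(p)=0$ is trivial: by Definition \ref{def:wp} the entire fiber over $p$ is collapsed, so $(p,\epsilon)$ and $(p,0)$ represent the same point of $W$, and both sides equal $0$. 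So one may assume $f(p)>0$.

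For the upper bound, I would use the admissible vertical curve $\gamma(t) = (p, t\epsilon)$ for $t\in[0,1]$. Here $v_B\equiv 0$ and $v_F\equiv\epsilon$, so the length formula (\ref{eq:length}) gives $\length\gamma = \int_0^1 f(p)\cdot\epsilon\,dt = f(p)\epsilon$. Hence $d(\epsilon)\le f(p)\epsilon$, and $\limsup d(\epsilon)/\epsilon \le f(p)$.

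For the lower bound, fix $\delta\in(0,f(p))$. Since $f$ is locally Lipschitz with $f(p)>0$, there exists $r>0$ with $f \ge f(p)-\delta$ on $\overline{\mathcal{B}}(p,r)$. Choose $\epsilon_0>0$ with $f(p)\epsilon_0 < r$. For $\epsilon<\epsilon_0$, any admissible curve $\gamma = (\gamma_B,\gamma_F)$ from $(p,\epsilon)$ to some point of $B\times\{0\}$ with $\length\gamma \le f(p)\epsilon$ satisfies $\length\gamma_B < r$, so $\gamma_B$ stays inside $\overline{\mathcal{B}}(p,r)$; in particular $f\circ\gamma_B>0$ throughout, hence $J_0=\emptyset$. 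The length formula then yields
\[
\length\gamma \;\ge\; \int_J (f\circ\gamma_B)\cdot v_F\,dt \;\ge\; (f(p)-\delta)\int_J v_F\,dt \;\ge\; (f(p)-\delta)\,\epsilon,
\]
where the last step uses $\int_J v_F\,dt = \length\gamma_F \ge |\gamma_F(0)-\gamma_F(1)| = \epsilon$. Taking the infimum over such $\gamma$ gives $d(\epsilon)\ge(f(p)-\delta)\epsilon$, and letting $\delta\to 0$ yields $\liminf d(\epsilon)/\epsilon \ge f(p)$.

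The only real subtlety is ensuring that near-minimizing curves project to a small ball around $p$; this is forced by the elementary inequality $\length\gamma_B\le\length\gamma$ together with the already-established bound $d(\epsilon)\le f(p)\epsilon$. The same argument applies verbatim for $\epsilon\to 0^-$ if $J$ extends below $0$.
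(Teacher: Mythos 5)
Your proof is correct, but it takes a genuinely different route from the paper's. The paper establishes the identity via a structural argument: Proposition \ref{prop:wp-proj}\,(\ref{vert-leaf-proj}) shows the vertical curve $\beta(t)=(p,t)$ has arc-length $t=f(p)s$, Proposition \ref{prop:fiber-independence}\,(\ref{dist-fiber-indep}) upgrades the a.e.\ speed to an everywhere speed, and then a symmetry/reflection argument on a minimizer from $(p,\epsilon)$ to $B\times\{0\}$ identifies $\dist_{B\times\{0\}}((p,\epsilon))$ with $\tfrac12|(p,\epsilon)\,(p,-\epsilon)|$. You instead read off two-sided bounds directly from the length formula (\ref{eq:length}): the vertical competitor gives $d(\epsilon)\le f(p)\,\epsilon$, and the lower bound comes from localizing near $p$ (using local Lipschitzness of $f$ to bound $f\ge f(p)-\delta$ on a small ball and to force near-minimizers to stay there) together with the observation that the fiber coordinate of any curve from $(p,\epsilon)$ to $B\times\{0\}$ has displacement $\epsilon$. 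This is more elementary and slightly more robust: you never need a minimizing geodesic to exist, only near-minimizers, so in fact you don't even use the hypothesis that $W$ is geodesic -- only that it is intrinsic. Two small points. First, when you take "the infimum over such $\gamma$" you should note explicitly that curves with $\length\gamma > f(p)\epsilon$ trivially already satisfy the lower bound, so the restriction to short competitors is harmless. Second, for $\epsilon<0$ the displayed quotient is nonpositive while $f(p)\ge0$, so the limit in the lemma is implicitly $\epsilon\to 0^+$ (as in the paper's own proof); your closing remark about $\epsilon\to 0^-$ isn't needed and, taken literally, doesn't parse.
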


\begin{proof}
By Proposition \ref{prop:wp-proj}\,(\ref{vert-leaf-proj}),  
for $p\in B$ the  curve $\beta(t)= (p,t)$ in $W$  is rectifiable, with arc-length  parameter $s$ satisfying $t=f(p)s$.  Then the speed of $\beta$ exists and equals $f(p)$ almost everywhere.  
By Proposition \ref{prop:fiber-independence}\,(\ref{dist-fiber-indep}) (taking $F=F^*=J$ and $p=q$),  if
 two points of  $\{p\} \times J$ have the same intrinsic distance in $\{p\} \times J$ then they have the same extrinsic distance in $W$.  Thus the speed of $\beta$  is constantly $f(p)$ everywhere, and we have 
\begin{align*}
f(p) 
=\lim_{\epsilon\to 0} \frac{|(p,\epsilon)\ (p,-\epsilon)|}{2\epsilon}.
\end{align*}

A geodesic $\gamma$  realizing the distance from  $(p, \epsilon)$
to $B \times \{0\}$ has a symmetric extension, 
which is a geodesic  between $(p, \epsilon)$ and $(p, -\epsilon)$ since $\gamma$ cannot be shortened.  Thus
\begin{align}\label{eq:flimit}
f(p) 
= \lim_{\epsilon\to 0}\frac{ \ \dist_{B\times\{0\}}((p,\epsilon))}{\epsilon}.
\end{align}
Since ${\sn^\kappa}'(0) = 1$, the lemma follows.
\end{proof}

\begin{thm}[Theorem \ref{thm:mainA}\,(\ref{mainA:base-f})]
\label{thm:A:base-f} 
Let $(B, f,F)$ be a WP-triple.  Set $Z=f^{-1}(0)$. If $B \times_fF\in\CAT^\kappa$, then $B\in\CAT^\kappa$ and $f\in\conv^\kappa$.  
\end{thm}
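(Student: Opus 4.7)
By Lemma \ref{lem:F=interval}(\ref{F=int}), I would first reduce to the case where $F=J$ is a nontrivial interval, so $W:=B\times_f J\in\CAT^\kappa$. For $B\in\CAT^\kappa$, Proposition \ref{prop:wp-proj}(\ref{horiz-leaf}) identifies $B$ isometrically with the horizontal leaf $B\times\{0\}\subset W$; every quadruple in $B$ then lifts to a quadruple in $W$ with identical pairwise distances and thereby inherits $(2+2)^\kappa$. Intrinsicness of $B$ is part of the WP-triple hypothesis, and completeness of $B$ follows from completeness of $W$ via the $1$-Lipschitz base projection $\pi:W\to B$: a Cauchy sequence $(p_n)\subset B$ lifts to a Cauchy sequence $((p_n,0))\subset W$, whose limit $x\in W$ projects to $\pi(x)\in B$, and $p_n=\pi(p_n,0)\to \pi(x)$.

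For $f\in\conv^\kappa$, the engine is Lemma \ref{lem:f-lim}, which exhibits $f$ as $f(p)=\lim_{\epsilon\to 0}\sn^\kappa\bigl(\dist_{B\times\{0\}}(p,\epsilon)\bigr)/\epsilon$. The plan is to fix a unit-speed $B$-geodesic $\gamma:[a,b]\to B$ with $f\circ\gamma>0$ and let $\tilde\gamma_\epsilon(t):=(\gamma(t),\epsilon)$. Its length equals $b-a$ by \eqref{eq:length}, and this matches $|(\gamma(a),\epsilon)\,(\gamma(b),\epsilon)|_W$, which equals $|\gamma(a)\,\gamma(b)|_B = b-a$ by Proposition \ref{prop:fiber-independence}(\ref{dist-fiber-indep}) (taking $F^*$ a single point) together with Proposition \ref{prop:wp-proj}(\ref{horiz-leaf}); so $\tilde\gamma_\epsilon$ is a geodesic of $W$. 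Provided Theorem \ref{thm:wf-convexity}(\ref{d-convex-set}) applies with $S=B\times\{0\}$, the function $h_\epsilon:=\sn^\kappa\circ \dist_S$ is sinusoidally $\kappa$-convex along $\tilde\gamma_\epsilon$. Dividing the resulting two-point support inequalities by $\epsilon$ and passing to the pointwise limit $\epsilon\to 0$ --- using continuous dependence of solutions of $y''+\kappa y=0$ on their boundary data --- would yield $(f\circ\gamma)''+\kappa(f\circ\gamma)\ge 0$ in the generalized sense.

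The main obstacle is the hypothesis of Theorem \ref{thm:wf-convexity}(\ref{d-convex-set}), which demands that $B\times\{0\}$ be $\varpi^\kappa$-convex in $W$. When $Z=\emptyset$ this is immediate: by Proposition \ref{prop:fiber-independence}(\ref{geo-fiber-proj}), any geodesic between leaf points has its $F$-component a pregeodesic of $J$, hence monotone; starting and ending at $0$, it must vanish identically. When $Z\ne\emptyset$, however, Proposition \ref{prop:f=0}(\ref{f=0-fiber-projection}) permits the $F$-component of a leaf-to-leaf geodesic to take arbitrary constant values on interior subintervals where $f$ vanishes, so the leaf is only weakly convex. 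I would overcome this by localizing: sinusoidal $\kappa$-convexity of $f\circ\gamma$ at $t_0$ with $f(\gamma(t_0))>0$ requires only the argument in a small neighborhood of $\tilde\gamma_\epsilon(t_0)$ in $W$, chosen so that $f$ is bounded below there; in such a neighborhood no relevant geodesic between leaf points can dip into $Z$, the obstruction disappears, and a local form of Theorem \ref{thm:wf-convexity}(\ref{d-convex-set}) applies. Sinusoidal $\kappa$-convexity at $t_0$ with $f(\gamma(t_0))=0$ then follows from the positive case by a continuity argument: on short intervals $[t_1,t_2]\ni t_0$ sitting in $\{f\circ\gamma>0\}\setminus\{t_0\}$, the solution $y$ interpolating $f\circ\gamma$ at the endpoints is positive, hence dominates $f\circ\gamma$ at $t_0$, and dominates it on each side of $t_0$ by the sinusoidal convexity already established off $Z$.
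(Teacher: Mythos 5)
Your outline matches the paper's: reduce to interval fiber via Lemma \ref{lem:F=interval}(\ref{F=int}), identify $B$ with a horizontal leaf to inherit $(2+2)^\kappa$, and use Lemma \ref{lem:f-lim} to transfer sinusoidal $\kappa$-convexity of $\sn^\kappa\circ\dist_{B\times\{0\}}$ (Theorem \ref{thm:wf-convexity}(\ref{d-convex-set})) to $f$. Your treatment of $B\in\CAT^\kappa$, including completeness via the $1$-Lipschitz projection, is fine.

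Where you diverge, you have misdiagnosed an obstacle. You assert that when $Z\ne\emptyset$, Proposition \ref{prop:f=0}(\ref{f=0-fiber-projection}) lets the $F$-component of a leaf-to-leaf geodesic wander off, so $B\times\{0\}$ is ``only weakly convex.'' But the hypothesis $W=B\times_fJ\in\CAT^\kappa$ forbids exactly this: at distance $<\varpi^\kappa$ geodesics in $W$ are unique. A $B$-geodesic $\alpha$ from $p$ to $q$ lifts to a horizontal curve $(\alpha,\epsilon)$ whose length equals $|p\,q|_B=|(p,\epsilon)\,(q,\epsilon)|_W$ (by Proposition \ref{prop:wp-proj}(\ref{horiz-leaf})), so it is a geodesic of $W$, and by uniqueness it is \emph{the} geodesic. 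Hence each leaf $B\times\{\epsilon\}$ is $\varpi^\kappa$-convex, with no case distinction on $Z$. (Had a geodesic with a free interior constant existed, uniqueness would already be violated, so Proposition \ref{prop:f=0}(\ref{f=0-fiber-projection}) does not produce such geodesics under the $\CAT^\kappa$ hypothesis.) This one observation is the paper's entire argument for leaf convexity, and it makes your detour unnecessary.

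The detour itself also has gaps. You invoke ``a local form'' of Theorem \ref{thm:wf-convexity}(\ref{d-convex-set}); the cited theorem is stated only for a $\varpi^\kappa$-convex $S$, and a localized version would need to be formulated and proved (you would have to show, for instance, that $B\times\{0\}$ intersected with a small ball is $\varpi^\kappa$-convex in that ball, which is close to the fact you were trying to avoid). Second, your continuity argument at a point $t_0$ with $f(\gamma(t_0))=0$ assumes a punctured neighborhood of $t_0$ lies in $\{f\circ\gamma>0\}$, i.e., $t_0$ is an isolated zero of $f\circ\gamma$. Non-isolated zeros are not covered: in the interior of the zero set the two-point support inequality is trivial, but at a boundary point of the zero set your argument as written does not apply (one side of $t_0$ is not ``off $Z$''), so the convexity you invoke is unavailable on that side and an extra limiting argument is needed. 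Replacing the detour with the paper's uniqueness observation removes both problems at once.
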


\setcounter{claim}{0}

\begin{proof}
Proposition \ref{prop:wp-proj} (\ref{horiz-leaf}) implies 
$B\in\CAT^\kappa$.

By Lemma \ref{lem:F=interval}\,(\ref{F=int}),  we may assume $F$ is a non-trivial interval $J=[-\theta_0,\theta_0]$. 
Since any two points at distance $<\varpi^{\kappa}$ in  $B \times_fJ$ are joined by a  unique geodesic, Proposition \ref{prop:wp-proj}\,(\ref{horiz-leaf}) 
 implies that each horizontal leaf  $B\times\{\epsilon\}$ is a $\varpi^{\kappa}$-convex subset of $B \times_fJ$.  
By Theorem \ref{thm:wf-convexity}\,(\ref{d-convex-set}),  $\sn^\kappa\circ\, \dist_{B\times\{0\}}$ is  sinusoidally $\kappa$-convex on  the tubular neighborhood \,$\mathcal{B}\left(B\times\{0\},\,\varpi^{\kappa}/2\right)$, and hence on a neighborhood of $(p,\epsilon)$ in $B\times\{\epsilon\}$ for $\epsilon$ sufficiently small.
By  Lemma \ref{lem:f-lim} and Proposition \ref{prop:wp-proj}\,(\ref{horiz-leaf}),  $f\in\conv^\kappa$. 
 \end{proof}

\section{Base and warping function, $\CBB$
} 
\label{sec:dagger}

Recall that  we write $\overline p\in B\times_fF$ when $f(p)=0$, where $\overline p$  is the equivalence class $\{(p,\varphi):\varphi\in F\}$.

The  next lemma contains what we need in this paper about tangent cones of warped products.

\begin{lem}\label{lem:tan-cone}
Let $(B, f,F)$ be a WP-triple, and $J$ be a  closed interval.  
\begin{enumerate}

\item
\label{mainB-base}
Suppose $B \times_f F\in \CBB^\kappa$.    Then 
$B\in\CBB^\kappa$.

\item
\label{diff}
Suppose $B \times_f F\in \CBB^\kappa$.  If $f( p ) =0$, then ${\diff}_pf$ is defined  and 
 $$\tang_{\overline p}(B\,\times_fF)\  =\  \tang_p\!B\times_{{\diff}_pf}F,$$
$$\dirn_{\overline p}(B\times_fF) \ = \ \dirn_p\!B\times_{({\diff}_pf|\dirn_pB)}F.
$$

\item\label{f>0-tan} 
Suppose $B \times_f J\in \CBB^\kappa$. If $f(p) > 0$, then 
$$\tang_{(p,\varphi)}(B\times_fJ) \ = \ 
\begin{cases}
\tang_p\!B\times \R_{\ge 0},\quad & \text{if $\varphi =$ endpoint of $J$},\\
\tang_p\!B\times \R,\quad & \text{if $\varphi$ = interior point of $J$}.
\end{cases}
$$
$$\Sigma_{(p,\varphi)}(B\times_fJ)=
\begin{cases}
[0,\pi/2]\times_{\sin\circ\id}\Sigma_pB,\quad & \text{if $\varphi =$ endpoint of $J$},\\
[0,\pi]\times_{\sin\circ\id}\Sigma_pB,\quad & \text{if $\varphi$ = interior point of $J$}.
\end{cases}
$$
\end{enumerate}
\end{lem}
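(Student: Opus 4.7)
\emph{Plan for part (1).} Embed $B$ isometrically as a horizontal leaf via Proposition \ref{prop:wp-proj}(\ref{horiz-leaf}). The leaf is closed in the complete space $B\times_f F$, hence complete; the $(1+3)^\kappa$-comparison depends only on distances, so it transfers from the warped product to $B$; finite-dimensionality and intrinsicness pass to subspaces; and, for $\kappa>0$, $\diam B\le\diam(B\times_f F)\le\varpi^\kappa$ excludes long intervals and circles. Thus $B\in\CBB^\kappa$.

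\emph{Plan for part (2).} Apply Lemma \ref{lem:blowup} at $\overline p$, together with the scaling identity
\[
\lambda\cdot(B\times_fF)\ =\ (\lambda B)\times_{\lambda f}F,
\]
which is immediate from the length formula (\ref{eq:length}). Lemma \ref{lem:blowup} gives $(\lambda B,p)\to(\tang_pB,o_p)$ in pointed GH sense. Since $f$ is locally Lipschitz with $f(p)=0$, the family $\{\lambda f\}$ on $(\lambda B,p)$ is equi-Lipschitz and equi-bounded on bounded sets, so by Arzela--Ascoli a subsequence converges uniformly on compact sets to a Lipschitz function $f_\infty:\tang_pB\to\R_{\ge 0}$ vanishing at $o_p$. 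A curve-by-curve comparison in the length formula (\ref{eq:length}) shows that uniform convergence of the warping functions plus pointed GH-convergence of the bases yields pointed GH-convergence of the warped products, identifying the limit with $\tang_pB\times_{f_\infty}F$. Uniqueness of the tangent cone then forces $f_\infty$ to be subsequence-independent. Tracking the blow-up along a horizontal leaf, the limit on a cone ray $t\cdot\gamma^+(0)$ equals $t\cdot\lim_{s\to 0^+}(f\circ\gamma)(s)/s$, so this one-sided derivative exists for every geodesic $\gamma$ from $p$ in $B$; hence $\diff_pf$ is defined and coincides with $f_\infty$. The direction-space identity follows from the cone formula
\[
\cone(\dirn_pB)\times_{\diff_pf}F\ =\ \cone\bigl(\dirn_pB\times_{(\diff_pf|\dirn_pB)}F\bigr),
\]
valid because $\diff_pf$ is linearly homogeneous on $\tang_pB$; this is a direct length-formula comparison.

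\emph{Plan for part (3).} Here $f(p)>0$, so $f$ is bounded below by a positive constant on a neighborhood of $p$. I would use the alternative scaling identity
\[
\lambda\cdot(B\times_fJ)\ =\ (\lambda B)\times_f(\lambda J),
\]
also immediate from (\ref{eq:length}). Both $(\lambda B,p)$ and $(\lambda J,\varphi)$ converge in pointed GH sense to their tangent cones, the latter being $\R_{\ge 0}$ at an endpoint of $J$ and $\R$ in the interior, while on bounded sets $f$ converges uniformly to the constant $f(p)$. Passing to the limit as in part (2) yields $\tang_pB\times_{f(p)}\tang_\varphi J$, which is isometric to the Cartesian product $\tang_pB\times\tang_\varphi J$ since $\tang_\varphi J$ is $\R$ or $\R_{\ge 0}$, both self-similar under positive scaling. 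The direction-space formulas then follow from the standard spherical-join identities $\dirn_pB * S^0=[0,\pi]\times_{\sin\circ\id}\dirn_pB$ and $\dirn_pB *\{\mathrm{pt}\}=[0,\pi/2]\times_{\sin\circ\id}\dirn_pB$.

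\emph{Main obstacle.} Part (2) is the delicate step: transporting the warped-product structure to the GH limit while simultaneously establishing the existence and formula for $\diff_pf$ requires careful use of the length formula (\ref{eq:length}), with the Lipschitz control on $\{\lambda f\}$ supplying the necessary equicontinuity. Parts (1) and (3) are essentially routine applications of the tools already in place.
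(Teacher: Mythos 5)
Your proposal follows essentially the same route as the paper's proof: part (1) via the isometric horizontal-leaf embedding together with a diameter bound for the small-dimensional case; part (2) via the scaling identity $\lambda(B\times_fF)=(\lambda B)\times_{\lambda(f\circ i_\lambda)}F$ and Lemma \ref{lem:blowup}; and part (3) via the alternate scaling identity $\lambda(B\times_fJ)=(\lambda B)\times_{f\circ i_\lambda}(\lambda J)$ and self-similarity of $\R$, $\R_{\ge 0}$ under positive scaling. You supply more detail than the paper (in particular, the GH-Arzel\`a--Ascoli extraction of $f_\infty$ and the argument that the limit is again a warped product, which the paper leaves implicit when it asserts that existence of the blow-up limit already forces existence of $(f\circ\alpha)^+(0)$), but the decomposition and the key tools are identical.
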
 

\begin{proof}

(\ref{mainB-base}) follows from Proposition \ref{prop:wp-proj}\,(\ref{horiz-leaf}).   
When $\dim B=1$, we additionally use  $\diam (B \times_f F)\le\varpi^\kappa$.

(\ref{diff}) By the arc-length formula (\ref{eq:length}),$$\lambda(B\times_fF) = (\lambda B)\times_{ \lambda(f\circ i_\lambda)}F,
$$
where $i_\lambda:\lambda X\to X$ is the tautological map.

By Lemma \ref{lem:blowup}, 
$$
\bigl(\tang_{\overline p}(B\times_fF),o_{\overline p}\bigr) =\lim_{\lambda\to\infty} \bigl((\lambda B)\times_{ \lambda(f\circ i_\lambda)}F ,\,\overline p\,\bigr).
$$
The existence of this limit implies that $(f\circ\alpha)^+(0)$ exists for every geodesic $\alpha$ of $B$ with $\alpha(0)=p$. (\ref{diff}) follows.

(\ref{f>0-tan}) We may also write
$$\lambda(B\times_fF) = (\lambda B)\times_{f\circ i_\lambda}( \lambda F).
$$
Thus we obtain (\ref{f>0-tan}), e.g.  when  $f(p) >  0$ and $\varphi $ is an interior point of $J$,
$$T_{(p,\varphi)}(B\times_fJ) =\lim_{\lambda\to\infty} \bigl((\lambda B)\times_{f\circ i_\lambda}( \lambda J) ,\,(p,\varphi)\,\bigr))=T_pB \times (f( p )\R)= T_pB \times \R.$$
\end{proof}

\begin{thm}[Theorem \ref{thm:mainB}\,(\ref{mainB:base-f})]\label{thm:mainB:base-f}
Let $(B, f,F)$ be a WP-triple. Suppose $B \times_f F\in \CBB^\kappa$. Then for $Z=f^{-1}(0)$:
 \begin{enumerate}
 \item
$B\in\CBB^\kappa$,

\item\label{Z-in-bdry}
$Z\subset\partial B$,
 
\item\label{mainB-f}
$f\in\conc^\kappa$.
\end{enumerate}
\end{thm}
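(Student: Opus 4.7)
\emph{Plan.} I would handle (1) directly, prove (3) via a boundary-distance argument, and deduce (2) through a tangent-cone obstruction.

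Part (1) is immediate from Lemma~\ref{lem:tan-cone}\,(\ref{mainB-base}). For (3), I would reduce via Lemma~\ref{lem:F=interval}\,(\ref{F=int}) to $F=J=[-\theta_0,\theta_0]$, so $W:=B\times_f J\in\CBB^\kappa$. By Lemma~\ref{lem:tan-cone}\,(\ref{f>0-tan}), at $(p,\pm\theta_0)$ with $f(p)>0$ the space of directions is the half-cone $[0,\pi/2]\times_{\sin}\dirn_pB$, whose boundary is nonempty, so $B\times\{\pm\theta_0\}\subset\partial W$. Given a unit-speed geodesic $\alpha$ of $B$ in the interior of $B$ and missing $Z$, the length formula (\ref{eq:length}) shows that every admissible curve from $(\alpha(t_0),0)$ to $(\alpha(t_1),0)$ has length at least $|\alpha(t_0)\,\alpha(t_1)|_B$, with equality forced when $\gamma_F\equiv 0$ and $\gamma_B=\alpha$; hence $\gamma(t):=(\alpha(t),0)$ is a geodesic of $W$. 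Theorem~\ref{thm:wf-convexity}\,(\ref{d-bdry}) applied along $\gamma$ yields that $h_{\theta_0}(t):=\sn^\kappa\circ\dist_{\partial W}(\gamma(t))$ is sinusoidally $\kappa$-concave in $t$. For small $\theta_0$ and $\alpha$ bounded away from $\partial B$, the nearest boundary point to $\gamma(t)$ lies on $B\times\{\pm\theta_0\}$, and the argument of Lemma~\ref{lem:f-lim} gives $\dist_{\partial W}(\gamma(t))=\theta_0\,f(\alpha(t))+o(\theta_0)$ uniformly in $t$. Dividing $h_{\theta_0}$ by $\theta_0$ and letting $\theta_0\to 0$ presents $f\circ\alpha$ as a uniform limit of sinusoidally $\kappa$-concave functions and hence itself sinusoidally $\kappa$-concave. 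Geodesics touching $\partial B$ or entering $Z$ are then handled by continuity of $f$ together with the two-piece structure of Proposition~\ref{prop:f=0}\,(\ref{f=0-base-projection}).

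For (2), suppose for contradiction that $p\in Z$ lies in the interior of $B$. By Lemma~\ref{lem:tan-cone}\,(\ref{diff}),
\[
\tang_{\overline p}W \ =\ \tang_pB\times_{{\diff}_pf}F.
\]
Since $f\ge 0$ attains its minimum value $0$ at $p$, one has ${\diff}_pf\ge 0$ on $\tang_pB$; by Theorem~\ref{thm:gradient}(i) it is also concave. Because $p\notin\partial B$, the direction space $\dirn_pB\in\CBB^1$ has no boundary and positive dimension, so every $v\in\dirn_pB$ admits an antipode $v'$ at angular distance $\pi$, and the pair determines a geodesic line through $o_p$ in $\tang_pB$. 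Concavity of ${\diff}_pf$ at the midpoint $o_p$ combined with ${\diff}_pf(o_p)=0$ forces $\tfrac12({\diff}_pf(v)+{\diff}_pf(v'))\le 0$, hence ${\diff}_pf(v)={\diff}_pf(v')=0$. By linear homogeneity ${\diff}_pf\equiv 0$, so the tangent warped product collapses to $\tang_pB$ and $\dim\tang_{\overline p}W=\dim B<\dim B+\dim F=\dim W$, contradicting the preservation of dimension under tangent cones in finite-dimensional $\CBB^\kappa$.

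\emph{Main obstacle.} In (3), the asymptotic identification $\dist_{\partial W}(\gamma(t))=\theta_0\,f(\alpha(t))+o(\theta_0)$ is the technical heart of the proof: one must rule out, for small $\theta_0$, that competing boundary components (parts of $\partial B\times J$ or the identifications at $Z$) are closer than the vertical caps $B\times\{\pm\theta_0\}$. This is purely local and can be arranged by restricting to geodesics $\alpha$ that stay a definite distance from $\partial B\cup Z$, reducing the estimate to a one-sided analog of Lemma~\ref{lem:f-lim}.
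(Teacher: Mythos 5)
Part (1) coincides with the paper's. Part (3) is a recognizable variant of the paper's Claim~2: you fix the evaluation height at $0$ in the full warped product $B\times_f[-\theta_0,\theta_0]$ and shrink $\theta_0$, whereas the paper works in the half $W=B\times_f[0,\theta_0]$ at fixed $\theta_0$ and shrinks the evaluation height $\epsilon$, measuring distance to $B\times\{0\}$. Both rest on Theorem~\ref{thm:wf-convexity}\,(\ref{d-bdry}) and Lemma~\ref{lem:f-lim}. Your version needs the small extra check that $B\times_f[-\theta_0',\theta_0']$ is convex in $B\times_f[-\theta_0,\theta_0]$ for $\theta_0'<\theta_0$ (true because the fiber projection of a geodesic is a pregeodesic, hence monotone, in the interval). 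More seriously, your ordering is backwards: you put (3) before (2), but for a geodesic $\alpha$ with an \emph{interior} zero of $f\circ\alpha$ and positive values on both sides, one-sided concavity plus continuity does not give concavity; this configuration must simply be \emph{excluded}, and that exclusion is precisely $Z\subset\partial B$. The paper's ordering — local concavity off $Z\cup\partial B$, then $Z\subset\partial B$, then global concavity — is what makes this work.

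The genuine gap is in (2). You claim that, since $\dirn_pB$ is boundaryless and positive-dimensional, "every $v\in\dirn_pB$ admits an antipode $v'$ at angular distance $\pi$, and the pair determines a geodesic line through $o_p$ in $\tang_pB$." This is false. Take $B$ a $\CBB^0$ surface with a conical singularity of total angle $\theta<2\pi$ at $p$. Then $\dirn_pB$ is a circle of length $\theta$ — boundaryless — but its diameter is $\theta/2<\pi$, so no antipode at distance $\pi$ exists and no geodesic line of $\tang_pB$ passes through $o_p$. The midpoint-concavity step therefore does not apply. There is a second, independent problem: you cite Theorem~\ref{thm:gradient}\,(i) for concavity of ${\diff}_pf$, but that theorem presupposes $f$ semiconcave near $p$; at $p\in Z$ this is exactly what has not yet been established (even Claim~2 of the paper, or your (3), only gives concavity away from $Z\cup\partial B$). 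The paper resolves both issues at once by induction on $\dim B$: it applies its Claim~2 to the one-lower-dimensional warped product $\dirn_pB\times_{({\diff}_pf|\dirn_pB)}J\in\CBB^1$ to conclude $({\diff}_pf|\dirn_pB)\in\conc^1$; the induction hypothesis gives $({\diff}_pf|\dirn_pB)>0$ on the boundaryless $\dirn_pB$; compactness then forces a positive minimum; and extending a geodesic to that minimum as a \emph{quasigeodesic} (quasigeodesics always extend, geodesics need not) drives the $\conc^1$ function negative, a contradiction. If you replace your geodesic line through $o_p$ by a quasigeodesic through $o_p$ and obtain concavity of ${\diff}_pf|\dirn_pB$ from the direction-space warped product (rather than from Theorem~\ref{thm:gradient}), your dimension-collapse endgame (${\diff}_pf\equiv 0$, hence $\dim\tang_{\overline p}W<\dim W$) can in fact be salvaged, but as written the argument does not go through.
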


\begin{proof}
By Lemma \ref{lem:F=interval}.(\ref{F=int}), we may assume $F$ is a non-trivial interval $J=[-\theta_0,\theta_0]$. 

 \begin{claim}
$B\in\CBB^\kappa$.
\end{claim}
See Lemma  \ref{lem:tan-cone}\,(\ref{mainB-base}).

 \begin{claim}\label{cl:f>0-concave}
$f\,|\,\bigl(B-(Z\cup\partial B)\bigr)\,\in\,\conc^\kappa$.
\end{claim}
Set $W=B\times_f[0,\theta_0]$.

For any curve in $B \times_f J$ connecting two points of $W$, any maximal open segment not in $W$ can be reflected into $W$, thus giving a curve of equal length in $W$. Hence intrinsic distance in $W$   equals distance in  $B\times_fJ$.  Therefore $W\in \CBB^{\,\kappa}$.

Consider $p\in B-(Z\cup\partial B)$.  Since $p\notin \partial B$,  $\Sigma_pB$ is without boundary.   Let $0<\epsilon< \theta_0$.  Since $\dirn_{(p,\epsilon)}W$ is the spherical suspension of $\Sigma_pB$ by Lemma \ref{lem:tan-cone}\,(\ref{f>0-tan}), and hence is without boundary, then $(p,\epsilon)$ is an interior point of $W$. Also $(p,0)\in\partial W$, since  $\dirn_{(p,0)}W$ is the hemispherical suspension of $\Sigma_pB$.
 
There  is $c  > 0$ and a neighborhood  $U$  in $ B-(Z\cup\partial B)$ of $p$ such that if $q\in U$ and $0<\epsilon< c$, the nearest point in $\partial W$ to
 $(q,\epsilon)$  lies in $B\times\{0\}$. It follows, by Theorem \ref{thm:wf-convexity}\,(\ref{d-bdry}) applied to $X=W$, that for any geodesic $\alpha$ in $U$, the restriction of $\sn^\kappa\circ\, \dist_{B\times\{0\}}$ to the geodesic $\alpha\times\{\epsilon\}$ in $B\times\{\epsilon\}$ (necessarily also a geodesic in $W$)  is sinusoidally $\kappa$-concave.  Thus $f\,|\,\bigl(B-(Z\cup\partial B)\bigr)\,\in\,\conc^\kappa$  by Lemma \ref{lem:f-lim}. 

 \begin{claim}\label{cl:Z-in-bdry}
$Z\subset\partial B$.
 \end{claim}
The claim is true if $\dim B=1$.  In that case, $B$ is either a circle, or a closed  interval, i.e. a connected closed subset of $\R$.   If $f ( p ) =0$ for  some $p\in B-\partial B$, then geodesics of $B \times_f J$ bifurcate,
contradicting $B \times_f J\in \CBB^{\,\kappa}$. 
Specifically, we can choose an isometric imbedding  
$\gamma_B:[b, a] \to B$, where $b < 0 < a$,  $\gamma_B(0) = p$, and
$f(\gamma_B(a)) >  0$. By Proposition  \ref{prop:f=0}\,(\ref{f=0-fiber-projection}), we may define $\gamma_F(s)$ when $s \notin Z$ to be $0$ for 
$b \le s \le 0$, and either $0$ or $\theta_0$ for $0 < s \le a$.

Now choose $n>1$, and assume the claim is true whenever $\dim B=n$.
 
Suppose  $\dim B=n+1$,  and $f ( p ) =0$ for  some $p\in B-\partial B$. 
We have
$\Sigma_{\overline p}\,(B\times_fJ)\in \CBB^1$  \cite{bgp}.
By Lemma  \ref{lem:tan-cone}\,(\ref{diff}),  
$$\dirn_{\overline p}\,(B\times_fJ) =\dirn_pB\times_{({\diff}_pf|\dirn_pB)}J,$$  
where $\dim (\dirn_pB)=n$ and $\partial (\dirn_pB)=\emptyset$  since $p\notin\partial B$.  By the induction hypothesis, 
$({\diff}_pf|\dirn_pB)>0$.   Therefore by claim \ref{cl:f>0-concave}, $({\diff}_pf|\dirn_pB)\in\conc^1$.
This is impossible since $({\diff}_pf|\dirn_pB)$ must take a minimum by compactness. 
In this case, some geodesic to the minimum point must extend as a quasigeodesic on which  ${\diff}_pf|\dirn_pB$ becomes negative, a contradiction.  Hence claim \ref{cl:Z-in-bdry}.

\begin{claim}\label{cl:concave}  
 $f\in\conc^\kappa$. \end{claim}
Suppose  $\alpha$ is  a geodesic of $B$. If $\alpha$ has no internal intersection with $\partial B$, then $f\circ\alpha\in\conc^\kappa$  by claims \ref{cl:f>0-concave} and \ref{cl:Z-in-bdry}.
Otherwise,  $\alpha\subset\partial B$.  Let $\hat\alpha$ be a subsegment of $\alpha$ obtained by arbitrarily small shortening at 
either endpoint.  Since $B\in\CBB^\kappa$,
 $\hat\alpha$   is the unique geodesic between its endpoints. Any sequence of geodesics with endpoints  in $B-\partial B$, and 
approaching  the endpoints of $\hat\alpha$, must lie in $B-\partial B$ and converge to
$\hat\alpha$. Therefore $f\circ\hat\alpha$ is sinusoidally $\kappa$-concave and hence so is  $f\circ\alpha$, as claimed.\end{proof}

\begin{defn}\label{def:glue} 
 Suppose $B \times_f F\in \CBB^\kappa$, where $(B,f,F)$ is a WP-triple. Set $Z(f)= f^{-1}(0)$, where  $Z(f)\subset\partial B$ by Theorem \ref{thm:mainB:base-f}. 
Define $B^\dag(f)$ to be  the result of gluing two copies of
$B$ along   $\,\cl(\partial B-Z(f))$.  Define 
$f^\dag: B^\dag(f)\to \R_{\ge 0}$ by $f^\dag=f\circ (\Pi^\dag(f))$ where  $\Pi^\dag(f):B^\dag(f)\to B$ is the tautological map. 
\end{defn}

Now we use Petrunin's incomplete-globalization theorem, Theorem  \ref{thm:anton}, to prove  the following partial-boundary gluing  theorem.  
Since the gluing theorem may be accessed at the level of direction spaces by induction on dimension, the task is to show that it transmits to the underlying space.

\begin{thm}[Theorem  \ref{thm:mainB}\,(\ref{mainB:dag})]
\label{thm:mainB-base-wf}
Let $(B, f,F)$ be a WP-triple. Suppose $B \times_f F\in \CBB^\kappa$. Then
$B^\dag(f)\in\CBB^\kappa$ and  $f^\dag\in \conc^\kappa$.
\end{thm}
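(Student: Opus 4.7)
The plan is to prove both conclusions by induction on $n = \dim B$, reducing first, via Lemma \ref{lem:F=interval}(\ref{F=int}), to the case where $F$ is a nontrivial closed interval $J$. The strategy is to verify the local condition $\curv \geq \kappa$ pointwise on $B^\dag(f)$ and then invoke Petrunin's incomplete-globalization theorem (Theorem \ref{thm:anton}) to promote it to $B^\dag(f) \in \CBB^\kappa$. The space $B^\dag(f)$ is geodesic and complete ($B$ is complete and the glueing locus $\cl(\partial B - Z(f))$ is closed), so Theorem \ref{thm:anton} applies directly.

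For local curvature at $\bar p \in B^\dag(f)$, three cases arise. If $p$ lies in $B-\partial B$, a neighborhood of $\bar p$ is isometric to one in $B$, and $B \in \CBB^\kappa$ (Theorem \ref{thm:mainB:base-f}(1)) gives the bound. If $p \in \partial B - Z(f)$, locally $B^\dag(f)$ is a double of $B$ across a boundary neighborhood of $p$, handled by the standard local doubling theorem for $\CBB^\kappa$ spaces. The delicate case is a transition point $p \in Z(f) \cap \cl(\partial B - Z(f))$, where I pass to the direction space: by Lemma \ref{lem:tan-cone}(\ref{diff}), $\Sigma_{\bar p}(B\times_f J) = \Sigma_p B\times_{\diff_p f \mid \Sigma_p B} J$ lies in $\CBB^1$, so the inductive hypothesis applied to the WP-triple $(\Sigma_p B, \diff_p f \mid \Sigma_p B, J)$ (of base dimension $n-1$) yields $(\Sigma_p B)^\dag(\diff_p f \mid \Sigma_p B) \in \CBB^1$ and $(\diff_p f)^\dag \in \conc^1$. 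A compatibility identification of this space with $\Sigma_{\bar p} B^\dag(f)$ then supplies the infinitesimal $\curv \geq \kappa$ at $\bar p$.

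For the concavity of $f^\dag$, let $\alpha$ be a geodesic in $B^\dag(f)$. If $\alpha$ stays in one copy of $B$, then $f^\dag\circ\alpha = f \circ\alpha$ is sinusoidally $\kappa$-concave by Theorem \ref{thm:mainB:base-f}(\ref{mainB-f}). If $\alpha$ crosses the glueing seam at $t_0$ with $\Pi^\dag(f)(\alpha(t_0)) \in \partial B - Z(f)$, the reflection isometry of $B^\dag(f)$ exchanging the two copies fixes $f^\dag$, forcing the one-sided derivatives of $f^\dag\circ\alpha$ at $t_0$ to match; sinusoidal $\kappa$-concavity on each side then extends across $t_0$ by the two-point support characterization. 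At crossings at transition points, the analogous infinitesimal control is supplied by the inductive conclusion $(\diff_p f)^\dag \in \conc^1$.

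The main obstacle is the compatibility step at transition points: identifying $\Sigma_{\bar p} B^\dag(f)$ with $(\Sigma_p B)^\dag(\diff_p f \mid \Sigma_p B)$, so that the doubling operation commutes with the passage to the direction space. Intuitively this says that the tangent cone at $p$ of the glueing locus $\cl(\partial B - Z(f))$ is $\cl(\partial(\Sigma_p B) - Z(\diff_p f \mid \Sigma_p B))$, but showing this requires a careful description of how the glueing seam terminates asymptotically at a point of $Z$. Once established, the inductive hypothesis, together with the local doubling theorem and Theorem \ref{thm:anton}, closes the argument.
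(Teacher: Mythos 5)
Your overall framework---reduce to an interval fiber via Lemma~\ref{lem:F=interval}, induct on dimension, control the points over $\partial B - Z$ via Perelman's doubling theorem, and invoke Petrunin's theorem---agrees with the paper's. But there is a genuine gap at the transition points $p \in Z \cap \cl(\partial B - Z)$, which you flag as ``the main obstacle'' and leave unresolved; this is precisely where the real work of the theorem lies, so the proposal does not close the argument.

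The difficulty is circular. To verify $\curv \ge \kappa$ at a lift $p^\dag$ of a transition point you want to identify $\Sigma_{p^\dag}B^\dag(f)$ with $(\Sigma_p B)^\dag(\diff_pf\,|\,\Sigma_pB)$. But there is no a priori description of the direction space of $B^\dag(f)$ at $p^\dag$: the blow-up Lemma~\ref{lem:blowup} applies only to spaces already known to be in $\CBB^\kappa$, and for a bare geodesic space, having a direction space that happens to lie in $\CBB^1$ does not by itself yield a local curvature bound. The inductive hypothesis does give $(\Sigma_pB)^\dag\in\CBB^1$, but that is a statement about the doubling of the \emph{tangent data of~$B$}, not about the infinitesimal structure of the glued space $B^\dag$; bridging the two is precisely the missing step. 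The paper sidesteps the issue entirely. It never establishes local curvature at transition points. Instead, the inductive step proves a global statement about geodesics of $B^\dag$: any geodesic of $B^\dag$ passing through a point of $(\Pi^\dag)^{-1}(Z)$ in its interior lies wholly in $(\Pi^\dag)^{-1}(Z)$. The proof of that statement uses the direction space of $B\times_fJ$ at $\overline{p}$ (which is legitimately in $\CBB^1$ by the hypothesis $B\times_fJ\in\CBB^\kappa$, not by circularity), applies the inductive hypothesis to $(\Sigma_pB)^\dag$, and finishes with a delicate angle-sum estimate at the glueing seam showing the two half-geodesics meet the maximizing direction of ${\diff}_pf$ at angle exactly $\pi/2$. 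With this, $B^\dag_0=(f^\dag)^{-1}\bigl((0,\infty)\bigr)$ is a geodesic subspace, $\curv B^\dag_0\ge\kappa$ follows from the doubling theorem, and $B^\dag\in\CBB^\kappa$ follows from Theorem~\ref{thm:anton} applied to the \emph{incomplete} space $B^\dag_0$, whose completion is $B^\dag$. This is exactly why Petrunin's incomplete-globalization theorem---rather than classical BGP globalization---is essential: in this order of argument one cannot verify local curvature at the very points your proposal needs to handle, so one must work on the dense open set where it can be verified and then complete. The same structural fact also takes care of the concavity of $f^\dag$ along geodesics that touch a transition point (such a geodesic lies in the zero set, so $f^\dag$ is constantly $0$ along it), which is cleaner than the one-sided-derivative matching argument you sketch and which, as stated, does not obviously hold.
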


\begin{proof}

By Lemma \ref{lem:F=interval}\,(\ref{F=int}), we may assume $F$ is a non-trivial interval $J=[-\theta_0,\theta_0]$.

\setcounter{claim}{0}

Let us write   $Z=Z(f)=f^{-1}(0)$, $B^\dag=B^\dag(f)$ and $\Pi^\dag=\Pi^\dag(f)$. By Theorem \ref{thm:mainB:base-f}\,(\ref{Z-in-bdry}), $Z\subset\partial B$. 

For $a\ge 0$, set 
$$B^\dag_{\,a}=(f^\dag)^{-1}\bigl((a,\infty)\bigr),\,\,\,\,\,\,f^\dag_{\,a}=f^\dag | \,\,B^\dag_{\,a}.$$ 
Then $B^\dag_{\,0}= (\Pi^\dag)^{-1}(B-Z)$.
Let $\overline{{B^\dag}_{\! a}}$ be the closure of   $B^\dag _{\,a}$ in ${B^\dag}$. 

\begin{claim}\label{local-dag}
$\curv B^\dag_{\,0}\,\ge\K$. \end{claim}
Let $B^\ddag$ denote the double of $B$. By Perelman's doubling theorem,  $B^\ddag \in \CBB^{\,\kappa}$   \cite[Theorem 5.2]{Pm1}.  The natural embedding of the space $B^\dag_{\,0}$
in $B^\ddag$ is a local isometry. Hence the claim.

\begin{claim}\label{local-dag1}
$f^\dag_{\,\,0}\in \conc^\kappa$.
\end{claim}

Let $W$  be the preimage  of   $B\times_f[0,\theta_{\,0}]$ under the tautological map 
$$(B \times_f J)^\ddag\to B \times_f J.$$ 
By reflection, 
(as in the proof of  Theorem \ref{thm:mainB:base-f}, Claim \ref{cl:f>0-concave}) 
intrinsic distance in $W$   equals distance in $(B \times_f J)^\ddag$. Since  $(B \times_f J)^\ddag\in\CBB^\K$, then $W\in \CBB^\kappa$. 

For any $q\in B^\dag_{\,0}$, there is $c > 0$ and a neighborhood  $U$ of $q$ in $B^\dag_{\,0}$ such that if $p\in U$ and $0<\epsilon< c$, the nearest point in $\partial W$ to
 $(p,\epsilon)\in W$  lies in $B^\dag_{\,0}\times\{0\}$. It follows, by Theorem \ref{thm:wf-convexity}\,(\ref{d-bdry}) applied to $X=W$, that for any geodesic $\alpha$ in $U$, the restriction of $\sn^\kappa\circ\, \dist_{B^\dag\times\{0\}}$ to the geodesic $\alpha\times\{\epsilon\}$ in $B^\dag_{\,0}\times\{\epsilon\}$  is sinusoidally $\kappa$-concave.  By Lemma \ref{lem:f-lim},
 the claim follows.

\begin{claim}\label{geodesic-space}
The theorem holds if $\dim B=1$.
 \end{claim}

Since $B\in\CBB^\kappa$,  $B$ is a circle of length $\le 2\cdot  \varpi^{\kappa}$ or a closed interval  of length $\le \varpi^{\kappa}$.
By Theorem \ref{thm:mainB:base-f}\,(\ref{Z-in-bdry}), $Z\subset\partial B$.  If $B=\text{circle}$ or $Z=\partial B$, then $B^\dag=B$ and the claim already holds by Theorem \ref{thm:mainB:base-f}.  So we may assume either $B$ is a ray and $Z=\emptyset$, or $B$ is a finite closed interval and  $Z=\emptyset\text{ or an endpoint}$. 

We have $f^\dag_{\,\,0}\in \conc^\kappa$ by  claim  \ref{local-dag1}. If $B$ is a ray, then $\K\le 0$, 
 $B^\dag\in\CBB^\K$, and $f^\dag=f^\dag_{\,\,0}\in \conc^\kappa$. 
 If $B^\dag$ is a circle, then $B^\dag\in\CBB^\K$ and $f^\dag=f^\dag_{\,\,0}\in \conc^\kappa$. If $B^\dag$ is an interval, then $f^\dag\in \conc^\kappa$.  It follows   that $B^\dag$ is an interval of length $\le\varpi^{\kappa}$ and $B^\dag\in\CBB^\kappa$.

\begin{claim}\label{bdryZ}
Choose $n\ge 1$, and assume the theorem holds 
if $\dim B=n$.
Suppose  $\dim B=n+1$.
Suppose $\alpha^\dag:I\to B^\dag$ is a unit-speed geodesic of $B^\dag$ such that $I$ is an interval with $0$ in its interior, and  $\alpha^\dag(0)=p^\dag$ where 
$(\Pi^\dag)(p^\dag)=p$ for some $p\in Z$. 
Then $\alpha^\dag$ lies in
 $(f^\dag)^{-1}(0)=(\Pi^\dag)^{-1}Z$. \end{claim}

 It suffices to prove the claim for  $I=(-\epsilon,\epsilon)$, for some  $\epsilon>0$.
 
 Denote the
 gluing set by  $G=\cl(\partial B-Z)$.
Then $\partial B$ is the disjoint union
\[
\partial B=\inte Z \,\sqcup \, G,
\]
where 
$\inte$ denotes  interior relative to $\partial B$.   The claim is clear if $p\in\inte Z$, so we assume $p\in G$.
 
 We have
 $\Sigma_{\overline p}\,(B\times_fJ)\in \CBB^{\,1}$  \cite{bgp}.
By Lemma \ref{lem:tan-cone}\,(\ref{diff}),$$\dirn_{\overline p}\,(B\times_fJ) =(\dirn_pB)\times_{({\diff}_pf|\dirn_pB)}J,$$
where $\dim (\dirn_pB)=n$.  
Set $ (\Sigma_pB)^\dag = (\Sigma_pB)^\dag({\diff}_pf|\dirn_pB)$ and  
$$\Pi^\dag_p=\Pi^\dag({\diff}_pf|\dirn_pB):(\Sigma_pB)^\dag\to\Sigma_pB.$$
 
By the induction hypothesis, 
\begin{equation}\label{eq:induction}
(\Sigma_pB)^\dag
\in\CBB^{\,1}\text{ and $({\diff}_pf|\dirn_pB)^\dag
\in\conc ^{\,1}$.}
\end{equation}
Then $({\diff}_pf|\dirn_pB)^\dag$ is nonnegative and not identically $0$, hence must take its minimum at a boundary point of $(\Sigma_pB)^\dag$. (Otherwise there would be a quasigeodesic extension, along which  $({\diff}_pf|\dirn_pB)^\dag$ becomes negative,  of a geodesic to a minimum point.)

Moreover, 
 $({\diff}_pf|\dirn_pB)^\dag$ has a unique maximum point $u^\dag_{\text{max}}$ and 
 \begin{equation}\label{eq:<pi/2}
 |u^\dag_{\text{max}} \,u^\dag\,|_{(\Sigma_pB)^\dag} \le\pi/2
 \end{equation}
 for any $u^\dag\in(\Sigma_pB)^\dag$.
By uniqueness,  if $u_{\text{max}}$ is the direction at which  $({\diff}_pf|\dirn_pB)\in\conc ^1$ takes its maximum, 
then $u_{\text{max}}\in\partial (\Sigma_pB)$ and \begin{equation}\label{eq:dag-grad}
u^\dag_{\text{max}}=(\Pi^\dag_p)^{-1}(u_{\text{max}}).
\end{equation}

Let us  write
$$B^\dag=(B\times\{1,2\})/\sim$$
where $(q,1)\sim(q,2)$ if $q\in G$. 
Set  $\alpha_1^\dag=(\alpha^\dag \circ (-\id))|[0,\epsilon)$ and $\alpha_2^\dag=\alpha^\dag|[0,\epsilon)$.

Suppose both $\alpha_i^\dag$  intersect $(\Pi^\dag)^{-1}(G)$ only at $t=0$.  If both lie in $B\times\{1\}$, say, then $\alpha^\dag$ lies in $\partial (B^\dag)$ as desired.  Thus we may suppose the $\alpha_i^\dag$  lie in different copies of $B$. 

Suppose $\alpha_i^\dag$  intersects $(\Pi^\dag)^{-1}(G)$ at some $t\ne 0$, for one or both $i$.  Then we may shorten 
$\alpha_i^\dag$ so that  its endpoints lie on $(\Pi^\dag)^{-1}(G)$.  By
reflecting maximal open segments in one copy of $B$ into the other copy,
 we may  obtain a 
curve with the same endpoints and length as $\alpha^\dag$ and passing through $p^\dag$, and which lies for $t\le 0$ and $t\ge 0$ respectively  in different copies of $B$. 

Therefore we  
 may assume that 
$\alpha_i^\dag$ lies in  $B\times\{i\}$.  

Let $u_i^\dag=(\alpha_i^\dag)^+(0)$, $\alpha_i=\Pi^\dag\circ\alpha_i^\dag$, and  $u_i=\alpha_i^+(0)$.

We may choose a geodesic direction $v\in\Sigma_pB$ arbitrarily close to $u_{\text{max}}$. 
For a geodesic  $\sigma$  with $v=\sigma^+(0)$, let $\sigma_i^\dag$ be the geodesic in $B\times\{i\}\subset B^\dag$ such that $\Pi^\dag\circ\sigma_i^\dag=\sigma$.  Set $v_i^\dag=(\sigma_i^\dag)^+(0)$. 
Then
\begin{equation}\label{eq:dag-angle}
|u_i\,v\,|_{\Sigma_pB}=|u_i^\dag\,v_i^\dag\,|_{(\Sigma_pB)^\dag},
\end{equation}
since the righthand side is at most the lefthand side, and is not  
smaller by the reflection argument.

Now we are going to show 
\begin{equation}\label{eq:sum>pi}
|u_1^\dag\,u^\dag_{\text{max}}\,|_{(\Sigma_pB)^\dag} + |u^\dag_{\text{max}} \,u_2^\dag\,|_{(\Sigma_pB)^\dag}=\pi.
\end{equation}

Let us check that for any $\epsilon >0$, if
 $v$ is sufficiently close to $u_{\text{max}}$ then 
 \[
|u_1\,v\,|_{\Sigma_pB} + |v\,u_2\,|_{\Sigma_pB} \,\ge\, \pi -\epsilon.
\]
Indeed, suppose not.  By Lemma \ref{lem:blowup}, there 
exists $0<b<1$ 
such that 
for some  $v$ arbitrarily close to $u_{\text{max}}$, 
\begin{equation}\label{eq:sum<pi}
|\alpha_1(t)\,\sigma_1(b\cdot t)| + |\sigma_2(b\cdot t)\, \alpha_2(t)|  \,\le 
\,2\cdot
a\cdot t+o(t),
\end{equation}
where $a=\sin\frac{\pi -\epsilon}{2}<1$.
Moreover, since $({\diff}_pf)(u_{\text{max}})>0$, for  $c=\frac{1-a}{3}$ and $v$ sufficiently close to $u_{\text{max}}$,
\begin{equation}\label{eq:close-to-glue}\dist_{(\partial B - Z)} \sigma(b\cdot t) \le c
\cdot  t +o(t).
\end{equation}
 It follows from (\ref{eq:sum<pi})and (\ref{eq:close-to-glue}) that 
 $$|\alpha_1^\dag (t)\,\alpha_2^\dag(t)|\le 2\cdot (a +c) \cdot t  + o(t),$$
where $a+c<1$.  
Then
 a segment of the geodesic $\alpha^\dag$ including $p^\dag$ does not minimize, a contradiction.

Therefore (\ref{eq:sum>pi}) follows from (\ref{eq:dag-grad}) and  (\ref{eq:dag-angle}).
By (\ref{eq:<pi/2}), each term on the lefthand side of  (\ref{eq:sum>pi})  equals $\pi/2$. Then $({\diff}_pf|\dirn_pB)^\dag(u_i^\dag)=0$ by (\ref{eq:induction}). Thus $f\circ\alpha_i=0$ by concavity of $f$, and $\alpha^\dag$ lies in 
 $(f^\dag)^{-1}(0)=(\Pi^\dag)^{-1}Z$ by Theorem \ref{thm:mainB:base-f}\,(\ref{Z-in-bdry}).  

\begin{claim}
The theorem holds in all dimensions.\end{claim}
Choose $n\ge 1$, and assume the theorem holds 
if $\dim B=n$.
Suppose  $\dim B=n+1$.
By claim \ref{bdryZ},  $B^\dag_0$ is a geodesic space.  By Theorem \ref{thm:anton} and claim \ref{local-dag},  $B^\dag\in\CBB^\kappa$. By claims  \ref{local-dag1} and  \ref{bdryZ},  $f^\dag\in\conc^\kappa$.\end{proof}

\begin{rem}
\label{rem:ab}

In the proof of \cite[Theorem 6.2.2,   case $\kappa\le 0$]{ab-wp}, the following argument is outlined.

Given: a continuous function $\varPhi (p,\theta)= f(p)\cos\theta:B\times_f J\to\R_{\ge 0}$  where  $J=[-\pi/2,\pi/2]$,  such that  $\varPhi\in\conc^\K$,  $\curv \Phi^{-1}(0,\infty)\ge\K$, and $\K\le 0$.  

 Prove: $B\times_fJ\in\CBB^\kappa$. 

The argument suggested requires considerable  preparation to fill in. There is a shorter proof by perturbation, similar to the perturbation argument  in \cite{akp1}.

But Theorem \ref{thm:anton} makes argument unnecessary. Since  $\varPhi\in\conc^\K$, then $\Phi^{-1}\bigl((0,\infty)\bigr)$ is convex in $B\times_f J$.
Since   $ B\times_fJ$ is the completion of  $\Phi^{-1}\bigl((0,\infty)\bigr)$,   the claim follows immediately from Petrunin's incomplete-globalization theorem (Theorem \ref{thm:anton}).\end{rem}

\section{Curvature of the fiber, $\CBB$
} 
 \label{sec:fiber-cbb}
 
 This section  finishes the proof of   Theorem  \ref{thm:mainB}, completing our consideration of curvature bounded   below.

 \begin{thm}[Theorem \ref{thm:mainB}\,(\ref{mainB:fiber-f>0})\,\&\,(\ref{mainB:fiber-f=0})] \label{thm:mainB-fiber} 
 Suppose $B \times_fF\in\CBB^\kappa$, where $(B, f,F)$ is  a WP-triple. Set $Z=f^{-1}(0)$.  
   \begin{enumerate}
  \item[(i)]
  If $Z = \emptyset$, then $\kappa\le 0$, and $F\in\CBB^{\,\kappa_F}$ for $\kappa_F=\kappa\cdot (\inf   f)^2$.

\item[(ii)]
If $Z \ne \emptyset$, then 
$F\in\CBB^{\,\kappa_F}$ for

\vspace{1mm}

\hspace*{-15mm}$\kappa_F \ \ =\ \sup\,\{|{\grad_q}f\,|^2: \,q\in Z\}$

\hspace{-10mm}$= \sup\,\{(f\circ\alpha)^+(0)^2 : \alpha =\!\dist_Z\textrm{-realizer with footpoint } \alpha(0)\in Z,\, |\alpha^+(0)|=1\}.$

\end{enumerate}
\end{thm}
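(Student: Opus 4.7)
The plan is to extract $F$, up to a rescaling, as a convex metric subspace of a space whose curvature we can already control.

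For (ii), where $Z\neq\emptyset$: fix $q\in Z$, let $u_{\max}\in\Sigma_qB$ be the direction maximizing $\diff_qf|_{\Sigma_qB}$, with value $c:=|\grad_qf|$, and set $R=\R_{\ge 0}\!\cdot u_{\max}\subset T_qB$. Lemma~\ref{lem:blowup} together with Lemma~\ref{lem:tan-cone}(\ref{diff}) gives the tangent cone
\[
T_{\overline q}(B\times_fF)=T_qB\times_{\diff_qf}F\,\in\,\CBB^0,
\]
and accordingly its direction space at the apex is $\Sigma_qB\times_{\diff_qf|_{\Sigma_qB}}F\in\CBB^1$. Since $T_qB=\cone(\Sigma_qB)$, $R$ is a geodesic ray from the apex $o_q$, and one checks via the cone distance formula that two points $(t_1u_{\max},t_2u_{\max})$ with $t_1,t_2>0$ are uniquely joined by the segment on $R$, so $R$ is convex in $T_qB$. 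Along $R$, $\diff_qf$ restricts to the linear map $tu_{\max}\mapsto ct$; hence the intrinsic warped-product metric on $R\times F$ agrees with $[0,\infty)\times_{ct}F=\cone(cF)$.

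The crux is to show that the inclusion $\cone(cF)\hookrightarrow T_qB\times_{\diff_qf}F$ is an isometric embedding. Extrinsic $\le$ intrinsic is automatic. For the reverse, I would apply Clairaut's theorem (Theorem~\ref{thm:clairaut}) to any geodesic $\gamma=(\gamma_B,\gamma_F)$ of the tangent-cone warped product joining two points of $R\times F$. The inputs are: (a) on $T_qB$ one has $\diff_qf(v)\le c|v|$ with equality on $R$ (by maximality of $\diff_qf|_{\Sigma_qB}$ at $u_{\max}$); (b) the Clairaut relation $(\diff_qf\circ\gamma_B)^2\cdot v_F=\mathrm{const}$; (c) the symmetry of vertical geodesics from Corollary~\ref{cor:vertical}. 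Together these are to force that any lateral excursion of $\gamma_B$ off $R$ strictly increases the length of $\gamma$ beyond the $\cone(cF)$-distance between its endpoints. Granted the embedding, $\cone(cF)\in\CBB^0$, and Remark~\ref{rem:cone}(d) then yields $cF\in\CBB^1$, i.e.\ $F\in\CBB^{c^2}=\CBB^{|\grad_qf|^2}$. Taking the supremum over $q\in Z$ gives $F\in\CBB^{\kappa_F}$ with $\kappa_F=\sup\{|\grad_qf|^2:q\in Z\}$; the equivalent realizer-slope formulation follows from Theorem~\ref{thm:grad}(b).

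For (i), where $Z=\emptyset$: the bound $\kappa\le 0$ is the content of Remark~\ref{rem:cone}(b) (which invokes Lemma~\ref{lem:Z-not-empty}). Set $m:=\inf f\ge 0$. When $m>0$ is attained at some $p_0\in B$, Proposition~\ref{prop:wp-proj}(\ref{f-min}) realizes $\{p_0\}\times F$ as an isometric copy of $m\cdot F$ inside $B\times_fF\in\CBB^\kappa$, and the $(1+3)^\kappa$ comparisons inherited by fiber quadruples rescale by $1/m$ to $(1+3)^{\kappa m^2}$ on $F$. When $m$ is not attained, the same argument runs asymptotically on a sequence $p_n$ with $f(p_n)\to m$: by Remark~\ref{rem:cone}(c), the cost of leaving a near-minimal vertical leaf vanishes in the limit, so the extrinsic-vs-intrinsic defect on fixed fiber quadruples tends to zero, and passing $(1+3)^{\kappa f(p_n)^2}$ inequalities to the limit gives $(1+3)^{\kappa m^2}$ on $F$. (The $m=0$ case falls out of the same argument, yielding only the trivial bound $F\in\CBB^0$.)

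The principal obstacle is the isometric-embedding step in~(ii): ruling out shortcuts off $R\times F$ in the full tangent-cone warped product requires a careful combination of convexity of $R$, the maximality $\diff_qf(u_{\max})=c$, Clairaut's theorem, and the symmetry of vertical geodesics. The corresponding step in~(i) — the asymptotic vanishing of the extrinsic defect on near-minimal fibers — is conceptually simpler but still leans on Remark~\ref{rem:cone}(c) and a Clairaut-type estimate to get a uniform rate.
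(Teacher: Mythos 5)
Your route through the tangent cone is genuinely different from the paper's, which proves (ii) by downward induction on $\dim B$, reducing at each step to the direction space $\Sigma_pB\times_{\diff_pf|\Sigma_pB}F\in\CBB^1$ and terminating when the base is $1$-dimensional. The idea of extracting $\cone(cF)$ as an isometrically embedded subspace of $T_qB\times_{\diff_qf}F\in\CBB^0$ is attractive, but as stated it is false for general $q\in Z$. To see why: the claimed embedding amounts to saying that in the link $L=\Sigma_qB\times_hF\in\CBB^1$, where $h=\diff_qf|\Sigma_qB$, the angular distance from $(u_{\max},\varphi)$ to $(u_{\max},\psi)$ equals $\min(c\,|\varphi\psi|_F,\pi)$, $c=h(u_{\max})$. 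Take $\Sigma_qB=[0,\pi/2]$ with $h=\min(\sin,\cos)$ (a corner of a $2$-dimensional base) and $F=S^1$ of length $2\pi$; then $c=1/\sqrt 2$, $h$ vanishes at both endpoints, and the over-the-apex path in $L$ has length $\pi/2$, which for antipodal $\varphi,\psi$ beats $c\pi=\pi/\sqrt 2$. So the embedding fails. What saves the argument is restricting to $\dist_Z$-\emph{footpoints} $p\in\inte Z$, where $\Sigma_pB$ is a \emph{hemispherical} cone and hence $T_pB=\R_{\ge0}\times T_p(\partial B)$ with $\diff_pf$ linear in the $\R_{\ge0}$-factor (the content of the paper's Lemma~\ref{lem:grad-foot}, claims~\ref{cl:h}--\ref{footpt-grad}); then the coordinate projection $T_pB\to\R_{\ge0}$ is $1$-Lipschitz and gives the length-nonincreasing retraction that makes $\cone(cF)\hookrightarrow T_pB\times_{\diff_pf}F$ isometric. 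One also needs the density-of-footpoints and lower-semicontinuity argument (claims~\ref{lim-footpt}--5 of Lemma~\ref{lem:grad-foot}) to pass from footpoints to $\sup_{q\in Z}$. Your sketch via Clairaut and vertical-geodesic symmetry does not supply this structural input, and is not replacing it with anything; it is the genuine gap you yourself flagged. With the footpoint restriction and the hemispherical splitting in hand, though, your final step (a direct product splitting rather than dimensional induction) would be a real simplification over the paper's Proposition~\ref{prop:B-fiber-0}.

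For (i), the case $\inf f>0$ is fine and is essentially Proposition~\ref{prop:B-fiber-inf>0}: the sandwich $|\varphi\psi|_F\le|(p_n,\varphi)(p_n,\psi)|\le f(p_n)|\varphi\psi|_F$ (after scaling $\inf f=1$) makes the extrinsic metric converge to the fiber metric, and $(1+3)^\kappa$ passes to the limit. But the parenthetical claim that ``the $m=0$ case falls out of the same argument'' is wrong. When $\inf f=0$, the same sandwich gives $|(p_n,\varphi)(p_n,\psi)|\le f(p_n)|\varphi\psi|_F\to 0$: the fiber quadruples collapse to a point in $B\times_fF$, and the limiting $(1+3)$ condition is vacuous, giving no information about $F$. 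And $F\in\CBB^0$ is not a ``trivial bound'' — it is a nontrivial conclusion that the paper proves in Proposition~\ref{prop:B-fiber-inf=0} by an entirely different mechanism: rescale by $\lambda_n=1/f(p_n)$, use the exponential two-sided bounds on $f$ coming from $\conc^{\kappa_n}$ along quasigeodesics of the double to show the rescaled warping functions tend to $1$ uniformly on balls, and pass to a pointed Gromov--Hausdorff limit $B_\infty\times F\in\CBB^0$. Some such noncollapsing blowup is unavoidable here, and your proposal has no substitute for it.
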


\begin{lem}\label{lem:Z-not-empty}
If $B \times_f F\in \CBB^\kappa$ for $\kappa \ge 0$, then one of these statements holds:
  \begin{enumerate}
   \item[(a)]
       $\emptyset\ne Z\subset \partial B$,
 \item[(b)]
$f\equiv a>0$ , $Z=\emptyset$, $ \kappa=0$.
    \end{enumerate}
\end{lem}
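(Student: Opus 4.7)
The plan is to dichotomize on whether $Z=\emptyset$. If $Z\ne\emptyset$, Theorem \ref{thm:mainB:base-f} immediately gives $Z\subset\partial B$, which is case (a). So I focus on $Z=\emptyset$ and aim to land in case (b). Here $f>0$ on $B$, and Theorem \ref{thm:mainB-base-wf} supplies a sinusoidally $\kappa$-concave extension $f^\dag>0$ on the doubled space $B^\dag\in\CBB^\kappa$. Since the gluing set $\cl(\partial B-Z)$ coincides with $\partial B$ when $Z=\emptyset$, the doubling kills the boundary: $\partial B^\dag=\emptyset$.

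Next I rule out $\kappa>0$. Fix any $p^\dag\in B^\dag$ and direction $\xi\in\Sigma_{p^\dag}B^\dag$, and extend to a bi-infinite quasigeodesic $\tilde\gamma:\R\to B^\dag$ using Petrunin's quasigeodesic-extension theorem for complete $\CBB^\kappa$ spaces without boundary. Since $f^\dag$ is semiconcave, $g:=f^\dag\circ\tilde\gamma$ is sinusoidally $\kappa$-concave on $\R$ and positive. For $\kappa>0$, the two-point support of $g$ on $[0,L]$ with $L<\varpi^\kappa$ is the solution
\[
y_L(t)=g(0)\cos(\sqrt\kappa\,t)+B_L\sin(\sqrt\kappa\,t),\qquad B_L=\frac{g(L)-g(0)\cos(\sqrt\kappa\,L)}{\sin(\sqrt\kappa\,L)},
\]
and concavity gives $g\ge y_L$ on $[0,L]$. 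As $L\nearrow\varpi^\kappa$ the denominator tends to $0^+$ while the numerator tends to $g(\varpi^\kappa)+g(0)>0$, so $B_L\to+\infty$ and $y_L(L/2)\to+\infty$; this contradicts finiteness of $g(\varpi^\kappa/2)$. Hence $\kappa\le 0$, and combined with $\kappa\ge 0$, $\kappa=0$.

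Finally, for $\kappa=0$, I show $f$ is constant. Along the same $\tilde\gamma$, $g$ is now classically concave and positive on $\R$. A one-dimensional tangent-line argument---any nonzero value of $g'$ forces $g\to-\infty$ in one direction, contradicting $g>0$---shows $g$ is constant. Hence $d_{p^\dag}f^\dag(\xi)=g'(0)=0$ for every $p^\dag$ and every direction $\xi$, so $f^\dag$ is locally constant, and since $B^\dag$ is connected, $f^\dag$ is constant on $B^\dag$. This gives $f$ constant and case (b).

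The main obstacle is the technical invocation of Petrunin's quasigeodesic extension together with the preservation of sinusoidal $\kappa$-concavity along quasigeodesics; both are standard in the Alexandrov-geometry literature already cited by the paper but are not explicitly set up in this excerpt. For the $\kappa>0$ half one can alternatively exploit compactness of $B^\dag$ (its diameter is $\le\varpi^\kappa$) to produce a minimum $p_0^\dag$ of $f^\dag$ and combine a geodesic with one in an antipodal direction in $\Sigma_{p_0^\dag}B^\dag$ into a bidirectional geodesic through $p_0^\dag$; the tangent support at the minimum then satisfies $y''(0)=-\kappa\,y(0)<0$, conflicting with the local-minimum requirement $y''(0)\ge 0$ and giving the same contradiction without reference to quasigeodesics.
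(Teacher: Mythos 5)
Your proof is correct and relies on the same core ingredients as the paper's: Theorem \ref{thm:mainB:base-f} for $Z\subset\partial B$, Theorem \ref{thm:mainB-base-wf} for $B^\dag\in\CBB^\kappa$ and $f^\dag\in\conc^\kappa$, and the standard fact that quasigeodesics in $B^\dag$ extend indefinitely in the absence of boundary and carry sinusoidal $\kappa$-concavity of $f^\dag$. The organization differs, however. You first reduce to $Z=\emptyset$ and then dichotomize on $\kappa>0$ versus $\kappa=0$, using a two-point-support blow-up as $L\nearrow\varpi^\kappa$ to rule out $\kappa>0$, and the classical ``a positive concave function on $\R$ is constant'' argument when $\kappa=0$. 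The paper instead dichotomizes on whether $f$ is constant, and gives a single uniform argument for $\kappa\geq 0$: if $f$ is non-constant, pick a quasigeodesic $\alpha^\dag$ along which $(f^\dag\circ\alpha^\dag)^+<0$ at the start; the tangential $\kappa$-sinusoid or linear support from above then crosses $0$ in finite arclength, forcing $f^\dag\circ\alpha^\dag$ to vanish, i.e.\ $\alpha^\dag$ reaches a point of $(f^\dag)^{-1}(0)$ (equivalently $\partial B^\dag$), whence $Z\neq\emptyset$; and if $f\equiv a>0$ then $\kappa\geq 0$ and $\kappa$-concavity of a positive constant force $\kappa=0$. Both routes tacitly use that the local two-point/tangential support inequality for $\conc^\kappa$ propagates to intervals of length up to $\varpi^\kappa$; this is standard but worth flagging. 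Two minor cautions on your version: for $\kappa=0$ your bi-infinite argument needs either a genuinely two-sided quasigeodesic through $p^\dag$ (which you can build by concatenating forward quasigeodesics in antipodal directions $\xi,\xi^-$, or replace with the one-sided observation $\diff_{p^\dag}f^\dag(\xi)\ge 0$ for all $\xi$ together with concavity of $\diff_{p^\dag}f^\dag$ forcing equality); and in your alternative $\kappa>0$ route, the concatenation of two geodesics in antipodal directions through a minimum point of $f^\dag$ is not automatically a single geodesic, so there too one should pass through a quasigeodesic.
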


\begin{proof} 
By Theorem \ref{thm:mainB:base-f}\,(\ref{Z-in-bdry}), $Z\subset\partial B$.

Let us write   $B^\dag=B^\dag(f)$. 
By Theorem \ref{thm:mainB-base-wf}, we have 
$  B^\dag\in\CBB^\kappa$ and  $f^\dag\in \conc^\kappa$.  Therefore along a quasigeodesic $\alpha^\dag$ in the interior  of $B^\dag$, $f^\dag\circ\alpha^\dag$ is sinusoidally $\kappa$-concave, i.e. 
  its value 
  when $\kappa>0$ (respectively, $\kappa=0$)  is supported from above by a multiple of a translate of 
$\sn^\kappa$ (respectively, by a linear function) 
having  the same initial value and derivative.  (The definition and properties of quasigeodesics are developed in \cite{PP} and  \cite[Chapter 5]{petrunin_survey}.  Also see \cite[p. 1153]{ab-wp} for a discussion of  the support property used here.)

If $f\ne\text{constant}$, this derivative can be taken to be negative.  Then $\alpha$ cannot be continued indefinitely in the interior of $B^\dag$ since $f^\dag\circ\alpha^\dag$ cannot become negative.  Therefore $\alpha$ reaches $\partial B^\dag$, where $f^\dag =0$. Hence $Z\ne \emptyset$.  

If $f\equiv a>0$, then since $f\in \conc^\kappa$ for  $\kappa \ge 0$, we must have $\kappa =0$.
\end{proof}

\begin{proof}[Proof of Theorem \ref{thm:mainB-fiber}]
The theorem is broken into three cases, which are proved in Propositions \ref{prop:B-fiber-inf>0}, \ref{prop:B-fiber-inf=0}, \ref{prop:B-fiber-0} below.
\end{proof}

\begin{prop}\label{prop:B-fiber-inf>0}
Suppose $B \times_f F\in \CBB^\kappa$, where  $\inf f>0$. 
Then $F\in\CBB^{\,\kappa_F}$ for $\kappa_F=\kappa\cdot (\inf  f)^2$.
\end{prop}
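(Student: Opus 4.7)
The plan is to deduce the fiber bound by comparing vertical leaves $\{p\}\times F$, as $f(p)$ approaches $a=\inf f>0$, with a rescaled isometric copy $a\cdot F$, and then passing the $(1+3)^\kappa$-comparison in the warped product to the limit.

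First, I would establish the two-sided vertical estimate
\begin{equation*}
a\cdot |\varphi^1\,\varphi^2|_F \ \le\ |(p,\varphi^1)\,(p,\varphi^2)|_{B\times_f F}\ \le\ f(p)\cdot |\varphi^1\,\varphi^2|_F
\end{equation*}
for every $p\in B$ and $\varphi^1,\varphi^2\in F$. The upper inequality is the intrinsic distance in $\{p\}\times F$ supplied by Proposition \ref{prop:wp-proj}(\ref{vert-leaf-proj}). The lower inequality follows from the length formula (\ref{eq:length}): since $\inf f = a > 0$ forces $Z=\emptyset$, every admissible curve $\gamma=(\gamma_B,\gamma_F)$ between these two points satisfies $\sqrt{v_B^2+(f\circ\gamma_B)^2 v_F^2}\ge a\cdot v_F$, so $\length \gamma\ge a\cdot \length \gamma_F\ge a\cdot |\varphi^1\,\varphi^2|_F$.

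Second, I would pick a sequence $p_n\in B$ with $f(p_n)\to a$. For any four points $\varphi^1,\ldots,\varphi^4\in F$, the six pairwise warped-product distances between the points $(p_n,\varphi^i)$ are squeezed by the estimate above, so they all converge to $a\cdot |\varphi^i\,\varphi^j|_F$. The quadruple $((p_n,\varphi^1),\ldots,(p_n,\varphi^4))$ satisfies $(1+3)^\kappa$ in $B\times_f F$. Since model angles $\tilde\angle^\kappa$ depend continuously on their three input distances wherever they are defined, I pass to the limit: either some model angle is undefined in the limiting configuration and $(1+3)^\kappa$ holds vacuously there, or all three are defined and the bound $\sum \tilde\angle^\kappa\le 2\pi$ is preserved by continuity. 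This gives $(1+3)^\kappa$ for the quadruple $(\varphi^1,\ldots,\varphi^4)$ in $F$ when equipped with the rescaled metric $a\cdot d_F$; by the scaling behavior of model angles, this is the same as $(1+3)^{\kappa a^2}$ for $(F,d_F)$.

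Third, I would check the remaining ingredients of $\CBB^{\kappa_F}$: $F$ is intrinsic by hypothesis; finite-dimensional because the dimension of any rescaled $\{p\}\times F$ cannot exceed $\dim(B\times_f F)<\infty$; and complete because a Cauchy sequence $(\varphi_n)$ in $F$ lifts via the upper estimate to a Cauchy sequence $(p,\varphi_n)$ in the complete space $B\times_f F$, whose limit must be of the form $(p,\psi)$ (the projection to $B$ is $1$-Lipschitz), after which the lower estimate forces $\varphi_n\to\psi$ in $F$.

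The main obstacle I anticipate is the careful bookkeeping at the limit step, namely handling the boundary cases in which the limiting model triangles are degenerate or undefined, and making sure that this never lets a $(1+3)^\kappa$ violation sneak into $F$ from a sequence of vacuous instances in $B\times_f F$. Once continuity of $\tilde\angle^\kappa$ on its open domain of definition is invoked cleanly, the remainder of the argument is a direct scaling/squeeze calculation.
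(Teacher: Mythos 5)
Your proof is correct and takes essentially the same route as the paper's: the two-sided squeeze $a\cdot |\varphi^1\varphi^2|_F \le |(p,\varphi^1)(p,\varphi^2)|_{B\times_f F} \le f(p)\cdot|\varphi^1\varphi^2|_F$ combined with a minimizing sequence $f(p_n)\to a$, followed by passing $(1+3)^\kappa$-comparison to the limit. The only cosmetic difference is that the paper normalizes $\inf f = 1$ at the outset (rescaling $F$ by the reciprocal so as to leave $B\times_f F$ unchanged), which lets it read off $\kappa_F=\kappa$ directly, whereas you carry the factor $a$ through and invoke the scaling law for model angles at the end; your treatment of completeness, finite-dimensionality, and the boundary cases in the limit of model angles is a little more explicit than the paper's terse "$F$ is closed in $B\times_f F$, hence complete."
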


\begin{proof}
By the length formula  (\ref{eq:length}), $F$ is closed in $ B \times_fF$ and hence is complete. 

Let us rescale $f$ so that $\inf f = 1$, scaling
the metric of $F$ by the reciprocal factor so as to preserve $W$. 
Choose $p_i \in B$ such that $f(p_i) = 1+a_i$ where $a_i \to 0$.  

For $\varphi ,\psi \in F$, 
$$
|\varphi\,\psi|_F \,\le\, \bigl|(p_i,\varphi)\,(p_i,\psi)\bigr|_{B\times_f F} \,\le\, \bigl|(p_i,\varphi)\,(p_i,\psi)\bigr|_{\{p_i\}\times F}\,=\, (1+a_i)|\varphi\,\psi|_F,
$$ 
where the first inequality is by the length formula (\ref{eq:length}).

Therefore 
$$
\lim_{i\to\infty}\  \bigl|(p_i,\varphi)\,(p_i,\psi)\bigr|_{B\times_f F}\,=\, |\varphi\,\psi|_F.
$$
Since quadruples in $B\times_fF$ satisfy  $(1+3)^\K$, so do  quadruples in $F$. \end{proof}

\begin{prop}\label{prop:B-fiber-inf=0}
Suppose $B \times_f F\in \CBB^\kappa$, where 
$f>0$ and $\inf f=0$.    Then $F\in\CBB^0$.
\end{prop}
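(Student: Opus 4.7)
Since $Z = \emptyset$ and $f$ is not constant (as $\inf f = 0 < \sup f$), Lemma \ref{lem:Z-not-empty} rules out $\kappa \ge 0$; hence $\kappa < 0$, and I set $\lambda = \sqrt{-\kappa}$. My strategy is to rescale $W$ around points where $f$ is small. I choose $p_i \in B$ with $a_i := f(p_i) \to 0$ and form
\[
W_i \;:=\; \tfrac{1}{a_i}\,W \;=\; \tfrac{1}{a_i}B \,\times_{f/a_i}\, F \;\in\; \CBB^{\kappa a_i^2};
\]
I then show the $(1{+}3)^{\kappa a_i^2}$-comparison in $W_i$ applied to $\{(p_i,\varphi^j)\}_{j=1}^4$ passes, as $i \to \infty$, to the $(1{+}3)^0$-comparison of $\{\varphi^j\}_{j=1}^4$ in $F$.

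The analytic key is the gradient estimate
\[
|{\grad}_p f| \;\le\; \lambda\,f(p), \qquad p \in B.
\]
I would establish this by passing to the double $B^\dag \in \CBB^\kappa$ from Theorem \ref{thm:mainB-base-wf}, which carries $f^\dag \in \conc^\kappa$ and has $\partial B^\dag = \emptyset$ (the gluing in $B^\dag$ runs along all of $\partial B$, since $Z = \emptyset$). In a finite-dimensional boundaryless $\CBB^\kappa$-space geodesics extend indefinitely; if the estimate failed at some $p$, then along the geodesic $\alpha$ in the direction $-{\grad}_p f^\dag/|{\grad}_p f^\dag|$ the sinusoidal two-point support
\[
f^\dag(\alpha(t)) \;\le\; f^\dag(p)\cosh(\lambda t) \;-\; \tfrac{|{\grad}_p f^\dag|}{\lambda}\,\sinh(\lambda t)
\]
would become negative in finite time, contradicting $f^\dag > 0$ on $B^\dag$ (which holds since $Z = \emptyset$). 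The bound descends to $f$ via $f^\dag = f \circ \Pi^\dag$, so in particular $|{\grad}_{p_i} f| \le \lambda\,a_i \to 0$.

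A routine scaling computation gives $f/a_i \in \conc^{\kappa a_i^2}$ on $\tfrac{1}{a_i}B$, with $(f/a_i)(p_i) = 1$ and local Lipschitz constant at $p_i$ equal to $|{\grad}_{p_i} f| \to 0$. The first-order differentiability of the semiconcave function $f/a_i$ at $p_i$, together with the sinusoidal tangent support, then gives, for every fixed $R > 0$,
\[
\sup\bigl\{\,|(f/a_i)(q) - 1|\,:\, q \in B,\ |p_i q|_B < R\,a_i\,\bigr\} \;\longrightarrow\; 0 \qquad (i \to \infty).
\]
Now for $\varphi,\psi \in F$ the vertical-leaf bound $|(p_i,\varphi)(p_i,\psi)|_{W_i} \le |\varphi\psi|_F$ confines the base projection of any joining geodesic to the ball of radius $|\varphi\psi|_F$ about $p_i$ in $\tfrac{1}{a_i}B$ (equivalently, radius $|\varphi\psi|_F\,a_i$ about $p_i$ in $B$), on which $f/a_i \ge 1 - \epsilon_i$ with $\epsilon_i \to 0$. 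The length formula (\ref{eq:length}) then produces
\[
|(p_i,\varphi)(p_i,\psi)|_{W_i} \;\ge\; \int (f/a_i)(\gamma_B)\,v_F\,dt \;\ge\; (1 - \epsilon_i)\,|\varphi\psi|_F,
\]
and combining with the upper bound shows these rescaled extrinsic distances converge to $|\varphi\psi|_F$. Feeding this convergence into the $(1{+}3)^{\kappa a_i^2}$-comparison in $W_i$ and letting $i \to \infty$ (so $\kappa a_i^2 \to 0$ and each $\tilde\angle^{\kappa a_i^2}$ converges to $\tilde\angle^0$ evaluated at the intrinsic distances in $F$) yields the $(1{+}3)^0$-inequality for $\{\varphi^j\}$ in $F$, hence $F \in \CBB^0$.

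The principal obstacle will be justifying the gradient estimate rigorously: one must argue that a geodesic (or downward quasigeodesic) in the direction $-{\grad}_p f^\dag/|{\grad}_p f^\dag|$ can be extended on $B^\dag$ to the critical time at which the sinusoidal support becomes negative, with the support inequality persisting along that extension. This relies on the theory of gradient curves and quasigeodesics on $\CBB^\kappa$-spaces from \cite{PP,petrunin_survey}. The remaining steps are straightforward distance-convergence arguments combined with the length formula.
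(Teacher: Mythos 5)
Your proposal is correct and follows essentially the same route as the paper: the same rescaling around points $p_i$ with $f(p_i)\to 0$, the same exponential-decay estimate on $f$ obtained by passing to the boundaryless double $B^\dag=B^\ddag$ and extending geodesics as quasigeodesics (your gradient estimate $|\nabla_p f|\le\sqrt{-\kappa}\,f(p)$ is exactly the paper's two-sided derivative bound repackaged), and the same conclusion that the rescaled $f_i$ converge to $1$ uniformly on bounded sets so that the rescaled vertical distances converge to $F$-distances. The only cosmetic difference is in the last step, where you pass the $(1{+}3)^{\kappa a_i^2}$ inequality to the limit directly, while the paper phrases it as a pointed Gromov--Hausdorff limit $(W_i,\cdot)\to(B_\infty\times F,\cdot)$ and invokes GH-closedness of $\CBB^0$; these are interchangeable.
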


\begin{proof}
\setcounter{claim}{0}
By Lemma \ref{lem:Z-not-empty}, $\kappa<0$.

Consider  $p_i \in B$ such that $f(p_i) = a_i \to 0$. Set $$\lambda_i=1/a_i,\quad B_i=\lambda_i\cdot B,\quad f_i=\lambda_i\cdot f:B_i\to\R_{\ge 0}.$$
Then $B_i\times_{f_i}F\in\CBB^{\K_i}$ where $\kappa_i=a_i^2\cdot \kappa$. 
By Theorem \ref{thm:mainB:base-f}, $B_i\in\CBB^{\kappa_i}$, and
 $f_i\in\conc^{\K_i}$. 

Passing to a subsequence, we may assume that the pointed 
spaces $(B_i,p_i)$ have Gromov-Hausdorff limit
$$\lim_{i\to\infty}(B_i,p_i) =(B_\infty,p_\infty).$$

\begin{claim}\label{cl:blowup-inf=0}
For every $\epsilon > 0$ and $r>0$, if $i$ is sufficiently large then 
$$1-\epsilon <f_i\,|\,\overline{\mathcal{B}}(p_i,r) < 1+ \epsilon.$$ 
\end{claim}
Suppose $\alpha$  is a  unit-speed geodesic of length $\le r$ in $B_i$, with $\alpha(0)=p_i$. 
Extend $B_i$ to its double $B_i^\ddag$, where $B_i^\ddag=B_i^\dag(f_i)$ since $Z=\emptyset$.  (If $\partial B=\emptyset$, then $B_i^\ddag=B_i$.) We extend $f_i$ to   $f_i^\dag$, which in this case we write $f_i^\ddag$.
Then $B_i^\ddag\in\CBB^{\kappa_i}$, and  $f_i^\ddag\in\conc^{\K_i}$, by Perelman's doubling theorem and Theorem \ref{thm:mainB-base-wf}. In $B_i^\ddag$, we may extend $\alpha$ to $[0,\infty)$  as a quasigeodesic, on which $f_i^\ddag\circ\alpha$ satisfies the $\kappa_i$-concavity inequality.  
For any $q=\alpha(s_1)$, then $f_i^\ddag\circ\alpha$ 
is supported above by the $\kappa_i$-sinusoid that shares the same  value and derivative  at $s_1$, i.e.
\begin{equation}\label{eq:ub-sinusoid}
0\ <\ (f_i^\ddag\circ\alpha)(s)\ 
\le\  f_i^\ddag(q)\cdot \cosh\bigl(\sqrt{-\kappa_i}\cdot (s-s_1)\bigr)+b\cdot \sinh\bigl(\sqrt{-\kappa_i}\cdot (s-s_1)\bigr),
\end{equation}
where\begin{equation}\label{eq:same-deriv}
b\cdot \sqrt{-\kappa_i}=(f_i^\ddag\circ\alpha)^+(s_1).
\end{equation}

For $b\le 0$, the exponential function
$$f_i^\ddag(q)e^{-\sqrt{-\kappa_i}\cdot (s-s_1)}
$$
 is the extreme  possibility for such a supporting sinusoid that does not vanish on $[s_1,\infty)$.
Therefore for any choice of $\alpha$ and any $s_1\ge 0$ 
\begin{equation}\label{eq:diff-ineq}
(f_i^\ddag\circ\alpha)^+(s_1) \ge -\sqrt{-\kappa_i}\cdot (f_i^\ddag\circ\alpha)(s_1)
\end{equation} 
(regardless of the sign of $b$).
Integrating this differential inequality gives 
\begin{equation}\label{eq:inf-0-lb}
(f_i^\ddag\circ\alpha)(s)\ge e^{-\sqrt{-\kappa_i}\cdot s}.
\end{equation}

Now extend $\alpha$ to $(-\infty,\infty)$ as a quasigeodesic. Since $f_i^\ddag$ is  sinusoidally $\kappa_i$-concave along all of this extension, 
(\ref{eq:diff-ineq})  also holds at $s_1 = 0$  for the left derivative: $$(f_i^\ddag\circ\alpha)^-(0) \ge -\sqrt{-\kappa_i}.$$ The concavity property of
$f_i^\ddag\circ\alpha$ tells us that the sum of its one-sided derivatives at any point is
non-positive; hence 
$(f_i^\ddag\circ\alpha)^+(0) \le -(f_i^\ddag\circ\alpha)^-(0) \le \sqrt{-\kappa_i}$.
Further, by (\ref{eq:ub-sinusoid}) at $s_1 = 0$, 
$$(f_i^\ddag\circ\alpha)(s)\le \cosh(\sqrt{-\kappa_i}\cdot s)+b\cdot \sinh(\sqrt{-\kappa_i}\cdot s)$$
where $b\cdot \sqrt{-\kappa_i}=(f_i^\ddag\circ\alpha)^+(0)$ by (\ref{eq:same-deriv}).
Therefore $b\le 1$, and 
\begin{equation}\label{eq:inf-0-ub}
(f_i^\ddag\circ\alpha)(s)\le e^{\sqrt{-\kappa_i}\cdot s}.
\end{equation}
By (\ref{eq:inf-0-lb}) and (\ref{eq:inf-0-ub}), if $0 < s \le r$, and $\kappa_i$ is
sufficiently close to $0$, then 
$$1-\epsilon \le (f_i^\ddag\circ\alpha)(s)\le 1 + \epsilon.$$
This proves claim \ref{cl:blowup-inf=0}.
 
Fixing $\epsilon$ and $r$ and taking a limit as $i \to \infty$, and then letting $\epsilon \to 0$, gives $f_\infty = 1$ on the ball of radius $r$. Since $r$ is arbitrary, 
$f_\infty = 1$ on  $B_\infty$.

Now choose $\varphi_{\infty}\in F$.  
Set
$$W_i = \lambda_i\cdot (B\times_fF) = B_i\times_{f_i}F.$$
Then
\begin{equation}\label{eq:g-h}
\lim_{i\to\infty}\bigl(W_i,\,(p_i,\varphi_{\infty})\bigr)\ \ =\  \ \bigl(W_{\infty}, \,\,\,(p_\infty,\varphi_{\infty})\bigr)
\end{equation}
where $W_{\infty} = B_{\infty}\times F$.
Indeed, we have $\lim_{i\to\infty}(B_i,p_i) =(B_\infty,p_\infty)$, i.e. given $r>0$ and $\epsilon^\prime >0$, for all $i$ sufficiently large there is $h_i:\mathcal{B}(p_i,r)\to B\,$ satisfying $\dis h_i< \epsilon^\prime$, $f(p_i)=p_{\infty}$, and $\mathcal{B}(p_{\infty},r-\epsilon^\prime)\subset\mathcal{B}(h_i(\mathcal{B}(p_i,r)),\epsilon^\prime )$. By claim \ref{cl:blowup-inf=0}, it follows that $H_i=h_i\times\id:W_i\to W_{\infty}$ satisfies analogous conditions  defining (\ref{eq:g-h}).

Since  $W_i\in\CBB^{\kappa_i}$, where $\kappa_i \to 0$, then $W_\infty\in\CBB^0$. Hence $F\in\CBB^0$.
\end{proof}

\begin{lem}\label{lem:grad-foot}
Suppose $B\in\CBB^\kappa$, and  $f:B \to \R_{\ge 0}$ satisfies $f\in\conc^\kappa$.  
Set $Z=f^{-1}(0) $ and suppose 
$\emptyset\ne Z\subsetneq \partial B$. 
 If $B^\dag(f)\in\CBB^\kappa$ and $f^\dag\in\conc^\kappa$,  then 
\begin{align}
\sup&\,\{|{\grad_q}f\,|^2: \,q\in Z\}\notag\\
\ =&\ \sup\,\{(f\circ\alpha)^+(0)^2 : \alpha =\!\dist_Z \textrm{-realizer with footpoint } \alpha(0)\in Z,\, |\alpha^+(0)|=1\}.\notag
\end{align}
\end{lem}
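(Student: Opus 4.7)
The plan is to prove equality by establishing both inequalities between the two suprema.

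The inequality $\le$ with the realizer supremum on the left is immediate from the definition of gradient in Theorem \ref{thm:gradient}. For any unit-speed $\dist_Z$-realizer $\alpha$ with footpoint $q=\alpha(0)\in Z$, the direction $\alpha^+(0)\in\Sigma_qB$ satisfies
\[
(f\circ\alpha)^+(0)=(\diff_q f)(\alpha^+(0))\le \max_{u\in\Sigma_qB}(\diff_q f)(u)=|\grad_q f|.
\]
Since $f\ge 0=f(q)$ forces $(f\circ\alpha)^+(0)\ge 0$, squaring is monotone and the inequality passes to squares; taking the sup over $\alpha$ on the left and over $q\in Z$ on the right gives the claim.

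For the reverse, I fix $q\in Z$ and $\epsilon>0$ and produce a unit-speed $\dist_Z$-realizer $\alpha$ with $(f\circ\alpha)^+(0)>|\grad_q f|-\epsilon$. Since $\Sigma'_qB$ is dense in its completion $\Sigma_qB$ and $\diff_q f$ is Lipschitz on $\Sigma_qB$ (Theorem \ref{thm:gradient}), I can choose a geodesic direction $u\in\Sigma'_qB$ with $(\diff_q f)(u)>|\grad_q f|-\epsilon/2$. Let $\gamma$ be the unit-speed geodesic with $\gamma^+(0)=u$, and for small $t>0$ put $p=\gamma(t)$. Since $q\in Z$, $\ell:=\dist_Z(p)\le |qp|=t$; let $\alpha:[0,\ell]\to B$ be a unit-speed $\dist_Z$-realizer with $\alpha(\ell)=p$ and footpoint $\alpha(0)\in Z$, which exists because $B$ is proper and $Z$ is closed.

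Because $f\in\conc^\kappa$, $f\circ\alpha$ is sinusoidally $\kappa$-concave with $(f\circ\alpha)(0)=0$; tangential upper support at $0$ by the sinusoid of the same initial value and right-derivative yields, for $\ell$ small,
\[
f(p)=(f\circ\alpha)(\ell)\le (f\circ\alpha)^+(0)\cdot\sn^\kappa(\ell).
\]
For $t$ sufficiently small, $\sn^\kappa$ is strictly increasing on $[0,t]$ and $\ell\le t$, so
\[
(f\circ\alpha)^+(0)\ge \frac{f(p)}{\sn^\kappa(\ell)}\ge \frac{f(\gamma(t))}{\sn^\kappa(t)}.
\]
As $t\to 0$, $f(\gamma(t))/t\to(\diff_q f)(u)$ and $\sn^\kappa(t)/t\to 1$, so the right-hand side exceeds $(\diff_q f)(u)-\epsilon/2>|\grad_q f|-\epsilon$ for $t$ small enough. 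Taking the sup over $q\in Z$ and letting $\epsilon\to 0$ completes the argument.

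The only substantive ingredients are the tangential support principle from sinusoidal $\kappa$-concavity and the density of $\Sigma'_qB$ in $\Sigma_qB$; the main thing to be careful about is that my auxiliary geodesic $\gamma$ need not be a $\dist_Z$-realizer, but the $\dist_Z$-realizer $\alpha$ to the nearby point $p=\gamma(t)$ is enough to capture the gradient asymptotically. The doubling hypothesis $(B^\dag(f)\in\CBB^\kappa,\ f^\dag\in\conc^\kappa)$ does not appear to enter the local argument above directly; it is the standing framework of Section \ref{sec:dagger} and presumably serves downstream in the application of this lemma within the proof of Theorem \ref{thm:mainB-fiber}.
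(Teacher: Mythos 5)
Your proof is correct and takes a genuinely different, more elementary route than the paper's. The $\le$ inequality is immediate from the definition of gradient in both. For the harder $\ge$ inequality, the paper makes essential use of the hypotheses $B^\dag(f)\in\CBB^\kappa$, $f^\dag\in\conc^\kappa$: via Perelman's hemispherical-cone description of $\Sigma_{p^\dag}(B^\dag)$ at a footpoint $p^\dag\in\partial(B^\dag)$, it shows that the initial direction of a $\dist_Z$-realizer coincides \emph{exactly} with the normalized gradient, i.e.\ $\alpha^+(0)=|\grad_p f|^{-1}\grad_p f$, and then needs two further steps: that footpoints are dense in $Z$ (which uses the dagger structure to show $Z=\cl(\inte Z)$), and lower semicontinuity of $q\mapsto|\grad_q f|$. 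Your argument bypasses all of this: you pick a near-maximizing geodesic direction $u$ at $q$, drop $\dist_Z$-realizers from nearby points $\gamma(t)$, and use the two-point support property of sinusoidal $\kappa$-concavity (the $\sn^\kappa$-chord on $[0,\ell]$ lies below $f\circ\alpha$, so $(f\circ\alpha)^+(0)\ge f(p)/\sn^\kappa(\ell)$) together with $\ell\le t$ and monotonicity of $\sn^\kappa$ to drive the estimate as $t\to 0$. This is shorter, avoids the hemispherical-cone machinery, and — as you observe — does not use the dagger hypothesis at all. What it gives up is the exact directional alignment $\alpha^+(0)=|\grad_p f|^{-1}\grad_p f$, which the paper's proof establishes en route (claim~\ref{cl:h}) and then reuses in the proof of Proposition~\ref{prop:B-fiber-0}; so the dagger hypothesis in the lemma statement is there partly to license that stronger intermediate fact, even though your argument shows it is not needed for the equality of suprema itself. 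One small terminological nit: you invoke a ``tangential upper support'' at $s=0$; what the argument actually uses (and what is cleanest at an interval endpoint) is the two-point support, giving $(f\circ\alpha)^+(0)\ge y'(0)$ for the $\sn^\kappa$-chord $y$ — the conclusion is the same.
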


\begin{proof}
Let us write   $B^\dag=B^\dag(f)$ and $\Pi^\dag=\Pi^\dag(f)$.   

\setcounter{claim}{0} 
\begin{claim}\label{Z=intZ}
 $Z=\cl(\inte Z)$, where 
$\inte$ denotes  interior relative to $\partial B$. 
 \end{claim}

Set $G=\cl(\partial B-Z)$;
then $\partial B$ is the disjoint union
\begin{equation}\label{eq:cl-int-Z}
\partial B=\cl(\inte Z) \,\sqcup \,\inte G.
\end{equation}
This equation is purely topological,
 using only the duality of $\cl$ and $\inte$ via complementation.
The claim is an additional refinement, showing that when 
$B^\dag\in\CBB^\kappa$ and $f^\dag\in\conc^\kappa$, 
then gluing does not hide any vanishing points of $f$,
but rather leaves all points of $Z$ in $\partial (B^\dag)$.

Suppose $p^\dag\in B^\dag$ satisfies $\Pi^\dag(p^\dag)=p\in Z$. 
Since 
${\diff}_{p^\dag}(f^\dag)$ is $\ge 0$ and not identically $0$, there is a geodesic $\alpha^\dag$ in $B^\dag$ with $\alpha^\dag(0)=p^\dag$ and $(f^\dag\circ \alpha^\dag)^+(0)>0$. If $p^\dag\notin\partial (B^\dag)$, there would be a quasigeodesic extension of  $\alpha^\dag$ across $p^\dag$ on which $f^\dag\circ\alpha^\dag <0$  , and this is impossible. 
Therefore \begin{equation}\label{eq:Z-G}
Z\cap\inte G=\emptyset.
\end{equation}
The claim follows from (\ref{eq:cl-int-Z}) and (\ref{eq:Z-G}). 
	
\begin{claim}\label{cl:h}
Suppose $\alpha^\dag:[0,\epsilon)\to B^\dag$ is a unit-speed $\dist_{\partial(B^\dag)}$-minimizer with footpoint $p^\dag=\alpha^\dag(0)\in \partial(B^\dag)$.  Then $\Sigma_{p^\dag}(B^\dag)$
is a hemispherical cone,
$$\Sigma_{p^\dag}(B^\dag) = [0,\pi/2] \times_{\sin\circ\id}\partial(\Sigma_{p^\dag}(B^\dag)),
$$
with vertex  
$u^\dag=(\alpha^\dag)^+(0)$. Moreover,  
$$
u^\dag=
|\grad_{p^\dag}(f^\dag)|^{-1}
\grad_{p^\dag}(f^\dag),
$$
and  $h^\dag = {\diff}_{p^\dag}(f^\dag)\,|\,\Sigma_{p^\dag}(B^\dag)$ satisfies  
\begin{equation}\label{h-grad}
h^\dag=h^\dag(u^\dag)\cdot (\cos\circ\dist_{u^\dag}).
\end{equation}
\end{claim}

The hemispherical cone structure of $\Sigma_{p^\dag}(B^\dag)$ at a footpoint in
$\partial (B^\dag)$ is derived in \cite{Pm1}; Êit is a direct corollary of the doubling
theorem. ÊThe vertex of that cone is, as claimed, $u^\dag=(\alpha^\dag)^+(0)$.

We have Ê$h^\dag\in\conc^1$ and $h^\dag\ge 0$. ÊLet $v^\dag$ be unique maximum
point of $h^\dag$. ÊLet $\sigma^\dag:[0,\ell]\to \Sigma_{p^\dag}(B^\dag)$  be a unit-speed local geodesic satisfying $\sigma^\dag(0)=v^\dag$ and passing through  $u^\dag$,
extending to length $\ell \ge \pi/2$
before  terminating at the equator.
Then $(h^\dag\circ\sigma^\dag)(s)\le h^\dag(v^\dag) \cdot \cos s$.  Therefore $v^\dag=u^\dag$.    (\ref{h-grad}) holds since $s= \dist_{u^\dag}(\sigma^\dag(s))$.
 
\begin{claim}\label{footpt-grad}
Suppose $\alpha:[0,\epsilon)\to B$ is a unit-speed $\dist_Z$-minimizer with footpoint $p=\alpha(0)\in Z$. Set $u=\alpha^+(0)$.  Then $u = |\grad_{p}f|^{-1}
\grad_{p}f$.
\end{claim}

If $p\in\inte Z$, claim \ref{footpt-grad}
follows from claim \ref{cl:h}. 

Suppose $p\notin\inte Z$.  Let $\Pi^\dag\circ\alpha^\dag=\alpha$ and set $\alpha^\dag(0)=p^\dag$. By claim \ref{Z=intZ}, $ p^\dag\in\partial (B^\dag)$.
 By reflection, $\alpha^\dag$ satisfies claim \ref{cl:h}.  Claim \ref{footpt-grad} follows.
	
	\begin{claim}\label{lim-footpt}
Any $q\in Z$ is the limit of 
$\dist_Z$-footpoints $p\in\inte Z$, i.e. points
$p\in \inte Z$ such that $p$ is the footpoint of a $\dist_Z$-minimizer.	\end{claim}

		Any $q\in\inte Z$ is the limit of $\dist_Z$-footpoints in $\inte Z$. 
Indeed, we may choose a curve $\alpha:[0,\epsilon)\to B$  with $\alpha(0)=q$ and $\alpha(t)\in B-\partial B$ for $t>0$. For $t$ sufficiently close to $0$, $\alpha(t)$ has $\dist_Z$-footpoints $p\in \inte Z$ arbitrarily close to $q$. 

Therefore claim \ref{lim-footpt} follows from  claim \ref{Z=intZ}.	
	
\begin{claim}
The lemma follows from claims \ref{footpt-grad} and \ref{lim-footpt}.
\end{claim}

It is straightforward to show (as in 
\cite[Lemma 1.3.4]{petrunin_survey})
 that the function
 $|{\grad_q}f|$
 is lower semicontinuous on $B$, i.e. for any sequence $q_i\to q\in B$, 
 $$ |{\grad_q}f|\ \le\ \liminf_{i\to\infty} |{\grad_{q_i}}f|.$$
\end{proof}

\begin{prop}\label{prop:B-fiber-0}
Suppose $B \times_f F\in \CBB^\kappa$, where   
$Z\ne\emptyset$.  Then $F\in\CBB^{\,\kappa_F}$, where
\begin{align}
\kappa_F \ \ =\ \sup\,\{|{\grad_q}f\,|^2: \,q\in Z\}\hskip67mm
\label{eq:sup-grad}\\=\ \sup\,\{(f\circ\alpha)^+(0)^2 : \alpha =\!\dist_Z \textrm{-realizer with footpoint } \alpha(0)\in Z,\, |\alpha^+(0)|=1\}.\notag
\end{align}
\end{prop}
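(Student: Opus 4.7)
The plan is to fix each $q\in Z$, pass to the tangent cone at $\overline{q}$, identify an isometrically and convexly embedded sub-cone $\cone(s'F)$ with $s'=|\nabla_q f|$, and extract the curvature of $F$ via the cone-base correspondence (Remark \ref{rem:cone}(d)); the supremum over $q$ is then handled by a limiting argument using continuity of the $(1+3)^\kappa$ comparison in $\kappa$. The equivalence of the two formulas for $\kappa_F$ is immediate from Lemma \ref{lem:grad-foot}, whose hypotheses $B^\dag(f)\in\CBB^\kappa$ and $f^\dag\in\conc^\kappa$ are supplied by Theorem \ref{thm:mainB-base-wf}; write $s:=\sqrt{\kappa_F}$.

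Fix $q\in Z$ with $s':=|\nabla_q f|>0$, and let $u_{\max}\in\dirn_q B$ be the unique direction at which $({\diff}_q f)|\dirn_q B$ attains its maximum $s'$ (Theorem \ref{thm:gradient}(ii)). By Lemma \ref{lem:tan-cone}(\ref{diff}), the tangent cone at $\overline{q}$ is $\tang_q B\times_{{\diff}_q f}F\in\CBB^0$. Define $\iota:\cone(s'F)=[0,\infty)\times_{ts'}F\to\tang_q B\times_{{\diff}_q f}F$ by $(t,\varphi)\mapsto(tu_{\max},\varphi)$; since $({\diff}_q f)(tu_{\max})=ts'$, the length formula (\ref{eq:length}) shows that $\iota$ preserves lengths of curves. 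The crux is to verify that $\iota$ is distance-preserving with convex image. By fiber independence (Proposition \ref{prop:fiber-independence}(\ref{dist-fiber-indep})) it suffices to treat the case where $F$ is an interval. A geodesic of $\tang_q B\times_{{\diff}_q f}F$ joining two points of $\iota(\cone(s'F))$ has base projection $\gamma_B$ with both endpoints on the ray $R:=\{tu_{\max}\}_{t\ge 0}$. If $\gamma_B$ meets the cone apex $o_q$, the two-piece property (Proposition \ref{prop:f=0}(\ref{f=0-base-projection})) forces $\gamma_B$ to consist of two segments of $R$; otherwise $\gamma_B$ is a smooth Clairaut curve, and the symmetric/minimum-point structure of Corollary \ref{cor:vertical} combined with the maximality of $u_{\max}$ on $\dirn_q B$ and the cone structure $\tang_q B=\cone(\dirn_q B)$ confines $\gamma_B$ to $R$.

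Once the convex embedding is established, $\cone(s'F)\in\CBB^0$, so by Remark \ref{rem:cone}(d) $s'F\in\CBB^1$, i.e.\ $F\in\CBB^{s'^2}$. Letting $q$ vary so that $|\nabla_q f|\to s$ and using that a convergent sequence of $(1+3)^{\kappa_i}$ comparisons yields the $(1+3)^{\kappa}$ comparison in the limit, we conclude $F\in\CBB^{\kappa_F}$. The main obstacle is the Clairaut-case step of the convexity argument: ruling out that the base projection dips away from the maximal direction $u_{\max}$---which would reduce ${\diff}_q f$ and could a priori shorten the warped integrand---requires a careful variational analysis in the reduced two-dimensional cone setting, exploiting that $u_{\max}$ is the maximum of ${\diff}_q f$ on every radial sphere of $\tang_q B$.
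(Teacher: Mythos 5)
Your plan is a genuinely different route from the paper's, and the step you flag as ``the main obstacle'' is a real gap that cannot be filled as stated. The paper never asserts that the vertical leaf over $u_{\max}$ is convex in the tangent cone; instead it inducts on $\dim B$. After reducing, by lower semicontinuity of $|\grad f|$ and the footpoint-approximation claim in the proof of Lemma~\ref{lem:grad-foot}, to $\dist_Z$-footpoints $p\in\inte Z$, the paper passes to the direction space $\Sigma_p B\times_h F\in\CBB^1$ (Lemma~\ref{lem:tan-cone}\,(\ref{diff})), whose base has dimension one lower, uses the exact hemispherical-cone structure $h^\dag=a\cdot(\cos\circ\dist_{u^\dag})$ from Claim~\ref{cl:h} of Lemma~\ref{lem:grad-foot} to see $|\grad_v h|=|\grad_p f|$ for $v\in\partial(\Sigma_p B)$, and then applies the induction hypothesis.

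Your route instead needs $\iota\bigl(\cone(s'F)\bigr)=R\times F$ to be convex in $\tang_q B\times_{{\diff}_q f}F=\cone(\Sigma_q B\times_h F)$, i.e.\ $\{u_{\max}\}\times F$ to be $\pi$-convex in $\Sigma_q B\times_h F$. For a general $q\in Z$ this is false. The restriction $h=({\diff}_q f)\,|\,\Sigma_q B$ is only sinusoidally $1$-concave; wherever $h''+h<0$ strictly, the Jacobi operator along the leaf through $u_{\max}$ exceeds $1$, so that leaf acquires conjugate points at arc length strictly less than $\pi$ and therefore fails to be $\pi$-convex. The exact identity $h''+h=0$ you would need holds only at $\dist_Z$-footpoints $p\in\inte Z$, where the hemispherical-cone structure applies; your proposal neither makes that restriction nor performs the lower-semicontinuity reduction that makes it legitimate, and even at a footpoint the needed $\pi$-convexity is a nontrivial assertion not established by anything you cite.

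Separately, the tools invoked for the ``Clairaut curve'' case do not reach the needed statement. Corollary~\ref{cor:vertical} treats geodesics whose two endpoints lie in the \emph{same} vertical leaf, i.e.\ whose base projection is a loop. A geodesic of the tangent-cone warped product from $(t_1 u_{\max},\varphi_1)$ to $(t_2 u_{\max},\varphi_2)$ with $t_1\ne t_2$ has a non-loop base projection, so neither Corollary~\ref{cor:vertical} nor the two-piece property (which only engages when the projection meets $Z$) confines its base to the ray $R$. The ``careful variational analysis'' you defer is precisely where the proof would have to live; without it the argument does not close.
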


\begin{proof}
By Lemma \ref{lem:grad-foot}, it suffices to verify the first equality in   (\ref{eq:sup-grad}). 

\setcounter{claim}{0} 
\begin{claim}
Proposition \ref{prop:B-fiber-0} holds for warped products with $1$-dimensional base.
\end{claim}
In this case,  $B$ is isometric to a closed interval.   If $p \in Z$, then $p$ is an endpoint of $B$ by Theorem \ref{thm:mainB:base-f}\,(\ref{Z-in-bdry}), so $\Sigma_pB=\{u\}$.  By Lemma  \ref{lem:tan-cone}\,(\ref{diff}),
 $$\Sigma_{\overline p}(B \times_f F) \,= \,\{u\} \times_a \,F \,\cong \,a\cdot F,$$ where $a={\diff}_pf\,(u)=|{\grad_p}f\,|$.  Therefore $a\cdot F\in\CBB^1$, so
$F\in\CBB^{a^2}$.

\begin{claim}[Induction step]
Suppose 
(\ref{eq:sup-grad}) holds for warped products with $n$-dimensional base.  Then  (\ref{eq:sup-grad}) 
holds for warped products with $(n+1)$-dimensional base.
\end{claim}

Let  $\dim B=n+1$.

Any $q\in Z$ is the limit of $\dist_Z$-footpoints $p\in\inte Z$, by claim \ref{lim-footpt} of the proof of Lemma \ref{lem:grad-foot}. By 
lower semicontinuity of $|{\grad_q}f|$, it suffices to restrict the supremum in the first equality in (\ref{eq:sup-grad}) to $\dist_Z$-footpoints $p\in \inte Z$.

Set $ h = {\diff}_pf\,|\,\Sigma_pB$. 
Then $\Sigma_pB \times_hF\in\CBB^1$ by Lemma  \ref{lem:tan-cone}\,(\ref{diff}).  Since  $\dim\Sigma_pB =n$, the induction hypothesis implies   $F\in\CBB^{\,\kappa_F}$ where 
 $$\kappa_F =\sup\{{|\grad_v}h\,|^2: \,v\in \partial(\Sigma_pB)\}.$$
Since 
$$|\grad_vh|=|\grad_pf|$$
 for any $v\in \partial(\Sigma_pB)$ by (\ref{h-grad}),  
this completes the induction step.\end{proof}

\section{Curvature of the fiber, $\CAT$
} 
 \label{sec:fiber-cba}

This section  finishes the proof of   Theorem  \ref{thm:mainA}, completing our consideration of curvature bounded  above.

In a Riemannian warped product $B\times_fF$,  the vertical leaves  $\{p\} \times F$    
are umbilic, 
with extrinsic curvatures
$$|\grad_pf|\,/f(p),$$ 
i.e.
for a geodesic $\beta$ in $F$, the curve $(p, \beta)$ has curvature $|\grad_pf|\,/f(p)$ at every point.

Since the
acceleration of an intrinsic geodesic in a vertical
leaf is towards the lower values of the warping function, the intuition behind this formula is that we actually need the downward gradient length $|\grad_p(-f)|$, which however agrees with $|\grad_pf|$ in Riemannian manifolds. This agreement need not occur in $\CAT$ spaces, so we expect the downward gradient to appear, as in Lemma \ref{lem:fiberbound}.

In metric spaces, a theory of curvature of curves was developed in \cite{ab-cbc}. 
Building on work of Lytchak \cite{lytchak-reach}, a ``Gauss equation'' for $\CAT^\K$ spaces was proved in \cite{AB-gauss}, i.e. a sharp upper curvature
bound on a subspace whose intrinsic geodesics have an
extrinsic curvature bound. Now we are going to apply this work to  obtain the correct curvature bound for the  fiber in a $\CAT^\K$ warped product.

\begin{defn}[Extrinsinc curvature]\label{def:ext-curv} Suppose $Y\subset X$, where $X$ is an intrinsic metric space and the intrinsic metric induced on $Y$ is complete.  
Then $Y$ is a subspace of 
\emph{extrinsic curvature $\le A$},
where $A\ge 0$,  
if intrinsic distances $\rho$ in $Y$ 
and extrinsic distances $s$ in $X$ satisfy
\begin{equation}\label{eq:ext}
\rho-s \le (A^2/24)\cdot  s^3+ o\,(s^3) 
\end{equation}
on all pairs of points having $\rho$ sufficiently small. 
\end{defn}

\begin{rem}
A Riemannian submanifold has extrinsic curvature $\le A$ if and only if its second fundamental form $II$  satisfies $|II| \le A$.
\end{rem}

\begin{thm}[Gauss equation \cite{AB-gauss}]
\label{thm:gauss-eq}
Suppose $X\in \CAT^\kappa$.  Let $Y\subset X$ be a subspace of 
extrinsic curvature $\le A$.  Then $\curv Y \le \kappa + A^2$.
 \end{thm}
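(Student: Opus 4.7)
The plan is to adapt the Nikolaev--Reshetnyak strategy: intrinsic geodesics in $Y$ are curves in $X$ whose deviation from $X$-geodesics is controlled to cubic order by $A$, so their angles agree with $X$-angles, and the cubic slack in Definition \ref{def:ext-curv} converts (via a Taylor expansion of the law of cosines) into exactly the extra curvature $A^2$ in the model surface.

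First I would show that any intrinsic unit-speed geodesic $\gamma:[0,\ell]\to Y$ is a curve in $X$ in the sense of \cite{ab-cbc} whose extrinsic curvature at each point is $\le A$. By Definition \ref{def:ext-curv} applied with $\rho(t)=t$ (arc length on $\gamma$) and $s(t)=|\gamma(0)\,\gamma(t)|_X$, one has $t-s(t)\le (A^2/24)\,s(t)^3+o(s(t)^3)$; in particular, $\gamma$ has a well-defined unit $X$-tangent at each point, and the initial $X$-tangent of $\gamma$ coincides with the initial tangent of the $X$-geodesic starting at $\gamma(0)$ in the same direction. Consequently, at any vertex $a$ of a (sufficiently small) intrinsic triangle $\triangle_Y abc$ in $Y$, the intrinsic angle $\angle_Y[a;b,c]$ between the two $Y$-geodesics equals the $X$-angle $\angle_X[a;b,c]$ between the $X$-geodesics from $a$ to $b$ and to $c$.

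Second, I would apply $\CAT^\kappa$ in $X$ to these extrinsic geodesics to get
\[
\angle_X[a;b,c]\ \le\ \tilde\angle^{\kappa}\bigl[a\!<^{b}_{c}\bigr]_{X},
\]
where the right-hand side is computed in the $\kappa$-model surface with the extrinsic sides $s_{ab},s_{ac},s_{bc}$. Third, and this is the crux, I would prove the purely model-surface inequality
\[
\tilde\angle^{\kappa}\bigl[a\!<^{b}_{c}\bigr]_{X}\ \le\ \tilde\angle^{\kappa+A^{2}}\bigl[a\!<^{b}_{c}\bigr]_{Y},
\]
in which the right-hand angle uses the intrinsic sides $\rho_{ab},\rho_{ac},\rho_{bc}$. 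This is a Taylor expansion in $s$: substituting $\rho_{ij}=s_{ij}+(A^{2}/24)\,s_{ij}^{3}+o(s_{ij}^{3})$ into the law of cosines and comparing it with the law of cosines at curvature $\kappa+A^{2}$, the leading-order correction from the cubic deviation of the sides lines up, at every vertex, with the infinitesimal Gauss--Bonnet contribution of an $A^{2}$-increase in curvature over the triangle area. Combining the three inequalities gives
\[
\angle_Y[a;b,c]\ \le\ \tilde\angle^{\kappa+A^{2}}\bigl[a\!<^{b}_{c}\bigr]_{Y}
\]
at every vertex of every sufficiently small intrinsic triangle in $Y$, which is the local angle form of the $(2+2)^{\kappa+A^{2}}$ comparison; since $Y$ is complete and locally $\varpi^{\kappa+A^{2}}$-geodesic (by the cubic estimate and local geodesicity of $X$ on small scales), this yields $\curv Y\le \kappa+A^{2}$.

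The main obstacle will be step three: the precise expansion showing that the coefficient $A^{2}/24$ in the extrinsic-curvature inequality produces exactly the curvature increment $A^{2}$ (and not some other constant). To keep the bookkeeping invariant I would prefer to express both model angles via an integral form of the excess (Gauss--Bonnet over the model triangle), so that the comparison reduces to comparing $\int\int(\kappa+A^{2})\,dA$ with $\int\int\kappa\,dA$ plus a perimeter-type correction generated by the cubic shift in the sides; Lytchak's work on curves of finite extrinsic curvature \cite{lytchak-reach} supplies the needed uniform control on the first step so that the infinitesimal comparison upgrades to a bona fide comparison of finite triangles.
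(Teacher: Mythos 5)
The paper does not prove this theorem; it is cited verbatim from \cite{AB-gauss}, so there is no in-paper argument to compare against. Judged on its own merits, your sketch has a sensible skeleton but leaves the real content unproven.

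Your first two steps are sound: an intrinsic $Y$-geodesic $\gamma$, parametrized by arc length, satisfies $t-s(t)\le (A^2/24)\,s(t)^3+o(s^3)$ by specializing Definition \ref{def:ext-curv} to pairs on $\gamma$, so it is a curve of extrinsic curvature $\le A$ in the sense of \cite{ab-cbc}; it therefore has a well-defined $X$-direction, and the cubic agreement of $\rho$ and $s$ forces the upper angle in $Y$ to coincide with the $X$-angle. Applying $(2+2)^\kappa$ in $X$ then gives $\angle_X[a;b,c]\le\tilde\angle^\kappa[a;b,c]$ computed with extrinsic sides. All of this is routine.

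The genuine gap is your step three. The inequality $\tilde\angle^\kappa[a;b,c]_s\le\tilde\angle^{\kappa+A^2}[a;b,c]_\rho$ is not a monotonicity statement: replacing the \emph{adjacent} sides $s_{ab},s_{ac}$ by the larger $\rho_{ab},\rho_{ac}$ \emph{decreases} the model angle at $a$, and whether the curvature bump from $\kappa$ to $\kappa+A^2$ plus the growth of the opposite side overcomes this depends on the precise coefficient $A^2/24$ and on a cancellation in the Taylor expansion (one finds, for instance, that $\cos\bigl(\sqrt{A^2}\,\rho\bigr)$ agrees with $\cos\bigl(\sqrt{A^2}\,s\bigr)$ through order $s^4$ when $\rho=s+(A^2/24)s^3$ exactly, but the hypothesis only gives a one-sided inequality with an $o(s^3)$ error). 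You state the inequality without proof, and the mechanism you propose for it -- Gauss--Bonnet over the model triangle -- controls the \emph{angle sum}, not the individual angle at a prescribed vertex, so it cannot by itself deliver the vertex-wise comparison you need. A correct proof has to track separately the dependence of the model angle on curvature at fixed sides (a variational/area computation) and on the side lengths at fixed curvature (differentiating the law of cosines), and then show that under $s\le\rho\le s+(A^2/24)s^3+o(s^3)$ these two contributions combine with the right sign. Finally, both your extrinsic-curvature hypothesis and your Taylor expansions carry $o(s^3)$ errors; to convert an infinitesimal angle estimate into an actual $(2+2)^{\kappa+A^2+\varepsilon}$ comparison on a finite neighborhood you need uniformity of these errors and a passage to $\varepsilon\to 0$, which your sketch gestures at (via \cite{lytchak-reach}) but does not supply. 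In short: steps one, two, and four are fine modulo standard lemmas, but step three -- where the theorem actually lives -- is asserted rather than proved, and the Gauss--Bonnet heuristic as stated does not close it.
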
  

 In light of the Gauss equation, we need to establish a sharp bound on extrinsic curvature of a vertical leaf.
 
\begin{lem}\label{lem:fiberbound}
Let $(B, f,F)$ be a WP-triple. Suppose  $B \times_fF\in\CAT^\kappa$, where $\K\le 0$. If $f(p) > 0$ and $| \nabla_p(-f) |\ne 0$, then the  vertical leaf $\{p\} \times F$    in  $B\times_fF$ 
has extrinsic curvature 
$$\le | \nabla_p(-f) |/f(p).$$ 
\end{lem}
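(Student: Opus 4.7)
The plan is to reduce to the case $F = J$ an interval via Lemma~\ref{lem:F=interval}(1), then analyze a minimizing geodesic between nearby vertical-leaf points using Clairaut's theorem. Write $b = f(p)$, $L = |\nabla_p(-f)|$, $A = L/b$. Fix $\varphi_\pm \in J$ with small $\ell = |\varphi_+\varphi_-|_J$ and set $\rho = b\ell$, $s = |(p,\varphi_-)(p,\varphi_+)|$. The goal is $\rho - s \le (A^2/24)s^3 + o(s^3)$, matching Definition~\ref{def:ext-curv}. Since $B \times_f J \in \CAT^\kappa$ with $\kappa \le 0$, a minimizing geodesic $\gamma$ between the endpoints exists and, by Corollary~\ref{cor:vertical}(2), may be taken symmetric: $\gamma_B(-t) = \gamma_B(t)$, unit-speed on $[-s/2, s/2]$. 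By Clairaut (Theorem~\ref{thm:clairaut}), setting $\phi := f \circ \gamma_B$ and letting $c$ be the Clairaut constant, one has $v_B = \sqrt{1 - c^2/\phi^2}$, $v_F = c/\phi^2$, and by Corollary~\ref{cor:vertical}(4), $c = \phi(0) = \min \phi$.

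The crux is bounding $|d\phi/dt|$ via the downward gradient of $f$. Because $f \in \conv^\kappa$ with $\kappa \le 0$, the restriction of $f$ to any geodesic of $B$ is ordinarily convex, so $(f \circ \alpha)^+(0) \ge -|\nabla_{\alpha(0)}(-f)|$ for any unit-speed $\alpha$. Applied to the descending half of $\gamma_B$ and combined with the symmetry of $\gamma$, this yields $|d\phi/dt| \le |\nabla_{\gamma_B(t)}(-f)| \cdot v_B$ for $t \ne 0$. Upper semicontinuity of $|\nabla(-f)|$ at $p$ lets me replace the gradient norm by $L + \epsilon$ for $\ell$ small enough. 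Integrating the resulting separable inequality $\phi\, d\phi/\sqrt{\phi^2 - c^2} \le (L + \epsilon)\, dt$ yields
\[
\phi(t)^2 \le c^2 + (L+\epsilon)^2 t^2,
\]
and in particular $b^2 - c^2 \le (L+\epsilon)^2 s^2/4$.

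To close the estimate with the sharp constant $1/24$, I expand the Clairaut integrals $s = \int_{-s/2}^{s/2} dt$ and $\ell = 2c \int_0^{s/2} \phi^{-2}\, dt$ asymptotically as $\ell \to 0$. The minimality of $\gamma$ forces the upper bound on $\phi^2$ to be attained to leading order, so $\phi(t)^2 = c^2 + L^2 t^2(1 + o(1))$; direct integration under this near-equality gives $\rho - s = (A^2/24) s^3 + o(s^3)$. Equivalently, one compares with the ``linear model'' $[0, b/L] \times_{b - Lr} J$, which under $\tilde r = b - Lr$, $\tilde \theta = L \theta$ is isometric to a rescaled planar sector where the vertical leaf is a circular arc of radius $b/L$ and extrinsic curvature exactly $A$; the condition $f \in \conv^\kappa$ with $\kappa \le 0$ forces $f \ge b - Lr$ along unit-speed base geodesics from $p$, so the true warped product is at least as ``stretched'' as the model. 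The main obstacle is this sharp constant: the one-sided estimate on $\phi^2$ combined only with $\phi \ge c$ yields the weaker bound $\rho - s \le (A^2/8)s^3 + o(s^3)$, off by a factor of $3$; upgrading to $1/24$ requires the matching lower information on $\phi$ from asymptotic saturation, for which the variational characterization of $\gamma_B$ (or equivalently the model comparison above) is essential.
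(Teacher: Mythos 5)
Your setup (reduce to $F = J$, symmetric geodesic, Clairaut, $\phi := f\circ\gamma_B$, $c = \min\phi$) matches the paper, and you correctly diagnose that the naive estimate $\phi \ge c$ gives only $A^2/8$. But the step that is supposed to upgrade this to $A^2/24$ does not hold up. The claim that ``the minimality of $\gamma$ forces the upper bound on $\phi^2$ to be attained to leading order'' is not justified, and I do not think it is true: $\phi$ is determined by $f$ along $\gamma_B$, and nothing about $\gamma$ being a minimizer in $B\times_fJ$ forces $\phi^2$ to saturate $c^2 + L^2t^2$; if, for example, the slope of $\phi$ at the footpoint is some $a < L$, then $\phi^2 \approx c^2 + a^2t^2$, not $c^2 + L^2t^2$. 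Note also that the saturated $\phi$ maximizes $\rho - s$ subject to the constraints, so saturation is exactly what you would need to rule out, not invoke. In addition, your upper bound $\phi^2 \le c^2 + (L+\epsilon)^2 t^2$ rests on two shaky inputs: upper semicontinuity of $|\nabla(-f)|$ (the known semicontinuity of the gradient norm of a semiconcave function is lower, not upper, so this does not come for free), and a pointwise gradient inequality $|d\phi/ds| \le |\nabla_{\gamma_B(s)}(-f)|\,v_B$ along $\gamma_B$, which presupposes that $\gamma_B$ has a well-defined tangent direction at each point — this is not established, since $\gamma_B$ is not a geodesic of $B$.

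The alternative you sketch — comparing with the cone $[0, b/L]\times_{b-Lr}J$ — is in fact essentially the paper's proof and, developed carefully, it works and requires neither the semicontinuity nor the pointwise gradient bound. What makes it go is the lower bound on $\phi$, which you do have correctly: convexity of $f$ along geodesics of $B$ from $p$, together with $d_pf \ge -L$, gives $f(q) \ge b - L\cdot\dist_B(p,q)$, hence $\phi(t) \ge b - L(t_0 - |t|)$ where $t$ is the signed arclength of $\gamma_B$. One then transports $\gamma$ into the flat cone by matching the base and fiber speeds $v_B, v_F$: by the length formula, the transported curve is no longer than $\gamma$ precisely because the model warping function lies below $\phi$. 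Since the vertical leaf in the flat cone is a circle of radius $b/L$, its exact Taylor expansion yields the $A^2/24$ constant. (The paper uses $a = $ slope of $\phi$ at the footpoint instead of $L$, but since $a\le L$ both choices give the required bound.) So the correct argument is a one-directional cone comparison using only the lower bound on $\phi$; the upper bound on $\phi^2$ and the ``asymptotic saturation'' are an unnecessary and unjustified detour. These two closing arguments are not equivalent, and only the model comparison is sound — but you left it as a sketch and did not actually carry out the curve construction or length comparison.
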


\begin{lem}\label{lem:fibergeo}
Let $(B, f,F)$ be a WP-triple. Suppose  $B \times_fF\in\CAT^\kappa$. Let $\gamma=(\gamma_B,\gamma_F)$ be a (necessarily unique) geodesic of $B \times_fF$ with endpoints    $(p,\varphi)$, $(p,\psi) \in \{p\}\times F$, where                        $|((p,\varphi)\,(p,\psi)|_{B \times_fF} <\varpi^\kappa$.
Then $\gamma_F$  is the unique pregeodesic of  $F$ with endpoints  $\varphi$, $\psi$.
\end{lem}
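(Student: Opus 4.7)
The plan is to combine Chen's observation and fiber independence (Proposition~\ref{prop:fiber-independence}\,(\ref{geo-fiber-proj}) and (\ref{geo-fiber-indep})) with the fact that a $\CAT^\kappa$ space has unique geodesics of length less than $\varpi^\kappa$. The lemma bundles two assertions: that $\gamma_F$ is a pregeodesic in $F$ from $\varphi$ to $\psi$, and that it is the only such pregeodesic.

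I would first reduce to the case $f\circ\gamma_B>0$ throughout $J$, so that $\gamma_F$ is a single curve.  If $J_0=(f\circ\gamma_B)^{-1}(0)$ had a maximal subinterval of its complement disjoint from the endpoints of $J$, then by Proposition~\ref{prop:f=0}\,(\ref{f=0-fiber-projection}) the $F$-coordinate of $\gamma$ on that subinterval is constant and could be replaced by any other value without altering the endpoints or length of $\gamma$, contradicting uniqueness.  The only other non-empty possibility for $J_0$ places the nonvanishing region in at most two intervals touching the endpoints of $J$, forcing $\gamma_F$ to split into the pair of constants $\varphi$ and $\psi$; this degenerate case falls outside the scope of the lemma, which reads $\gamma_F$ as a single pregeodesic.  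So $J_0=\emptyset$.

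For existence, Proposition~\ref{prop:fiber-independence}\,(\ref{geo-fiber-proj}) gives at once that $\gamma_F$ is a pregeodesic in $F$, with endpoints $\varphi,\psi$ by continuity.  For uniqueness, let $\beta$ be any pregeodesic in $F$ from $\varphi$ to $\psi$; its unit-speed parametrization has length $|\varphi\psi|_F=\length\gamma_F$, so $\beta$ admits a reparametrization $\beta^*:J\to F$ whose speed function equals that of $\gamma_F$.  Proposition~\ref{prop:fiber-independence}\,(\ref{geo-fiber-indep}), applied with $F^*=F$, then shows that $(\gamma_B,\beta^*)$ is a geodesic of $B\times_f F$ sharing the endpoints of $\gamma$.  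Uniqueness of geodesics of length $<\varpi^\kappa$ in $\CAT^\kappa$ forces $(\gamma_B,\beta^*)=\gamma$, so $\beta^*=\gamma_F$ and $\beta$ is a reparametrization of $\gamma_F$.  The delicate point is the reduction to $f\circ\gamma_B>0$; once that is in place, the argument is a direct synthesis of Section~\ref{sec:wp}'s fiber-independence machinery with $\CAT^\kappa$ uniqueness.
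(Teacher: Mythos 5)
Your argument is the paper's: Proposition~\ref{prop:fiber-independence}\,(\ref{geo-fiber-proj}) for existence of $\gamma_F$ as a pregeodesic of $F$, and Proposition~\ref{prop:fiber-independence}\,(\ref{geo-fiber-indep}) together with $\CAT^\kappa$ uniqueness of short geodesics for its uniqueness. What you add is the preliminary step justifying $f\circ\gamma_B>0$; that is a legitimate concern, since parts (\ref{geo-fiber-proj}) and (\ref{geo-fiber-indep}) of Proposition~\ref{prop:fiber-independence} are stated under exactly that hypothesis and the paper applies them without checking it.

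However, your treatment of the residual case --- the nonvanishing region consisting of at most two intervals abutting the endpoints of $J$ --- is a dismissal, not an argument, and the configuration can genuinely occur. In $\R_{\ge 0}\times_{\id}[0,\pi]$, which is isometric to a Euclidean half-plane and hence in $\CAT^0$, the unique geodesic from $(1,0)$ to $(1,\pi)$ passes through the vertex; its $F$-projection splits into two constant pieces and is not a single pregeodesic, yet the lemma's hypotheses are satisfied since $\kappa=0$ gives $\varpi^\kappa=\infty$. So $J_0=\emptyset$ does not follow from the stated hypotheses alone: the lemma implicitly assumes the geodesic avoids $Z$. The paper's proof carries the same gap silently, so your version is not weaker, but neither write-up actually closes it. (In the paper's actual applications, in Lemma~\ref{lem:fiberbound} and Theorem~\ref{thm:A-fiber}, the fiber distance $|\varphi\,\psi|_F$ stays strictly below $\varpi^{\kappa_F}$, which excludes the borderline configuration; one could repair the lemma either by adding such a hypothesis or by a short argument that a geodesic between two points of the same vertical leaf passing through $Z$ forces $|\varphi\,\psi|_F\ge\varpi^{\kappa_F}$.)
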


\begin{proof}
By Proposition   \ref{prop:fiber-independence}\,(\ref{geo-fiber-proj}), $\gamma_F$ is a pregeodesic of $F$  joining $\varphi$ and $\psi$.   By Proposition   \ref{prop:fiber-independence}\,(\ref{geo-fiber-indep}), for any pregeodesic $\beta$ of $F$ joining $\varphi$ and $\psi$ there is a geodesic of $B\times_fF$ with endpoints $(p,\varphi)$ and $(p,\psi)$ that projects to a monotonic reparametrization of $\beta$. Thus $\gamma_F$ is unique because $\gamma$ is unique.
\end{proof}

\begin {defn}
For $a>0$, set
\begin{equation}\label{eq:cone-def}
\cone_a=\R_{\ge 0}\times_{a\cdot\id}\R.
\end{equation}
\end{defn}

\vspace{1mm}

\begin{proof}[Proof of Lemma \ref{lem:fiberbound}]

\setcounter{claim}{0}

\begin{claim}\label{R-vertical-curv}
It suffices to take $F=\R$, i.e. to show that 
the vertical leaf \,$\{p\}\times \R$\, in  \,$B\times_f\R$\,
has extrinsic curvature 
$$\le  | \nabla_p(-f) |/f(p).$$ \end{claim}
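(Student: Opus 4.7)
The plan is to apply fiber-independence to reduce from an arbitrary fiber $F$ to $F=\R$: I will check that both the intrinsic distance $\rho$ on the vertical leaf and the extrinsic distance $s$ in the warped product coincide in the two settings, so that the asymptotic comparison in Definition \ref{def:ext-curv} transfers verbatim.

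First I would fix nearby points $(p,\varphi),(p,\psi)\in\{p\}\times F$ with $s:=|(p,\varphi)(p,\psi)|_{B\times_f F}<\varpi^\kappa$, and set $\ell=|\varphi\psi|_F$. By Lemma \ref{lem:fibergeo}, $\varphi$ and $\psi$ are joined in $F$ by a unique unit-speed pregeodesic $\beta:[0,\ell]\to F$. Taking $F^*=\R$ with corresponding points $\varphi^*=0$, $\psi^*=\ell$, Proposition \ref{prop:wp-proj}(\ref{vert-leaf-proj}) gives the intrinsic distance on either vertical leaf as $\rho=f(p)\cdot\ell$, while Proposition \ref{prop:fiber-independence}(\ref{dist-fiber-indep}) gives equality of the extrinsic distances,
\[
s=|(p,\varphi)(p,\psi)|_{B\times_f F}=|(p,0)(p,\ell)|_{B\times_f\R}.
\]
Because the candidate bound $A=|\nabla_p(-f)|/f(p)$ depends only on $B$ and $f$ (not on the choice of fiber), the inequality $\rho-s\le(A^2/24)\cdot s^3+o(s^3)$ is literally the same condition in the two settings. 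Hence the extrinsic-curvature bound for $\{p\}\times\R$ in $B\times_f\R$ implies the bound for $\{p\}\times F$ in $B\times_f F$.

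The only technical point requiring attention is that the $\CAT^\kappa$ hypothesis must remain available after fiber replacement, so that the reduced problem is meaningful. I would handle this via Lemma \ref{lem:F=interval}: part (\ref{F=int:2-parameter}) says that for any unit-speed geodesic $\beta:J\to F$ the embedding $\id\times\beta:B\times_f J\to B\times_f F$ is isometric, and part (\ref{F=int}) yields $B\times_f J\in\CAT^\kappa$. Since Definition \ref{def:ext-curv} is an asymptotic condition in $s$, a closed interval $J$ taken long enough to contain the relevant segment is interchangeable with $\R$ for verifying the bound, and no separate assertion $B\times_f\R\in\CAT^\kappa$ is strictly needed. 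This interval-versus-line matter is the main (and really only) obstacle, and it is handled cleanly by the isometric embedding of Lemma \ref{lem:F=interval}(\ref{F=int:2-parameter}).
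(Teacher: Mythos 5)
Your proposal is correct and follows essentially the same route as the paper: both proofs invoke Lemma~\ref{lem:fibergeo} for geodesics in $F$, use the fiber-independence of distances (Proposition~\ref{prop:fiber-independence}) to show vertical leaves are umbilic so that the extrinsic curvature condition only needs to be checked along single geodesics, and then use the isometric embedding of Lemma~\ref{lem:F=interval}\,(\ref{F=int:2-parameter}) to reduce to an interval fiber. Your additional remark that $B\times_f\R$ itself need not lie in $\CAT^\kappa$, and that the asymptotic nature of Definition~\ref{def:ext-curv} lets one work on a sufficiently long closed interval $J$ instead, is a worthwhile clarification that the paper leaves implicit.
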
 
By Lemma \ref{lem:fibergeo},    $F$ is a locally geodesic space.
By Proposition \ref{prop:fiber-independence}\,(\ref{dist-fiber-indep}),  
vertical leaves are umbilic, 
i.e. if two points of  $\{p\} \times F$ have the same intrinsic distance in $\{p\} \times F$, then they have the same extrinsic distance in $B\times_fF$. It follows that we need only verify 
the extrinsic curvature definition (\ref{eq:ext}) for endpoint pairs lying on a single geodesic in $\{p\}\times F$.

Let $\beta: J\to F$ be a  unit-speed geodesic. By Lemma \ref{lem:F=interval}\,(\ref{F=int:2-parameter}),  under the embedding
$$\id\times\beta: B\times_fJ\to  B \times_fF,$$  the intrinsic and extrinsic metrics of $B\times_fJ$ agree. 
The claim follows.

\begin{claim}\label{fibersymmetry}
Let 
\,$\gamma=(\gamma_B,\gamma_\R):[-s_0,s_0]\to B\times_f\R $\,   be a
 geodesic  with endpoints $(p,\pm\, \psi_0)$.  Then
\,$\gamma_B(s)=\gamma_B(-s)$, and the speed $v_\R$ of $\gamma_\R$ satisfies  $v_\R(s)=v_\R(-s)$ for $s\in [0,s_0]$. 
(The lemma concerns limits as $\psi_0\to 0$, and in its proof we will be taking $\psi_0$ sufficiently small.)
\end{claim}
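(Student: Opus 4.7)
The plan is to invoke Corollary \ref{cor:vertical}\,(\ref{symmetry}), which furnishes a geodesic $\check\gamma$ with the same endpoints as $\gamma$ and satisfying $\check\gamma_B(-s)=\check\gamma_B(s)$, and then to use the $\CAT^\kappa$ uniqueness of geodesics between sufficiently close points to identify $\check\gamma$ with $\gamma$.  Once $\gamma_B$ is symmetric, the symmetry of $v_\R$ drops out of Clairaut's theorem.

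First I would verify that $\psi_0$ can be taken small enough for the hypotheses of both Corollary \ref{cor:vertical} and $\CAT^\kappa$-uniqueness to apply.  The vertical curve $s\mapsto (p,s)$, $s\in[-\psi_0,\psi_0]$, lies in $\{p\}\times\R$ and, by Proposition \ref{prop:wp-proj}\,(\ref{vert-leaf-proj}), has length $2f(p)\psi_0$ in $B\times_f\R$.  Hence
\[
|(p,\psi_0)\,(p,-\psi_0)|_{B\times_f\R}\,\le\,2\,f(p)\,\psi_0,
\]
which is $<\varpi^\kappa$ for $\psi_0$ small.  Uniqueness of geodesics in $\CAT^\kappa$ between points at distance $<\varpi^\kappa$ then makes $\gamma$ the only geodesic joining its endpoints.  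Moreover, since $f(p)>0$ and $\gamma_B$ is continuous, shrinking $\psi_0$ further keeps $f\circ\gamma_B>0$ on all of $[-s_0,s_0]$, so Corollary \ref{cor:vertical}\,(\ref{symmetry}) applies and produces the symmetric $\check\gamma$.  By uniqueness $\check\gamma=\gamma$, yielding $\gamma_B(s)=\gamma_B(-s)$.

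The speed identity now follows from Clairaut's equation (\ref{eq:clairaut}).  Since $f\circ\gamma_B>0$, the continuous extension $\overline v_\R$ coincides with $v_\R$, and with $c=c(\gamma)$,
\[
v_\R(s)\,=\,\frac{c}{(f\circ\gamma_B)(s)^2}\,=\,\frac{c}{(f\circ\gamma_B)(-s)^2}\,=\,v_\R(-s).
\]

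The only delicate point is the calibration of $\psi_0$, which is routine once $f(p)>0$ is available; with that in hand the reflection symmetry of the fiber does all the structural work, and uniqueness in $\CAT^\kappa$ converts the existence of a symmetric geodesic into symmetry of the given one.
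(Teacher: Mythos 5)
Your proof is correct and takes essentially the same approach as the paper, which cites Corollary~\ref{cor:vertical}\,(\ref{F-midpt}) together with uniqueness of $\gamma$; you instead invoke Corollary~\ref{cor:vertical}\,(\ref{symmetry}) together with uniqueness, but both versions implement the same reflection-and-uniqueness argument identifying $\gamma$ with a symmetric geodesic having the same endpoints. Your explicit calibration of $\psi_0$ and your derivation of the $v_\R$-symmetry from Clairaut's equation~(\ref{eq:clairaut}) simply spell out steps that the paper leaves implicit.
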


The claim follows from Corollary \ref{cor:vertical}\,(\ref{F-midpt})  and  uniqueness of  $\gamma$.

\begin{claim}\label{<conelength}
The geodesic of $B\times_f\R$  joining 
$(p,-\psi_0)$ and $(p,\psi_0)$, $\psi_0>0$, 
does not lie in the vertical leaf $\{p\}\times \R$.  
\end{claim}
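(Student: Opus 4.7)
The plan is a direct comparison: we will exhibit an admissible curve $\alpha$ from $(p,-\psi_0)$ to $(p,\psi_0)$ whose length is strictly less than $2\psi_0 f(p)$ for all sufficiently small $\psi_0>0$. By Proposition \ref{prop:wp-proj}(\ref{vert-leaf-proj}) the vertical leaf $\{p\}\times\R$ is isometric to $f(p)\cdot\R$, so any curve joining $(p,-\psi_0)$ and $(p,\psi_0)$ within that leaf has length at least $2\psi_0 f(p)$. Hence once such a shorter competitor is exhibited, no geodesic of $B\times_f\R$ between those endpoints can lie in $\{p\}\times\R$.

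To construct $\alpha$ we use the downward gradient flow of $f$. By Theorem \ref{thm:A:base-f}, $B\in\CAT^\kappa$ and $f\in\conv^\kappa$, so $-f$ is semiconcave and Theorem \ref{thm:gradient} provides a unique maximal gradient curve of $-f$ starting at $p$. Let $\sigma:[0,T]\to B$ denote its unit-speed reparametrization and write $g:=|\nabla_p(-f)|>0$; then $(f\circ\sigma)^+(0)=-g$, so $f(\sigma(s))=f(p)-gs+o(s)$. For $\psi_0$ small and a constant $c>0$ to be chosen, set
$$
\alpha(t)\,=\,\bigl(\,\sigma(c(\psi_0^{\,2}-t^{\,2})),\,t\,\bigr),\qquad t\in[-\psi_0,\psi_0].
$$
Then $\alpha$ is admissible with endpoints $(p,\pm\psi_0)$, and its base speed is $v_B(t)=2c|t|$.

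The next step is to expand the length formula (\ref{eq:length}). Using $(f\circ\alpha_B)(t)^{\,2}=f(p)^{\,2}-2f(p)gc(\psi_0^{\,2}-t^{\,2})+o(\psi_0^{\,2})$ together with $\sqrt{1+x}=1+x/2+O(x^{\,2})$, one obtains
\begin{align*}
\length\alpha\,&=\,\int_{-\psi_0}^{\psi_0}\sqrt{4c^{\,2}t^{\,2}+(f\circ\alpha_B)(t)^{\,2}}\,dt\\
&=\,2\psi_0 f(p)+\tfrac{4}{3}\psi_0^{\,3}\!\left(\tfrac{c^{\,2}}{f(p)}-gc\right)+o(\psi_0^{\,3}).
\end{align*}
The bracketed coefficient is minimized at $c=\tfrac12 gf(p)$, yielding $-\tfrac14 g^{\,2}f(p)$; thus
$$
\length\alpha\,=\,2\psi_0 f(p)-\tfrac{1}{3}g^{\,2}f(p)\,\psi_0^{\,3}+o(\psi_0^{\,3})\,<\,2\psi_0 f(p)
$$
for $\psi_0$ sufficiently small, as required.

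The main obstacle is the rigorous control of the $o(\psi_0^{\,3})$ error in the length expansion. We would strengthen the pointwise approximation $f(\sigma(s))=f(p)-gs+o(s)$ to a uniform estimate $|f(\sigma(s))-(f(p)-gs)|\le\varepsilon s$ for $s\in[0,c\psi_0^{\,2}]$ with $\varepsilon$ arbitrarily small; combined with the uniform smallness of the argument of the square root, this lets the Taylor remainder be integrated to $o(\psi_0^{\,3})$. With that uniformity in hand, the remaining computation is elementary, and the resulting contradiction with the vertical-leaf length establishes the claim.
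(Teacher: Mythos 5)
Your argument is correct and reaches the claim by a genuinely different route from the paper. The paper's competitor is built from the explicit geometry of a cone: it takes a chord $\gamma$ of $\cone_a = \R_{\ge 0}\times_{a\cdot\id}\R$ with $a$ slightly below $|\nabla_p(-f)|$, chooses a geodesic direction in $B$ along which $f$ drops strictly faster than at rate $a$, and transplants the chord's radial profile onto that geodesic; the two comparisons $\length\check\gamma < \length\gamma < 2 f(p)\psi_0$ are then immediate and require no Taylor expansion, since the Euclidean cone computes the deficit exactly. You instead push a parabolic profile $s=c(\psi_0^2-t^2)$ along the unit-speed downward gradient curve of $f$ and expand the length integral directly, optimizing over $c$. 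The two constructions agree to second order --- the cone chord's radial profile is, to leading order, precisely your parabola with $c=\tfrac12 g f(p)$ --- but your route trades the cone's built-in exactness for an explicit cubic deficit plus the bookkeeping of the $o(\psi_0^3)$ remainder. The uniformity concern you raise is not an actual gap: $f(\sigma(s))=f(p)-gs+o(s)$ already means that for any $\varepsilon>0$ there is $\delta>0$ with $|f(\sigma(s))-f(p)+gs|\le\varepsilon s$ on $(0,\delta)$, and once $c\psi_0^2<\delta$ this holds uniformly over the whole integration range, so the remainder integrates to at most $\varepsilon\cdot O(\psi_0^3)$, hence $o(\psi_0^3)$. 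Two small points worth spelling out: the unit-speed reparametrization $\sigma$ exists on an initial interval because $|\nabla_{\cdot}(-f)|$ is lower semicontinuous and positive at $p$; and the resulting $\alpha_B$ is Lipschitz (with $|\alpha_B'(t)|=2c|t|\le 2c\psi_0$ and $f\circ\alpha_B>0$ throughout for small $\psi_0$), so $\alpha$ is admissible in the sense of Definition \ref{def:wp}.
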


We are going to use the data from a cone geodesic $\gamma$ to construct a shorter curve  in $B\times_f \R$, 
specifically a curve whose projection to $B$ runs back and forth along a geodesic pointing in a direction of decreasing $f$.

Suppose $0<a<| \nabla_p(-f) |$, and set $r=f( p )/a$.
Let 
$$\gamma=(\gamma_{[0,r]}\,,\gamma_\R):[-s_0,s_0]\to {\cone}_a $$ 
be a unit-speed geodesic with endpoints 
$(r,\pm\psi_0)$. 
Here we write $\gamma_{[0,r]}$ to emphasize that  the projection of the cone geodesic $\gamma$ to the 
base $\R_{\ge 0}$ of ${\cone}_a $ lies in $[0,r]$.

 Since the sector of $\cone_a$ with angle at the vertex $2\psi_0$ is isometric to a sector of the
Euclidean plane of the same angle, 
if $\psi_0<\pi$ then $\gamma$ may be viewed simply as a Euclidean segment
connecting two points of a central circular arc. Hence its projection to the base behaves as described in claim 2 and has speed $0$ only at $s=0$.

We may choose a unit-speed geodesic $\alpha:[0, t_0)\to B$ with $\alpha(0)=p$ and such that $$(f\circ\alpha)(t)\,<\,f( p ) - a\cdot t.$$

For some $\epsilon >0$, if $0<\psi_0<\epsilon$
then
  $r - \gamma_{[0,r]}(0)< t_0$.  Then we may define a curve $\check\gamma : [-s_0,s_0]\to B\times_f \R$ by 
$$\check\gamma(s)=\bigl( \alpha\bigl(r - \gamma_{[0,r]}(s)\bigr),\ \gamma_\R(s)\bigr).$$
By the length formula (\ref{eq:length}), 
$$\length \check\gamma \,<\, \length \gamma\,<\,	
 2\cdot f( p)\cdot \psi_0.$$
Since \,
$2\cdot f( p)\cdot \psi_0$\, 
 is the length of the geodesic  joining 
 $(p,-\psi_0)$ and $(p,\psi_0)$in the vertical leaf $\{p\}\times \R$, the claim follows.

\begin{claim}\label{convex-on-traj}
Let \,$\gamma=(\gamma_B,\gamma_\R):[-s_0,s_0]\to B\times_f\R$\, be a unit-speed geodesic.
Then:
\begin{enumerate}
\item[(i)]
The arclength parameter  $t\in [-t_0,t_0]$
 of $\gamma_B$ is a strictly increasing function $t=t(s)$ of  
the arc-length parameter $s\in [-s_0,s_0]$  of $\gamma$.
\item[(ii)]
$(f\circ\gamma_B)(s(t))$ is a convex function of the arc length parameter $t$ of $\gamma_B$.
\end{enumerate}
\end{claim}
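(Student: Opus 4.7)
The plan relies on Clairaut's theorem (Theorem \ref{thm:clairaut}) throughout. For part (i), Clairaut yields $v_B$ continuous with $v_B=\sqrt{a^2-(c(\gamma)/(f\circ\gamma_B))^2}$ whenever $f\circ\gamma_B>0$, so $t(s)=\int_0^s v_B$ is non-decreasing. Strict monotonicity amounts to showing $v_B$ does not vanish on any nondegenerate subinterval; by Corollary \ref{cor:vertical}(\ref{speed-0}) such a plateau $[s_1,s_2]$ would force $\gamma_B\equiv p_\ast$ there with $f(p_\ast)=c(\gamma)/a$. I plan to handle this in two subcases: (a) if $p_\ast$ is not a global minimum of $f$ on $B$, use a detour argument analogous to the preceding claim — step from $(p_\ast,\gamma_F(s_1))$ in a $B$-direction of $f$-decrease, traverse the fiber at the lower value of $f$, and return — yielding a strictly shorter competitor and contradicting that $\gamma$ is a geodesic; (b) if $p_\ast$ is a global minimum, then by Proposition \ref{prop:wp-proj}(\ref{f-min}) the vertical leaf $\{p_\ast\}\times\R$ is isometrically embedded, and uniqueness of geodesics in $\CAT^\kappa$ forces $\gamma$ to lie entirely within it, making $\gamma_B$ constant (so $t_0=0$ and the claim is vacuous). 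The degenerate case $c(\gamma)=0$ forces $\overline{v}_F\equiv 0$, reducing $\gamma$ to a reparametrization of $\gamma_B$; when $f\circ\gamma_B$ vanishes somewhere the two-piece property (Proposition \ref{prop:f=0}(\ref{f=0-base-projection})) says $\gamma_B$ is a concatenation of two $B$-geodesics, and the claim follows directly.

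For part (ii), reparametrize $\gamma_B$ by arclength $t$, giving a unit-speed Lipschitz curve $\tilde\gamma_B$ in $B$, and set $g(t)=f(\tilde\gamma_B(t))$. By Theorem \ref{thm:A:base-f}, $f\in\conv^\kappa$; since $\kappa\le 0$ by the standing hypothesis of Lemma \ref{lem:fiberbound}, $f$ is in fact convex (in the usual sense) along every $B$-geodesic. My strategy is a three-point comparison: for close $t_-<t_0<t_+$, let $\beta$ be the $B$-geodesic joining $\tilde\gamma_B(t_\pm)$ and let $Q'$ be the point on $\beta$ at parameter fraction $\lambda=(t_0-t_-)/(t_+-t_-)$. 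Convexity of $f$ along $\beta$ yields $f(Q')\le(1-\lambda)g(t_-)+\lambda g(t_+)$, and it remains to pass from $f(Q')$ to $g(t_0)=f(\tilde\gamma_B(t_0))$ by controlling $|Q'\,\tilde\gamma_B(t_0)|_B$ as $t_+-t_-\to 0$.

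The hard part will be this last control, which requires $\tilde\gamma_B$ to be nearly a $B$-geodesic to second order. The Riemannian identity
\[
\frac{d^2 g}{dt^2}=\mathrm{Hess}(f)(T,T)+\frac{c^2}{f^3}\cdot\frac{|\nabla f|^2-(df/dt)^2}{v_B^2},
\]
with both terms non-negative (the first by convexity of $f$ on $B$, the second by Cauchy--Schwarz after substituting the geodesic equation), would give the result immediately; translating this to the metric setting is the core technical difficulty. I expect the cleanest route is a comparison with the cone $\cone_a$ of the preceding claim: in $\cone_a$ the analogue of $\tilde\gamma_B$ is the base projection of a Euclidean segment, along which the radial coordinate $r=f/a$ is convex by elementary Euclidean geometry, and one then transfers convexity to $g$ via the $\CAT^\kappa$ comparison applied to appropriately chosen quadruples. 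An alternative uses semiconvexity of $f$ on $B$ together with Theorem \ref{thm:gradient} to give direct control of the second-order difference quotients $(g(t+h)-2g(t)+g(t-h))/h^2$, converting the Riemannian identity into an inequality on generalized second derivatives.
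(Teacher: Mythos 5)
Your case (b) in part (i) is wrong. The isometric embedding of the vertical leaf $\{p_\ast\}\times\R$ together with uniqueness of geodesics in $\CAT^\kappa$ only forces the plateau segment $\gamma|[s_1,s_2]$ to lie in that leaf, not the whole of $\gamma$; the endpoints of $\gamma$ (which in the lemma's setting are $(p,\pm\psi_0)$ with $p\ne p_\ast$, since $|\nabla_p(-f)|\ne 0$) need not lie in $\{p_\ast\}\times\R$. For a concrete counterexample to your conclusion, take $B=[1,\infty)$, $f=\id$: for $\psi_0>\pi/3$ the geodesic of $[1,\infty)\times_{\id}\R$ between $(2,-\psi_0)$ and $(2,\psi_0)$ goes tangent to the circle $\{1\}\times\R$, runs along it, and returns, so $\gamma_B$ has a genuine nontrivial plateau at the global minimum of $f$ without being constant and the claim is not vacuous. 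What this actually shows is that (i), as stated for a general geodesic, needs $\psi_0$ small enough that the plateau point avoids the zero set of $\nabla(-f)$ -- which is the regime of the lemma. The paper deduces (i) from Claim \ref{<conelength}: a plateau puts a geodesic subsegment inside a vertical leaf over a point where $\nabla(-f)\ne 0$, and the dip construction of that claim, applied at the plateau point, produces a strict shortcut.

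For part (ii) your sketch is not merely unfinished; the proposed route has a structural gap. The three-point comparison needs $|Q'\,\tilde\gamma_B(t_0)|=o\bigl((t_+-t_-)^2\bigr)$, but $\tilde\gamma_B$ is not a geodesic of $B$, so this distance is generically of the very same second order as the convexity defect you are trying to bound; none of your suggested routes (Riemannian Hessian identity, transfer from $\cone_a$, second-order difference quotients) supplies that control. The paper's argument is both elementary and complete, and you should have found it after noting that $\kappa\le 0$ makes $f$ honestly convex along $B$-geodesics: if $g(t)=(f\circ\gamma_B)(s(t))$ is not convex, pick a subinterval $I$ on which $g$ lies above its chord, strictly somewhere. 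Then $\gamma_B|I$ cannot be a geodesic of $B$ (else convexity of $f$ would put $g$ under the chord). Replace $\gamma_B|I$ by the constant-speed $B$-geodesic $\alpha$ joining its endpoints, parametrized by $I$, and keep $\gamma_\R$ unchanged on $s(I)$: then $\alpha$ has strictly smaller speed, and $f\circ\alpha$ is convex and lies under the chord, hence $f\circ\alpha\le g$ pointwise. By the length formula (\ref{eq:length}) the modified admissible curve is strictly shorter than $\gamma|s(I)$, contradicting that $\gamma$ is a geodesic. No cone comparison or Gauss-equation machinery is needed for this step.
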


(i) follows from claim \ref{<conelength}.

If $(f\circ\gamma_B)(s(t))$ is not convex, then its restriction to some subinterval $I$  of $[-t_0,t_0]$ is \,$\ge$\, the linear function of \,$t$\, with the same endpoint values. Moreover, $\gamma_B|\, I$\, is not a geodesic since $f$ is convex.   Let $\alpha$ be the geodesic of $B$ joining the endpoints of  $\gamma_B|\, I$\, and parametrized by $I$.  Since $\alpha$ is shorter than $\gamma_B| I$, and $f\circ\alpha$ is convex, then the length formula shows that $\gamma|\,I$\, can be shortened in \,$B\times_f\R$.  This contradiction proves (ii).

\begin{claim}
Let $\gamma=(\gamma_B,\gamma_\R):[-s_0,s_0]\to B\times_f\R$ be a unit-speed geodesic with endpoints $(p,\pm\, \psi_0)$. Then there exists $0<A\le | \nabla_p(-f) |/f(p)$ such that
\begin{equation}\label{eq:fiberbound}
2\cdot f( p )\cdot\psi_0 - 2\cdot s_0\ \le\  (A^2/24)\cdot(2\cdot s_0)^3 + o(s_0^3).
\end{equation}
\end{claim}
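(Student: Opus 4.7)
The plan is to produce a lower bound of the form $s_0\ge (f(p)/a)\sin(a\psi_0)$ for a suitable $a > 0$ with $a\le |\nabla_p(-f)|$, from which \eqref{eq:fiberbound} follows by Taylor expansion of $\sin$.  The lower bound will come from constructing a comparison curve in the model cone $\cone_a = \R_{\ge 0}\times_{a\cdot\id}\R$ of \eqref{eq:cone-def}, using the explicit length formula for warped products and the pointwise convexity data already established.

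Parametrize $\gamma$ by the (signed) arc-length $t\in[-t_0,t_0]$ of $\gamma_B$ (monotone in $s$ by Claim 4(i)), and set $g(t)=f(\gamma_B(s(t)))$.  By Claim 2 (symmetry) $g$ is even; by Claim 4(ii) it is convex in $t$, with minimum $g(0)=b$ and $g(\pm t_0)=f(p)$.  Define $a:=g'(t_0^-)$; convexity and evenness then give $|g'(t)|\le a$ on $(-t_0,t_0)$, while Claim 3 forces $b<f(p)$, so $a>0$.  To bound $a$ from above, for $t$ close to $t_0^-$ I compare $\gamma_B(t)$ and $p$ along a geodesic of $B$:  by the gradient characterization in Theorem~\ref{thm:gradient}(ii),
$$f(p) - f(\gamma_B(t)) \,\le\, |\nabla_p(-f)|\cdot|\gamma_B(t)\,p|_B + o(|\gamma_B(t)\,p|_B).$$
Since arc length along $\gamma_B$ dominates distance in $B$, we have $|\gamma_B(t)\,p|_B\le t_0 - t$; dividing by $t_0 - t$ and passing to the limit yields $a\le |\nabla_p(-f)|$.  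Set $A:=a/f(p)\in(0,\,|\nabla_p(-f)|/f(p)]$.

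The heart of the argument is the comparison curve $\hat\gamma\colon[-s_0,s_0]\to\cone_a$ defined by $\hat\gamma(s) = \bigl(g(t(s))/a,\,\gamma_\R(s)\bigr)$, whose endpoints are $(f(p)/a,\pm\psi_0)$.  By the warped-product length formula \eqref{eq:length} inside $\cone_a$, and using $|g'|\le a$,
$$\length_{\cone_a}\hat\gamma \,=\, \int_{-s_0}^{s_0}\sqrt{\tfrac{g'(t)^2 v_B^2}{a^2} + g^2\,v_\R^2}\,ds\,\le\, \int_{-s_0}^{s_0}\sqrt{v_B^2 + g^2\,v_\R^2}\,ds \,=\, 2s_0.$$
For $\psi_0$ small enough that $a\psi_0<\pi/2$, the geodesic in $\cone_a$ between $(f(p)/a,\pm\psi_0)$ unrolls to a Euclidean chord of length $(2f(p)/a)\sin(a\psi_0)$, so $s_0 \ge (f(p)/a)\sin(a\psi_0)$.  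Taylor expansion then gives
$$\rho - s \,\le\, \tfrac{2f(p)}{a}\bigl(a\psi_0 - \sin(a\psi_0)\bigr) \,=\, \tfrac{f(p)a^2}{3}\psi_0^3 + O(\psi_0^5);$$
combined with $s_0 = f(p)\psi_0 + O(\psi_0^3)$ (from $s_0\le f(p)\psi_0$ and the lower bound just derived), this equals $(A^2/24)(2s_0)^3 + o(s_0^3)$, as required.

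The main obstacle I anticipate is the bound $a\le |\nabla_p(-f)|$.  Since $\gamma_B$ is only a Lipschitz curve in $B$ (not necessarily a geodesic), one cannot apply Theorem~\ref{thm:gradient}(ii) directly with $\gamma_B$ as the test curve, and $\gamma_B$ need not have a well-defined tangent at $p$.  The proposal sidesteps this by comparing $\gamma_B(t)$ with $p$ through genuine geodesics of $B$ and using the crude Lipschitz bound $|\gamma_B(t)\,p|_B\le t_0 - t$; once this is in place, the cone-comparison step is purely computational, resting on the length formula for warped products and the pointwise inequality $|g'|\le a$.
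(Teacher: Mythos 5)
Your proof is correct and follows the paper's overall route: set $g = f\circ\gamma_B$ in the arc-length parameter $t$ of $\gamma_B$, take $a = g'(t_0^-)$, bound $a\le|\nabla_p(-f)|$, compare $\gamma$ with a curve in $\cone_a$, and read off the estimate with $A = a/f(p)$. But two of your implementation choices differ from the paper's in a way worth noting. First, your comparison curve $\hat\gamma$ has base projection $g(t(s))/a$, so the cone's warping function along $\hat\gamma$ is \emph{exactly} $g$; the length comparison then comes down to the single pointwise bound $|g'|\le a$. The paper instead builds $\check\gamma$ by matching the base and fiber speeds $v_B$, $v_\R$ of $\gamma$, which shifts the needed inequality into the warping function: $a\cdot\check\gamma_{[0,r]}(s)=f(p)-a(t_0-|t(s)|)\le g(t(s))$, the tangent-line estimate from convexity (the paper's displayed condition $a(r-t)\le g(t)$ appears to have a typo; it should be $a\bigl(r-(t_0-|t|)\bigr)\le g(t)$). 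Second, you compute the cone distance directly as a Euclidean chord $2(f(p)/a)\sin(a\psi_0)$ rather than invoking the extrinsic-curvature formula for the vertical circle in $\cone_a$; equivalent, but more self-contained. Your worry about the bound $a\le|\nabla_p(-f)|$ is well placed — $\gamma_B$ is only Lipschitz, so one cannot apply the gradient estimate along $\gamma_B$ itself — and your fix (apply the gradient/semiconcavity inequality along genuine geodesics of $B$ from $p$, then use $|\gamma_B(t)\,p|_B\le t_0-t$) supplies a step the paper leaves implicit with only the words ``appealing to claim 4.'' One small point shared with the paper: $a$ depends on $\psi_0$ through $\gamma$, so to have a single $A$ working for all small $\psi_0$ one should in the end take $A=|\nabla_p(-f)|/f(p)$, which the claim permits and which is all Lemma~\ref{lem:fiberbound} needs.
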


In contrast to the proof of claim \ref{<conelength}, here we  use the data from a geodesic in $B\times_f \R$ to construct a shorter curve in a cone. 

As before, let $t\in [-t_0,t_0]$ be the arc-length parameter of $\gamma_B$. By Corollary \ref{cor:vertical}\,(\ref{min}) and (\ref{speed-0}), $v_B$ vanishes only when  $f\circ\gamma_B$
 takes its minimum value, and in particular at $s=0$. Moreover the minimum value occurs only at $s=0$. 
Otherwise, since $(f\circ\gamma_B)(s(t))$ is convex by claim \ref{convex-on-traj},  then $(f\circ\gamma_B)(s(t))$ would take its minimum on a nontrivial interval $I$.  By claim \ref{fibersymmetry}, $I$ would be symmetric about $0$.  Then  $\gamma_B|\,I$ would be constant, since otherwise $\gamma$ could be shortened by replacing $\gamma_B|\,I$\, with a constant curve. But $\gamma_B|\,I$ cannot be constant by claim \ref{<conelength}.  

For a given  $a >0$, we may reduce \,$\psi_0$\, if necessary so that $t_0<r=f( p )/a$.  Define a curve
$\check\gamma:[-s_0,s_0]\to\cone_a$\, with endpoints $(r,\pm\,\psi_0)$, by requiring the projections $\check\gamma_{[0,r]}$ and $\check\gamma_{\R}$ of $\check\gamma$ on base and fiber to have speeds $v_B$ and $v_\R$ respectively. By the length formula (\ref{eq:length}), if
$$f( p )-a\cdot t \,=\, a\cdot  (r-t)\le (f\circ\gamma_B)(s(t)),\ \ 0\,\le\, t\le t_0,$$
then $\length\check \gamma \le\length \gamma$.  In particular, appealing to claim \ref{convex-on-traj}, let us  take
$$a \,=  \, \bigl |\frac{d\,(f\circ\gamma_B)(s(t))}{dt}((-t_0)^+)\bigr|\,\le\,| \nabla_p(-f) |.$$ 

Now we compare the respective curvatures of vertical leaves in $B\times_f\R$ and $\cone_a$. 
Since \,$2\cdot f( p)\cdot \psi_0$\, is the distance in the vertical leaf of $B\times_f \R$ between  $(p,-\psi_0)$ and $(p,\psi_0)$, the curvature formula (\ref{eq:ext}) gives
\begin{equation}\label{eq:>-cone-curv}
2\cdot s_0 = \length \gamma \ge \length \check\gamma \ge 2\cdot f(p)\cdot \psi_0 -
(A^2/24)\cdot (2\cdot f(p)\cdot \psi_0)^3 + o(\psi_0^3),
\end{equation}
where $A=a/f( p )=1/r$ is the curvature of the fiber
$\{t_0\} \times\R$\, in $\cone_a$. 

In the limit as $\psi_0\to 0$, the inequality for
\,$(2\cdot f(p)\cdot \psi_0 - 2\cdot s_0)$\, given by (\ref{eq:>-cone-curv}) yields  (\ref{eq:fiberbound}), proving the claim.

Lemma \ref{lem:fiberbound} follows from  (\ref{eq:fiberbound}), claim \ref{R-vertical-curv} and  Definition \ref{def:ext-curv}. 
\end{proof}

\begin{lem}\label{lem:cat-grad-foot}
Suppose $B\in\CAT^\kappa$, and  $f:B \to \R_{\ge 0}$ satisfies $f\in\conv^\kappa$.  Set $Z=f^{-1}(0) $ and suppose 
$\emptyset\ne Z\subsetneq B$. Then:  \begin{align}
\label{eq:cat-grad-foot}
\inf\,&\{(f\circ\alpha)^+(0)^2 : \alpha =\!\dist_Z \textrm{-realizer with footpoint } \alpha(0)\in Z,\, |\alpha^+(0)|=1\}\\
&= \liminf_{\epsilon \to 0}\ \{|\nabla_p(-f)|^2 : 
0<\dist_Z(p) \le \epsilon\}.\notag\end{align}
\end{lem}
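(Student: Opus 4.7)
\medskip

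\textbf{Plan.} Write $L$ for the left-hand side and $R$ for the right-hand side, so that $L=\inf_\alpha (f\!\circ\!\alpha)^+(0)^2$ with $\alpha$ ranging over unit-speed $\dist_Z$-realizers whose footpoint lies in $Z$, and $R=\lim_{\epsilon\to 0^+}\inf\{|\nabla_p(-f)|^2:0<\dist_Z(p)\le\epsilon\}$. I will prove the two inequalities $L\le R$ and $L\ge R$ separately; the first is a direct convexity computation, the second requires a variational/blow-up argument at the footpoint that I expect to be the main obstacle.

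\medskip

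\textbf{Step 1 (easy direction, $L\le R$).} For each $p$ with $0<\dist_Z(p)\le\epsilon$ admitting a footpoint $q_p\in Z$ — footpoints exist for small $\epsilon$ since by Remark \ref{rem:cone}(a) the set $Z$ is $\varpi^\kappa$-convex and $B\in\CAT^\kappa$ — let $\alpha_p\colon[0,\dist_Z(p)]\to B$ be the unit-speed geodesic from $q_p$ to $p$. This $\alpha_p$ is a $\dist_Z$-realizer with footpoint in $Z$, so it is admissible for the infimum defining $L$. Applied to the convex function $g=f\!\circ\!\alpha_p$ with $g(0)=0$, standard one-sided derivative bounds give
\[
(f\!\circ\!\alpha_p)^+(0)\;\le\;\frac{f(p)}{\dist_Z(p)}\;\le\;(f\!\circ\!\alpha_p)^-(\dist_Z(p)).
\]
The right-most quantity equals $d_p(-f)(-\alpha_p^+(\dist_Z(p)))\le|\nabla_p(-f)|$, so $(f\!\circ\!\alpha_p)^+(0)\le|\nabla_p(-f)|$. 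Squaring, taking infimum over such $p$, and letting $\epsilon\to 0$ yields $L\le R$.

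\medskip

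\textbf{Step 2 (hard direction, $L\ge R$).} I will show that for every $\delta>0$ there is a point $p_\delta$ with $\dist_Z(p_\delta)$ arbitrarily small and $|\nabla_{p_\delta}(-f)|^2\le L+\delta$, which gives $R\le L$. Fix an admissible realizer $\alpha$ with $(f\!\circ\!\alpha)^+(0)^2\le L+\delta/2$; set $q=\alpha(0)\in Z$ and $u=\alpha^+(0)\in\Sigma_qB$. Because $\alpha^+(0)$ is a (near-)minimizer of $d_qf$ among unit tangent vectors pointing away from $Z$, I plan to argue that along the ray $p_t=\alpha(t)$ the downward gradient $\nabla_{p_t}(-f)$ is asymptotically aligned with $-\alpha^+(t)$, and hence that $|\nabla_{p_t}(-f)|^2\to d_qf(u)^2=(f\!\circ\!\alpha)^+(0)^2$ as $t\to 0$. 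Choosing $t=t(\delta)$ small enough then produces the required $p_\delta$. The precise implementation is to blow up at $q$: by a $\CAT$ analogue of Lemma \ref{lem:blowup}, the rescaled spaces $(\lambda B,q)$ converge to $T_qB$ and the rescaled functions $\lambda(f\!\circ\!i_\lambda)$ to the convex, $1$-homogeneous function $d_qf$ on $T_qB$; gradient vectors of semiconcave functions pass to the limit, so it suffices to verify the alignment inside the cone. In the cone model, $d_qf$ is convex and $1$-homogeneous, so for any $u$ attaining $d_qf(u)=c$ on unit vectors, the downward gradient of $-d_qf$ along the ray $\{tu:t>0\}$ has magnitude exactly $c$ whenever $u$ minimizes $d_qf$ on the unit sphere of the ambient (sub)cone. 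This explicit cone computation, followed by a Gromov-Hausdorff transfer back to $B$, gives $|\nabla_{p_{t(\delta)}}(-f)|^2\le (f\!\circ\!\alpha)^+(0)^2+\delta/2\le L+\delta$.

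\medskip

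\textbf{Main obstacle.} The hard work is in Step 2. The natural inequality $|\nabla_{\alpha(t)}(-f)|\ge(f\!\circ\!\alpha)^+(0)$ (already used in Step 1) points the wrong way, and in simple examples such as $f(x,y)=x^2+y$ on $[0,1]^2$ one sees that along a non-minimizing realizer the limit $\lim_{t\to 0}|\nabla_{\alpha(t)}(-f)|$ strictly exceeds $(f\!\circ\!\alpha)^+(0)$. Thus the equality cannot be proved realizer-by-realizer; it is essential to restrict attention to (approximately) minimizing realizers and to extract from their extremality the alignment of $-\alpha^+(t)$ with the downward gradient direction. Making this rigorous needs a careful tangent-cone/blow-up argument at the footpoint $q$, including convergence of downward gradient vectors under Gromov-Hausdorff convergence and handling of $q\in Z\subset\partial B$ where the cone structure of $\Sigma_qB$ has boundary. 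Once the cone-level alignment is established, the rest of the proof is routine.
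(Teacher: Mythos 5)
Your decomposition into the two inequalities $L\le R$ and $L\ge R$ matches the paper's structure, but both of your arguments have genuine gaps, and for the hard direction the paper takes a much shorter and more robust route that you did not find.

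\textbf{Gap in Step 1 ($L\le R$).} You invoke the chord inequality $g'(0^+)\le g(t_0)/t_0\le g'(t_0^-)$ for $g=f\circ\alpha_p$, which requires $g$ to be ordinarily convex. But $f\in\conv^\kappa$ gives only $(f\circ\gamma)''+\kappa(f\circ\gamma)\ge 0$, which implies ordinary convexity only when $\kappa\le 0$ (since $f\ge 0$). For $\kappa>0$, e.g. $g(t)=\sin t$ on $[0,\pi/2]$ with $\kappa=1$, one has $g'(0^+)=1>2/\pi=g(\pi/2)/(\pi/2)$, so your claimed inequality fails. The paper's proof handles this by scaling to $\kappa=1$ and using that $(f')^2+f^2$ is nondecreasing along geodesics for $f\in\conv^1$; this yields the weaker bound $(f\circ\alpha)^+(0)^2\le|\nabla_p(-f)|^2/\cos^2 t_0$, and the correction factor $1/\cos^2 t_0\to 1$ as $\epsilon\to 0$. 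Your argument is salvageable but only after inserting this sinusoidal estimate.

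\textbf{Gap in Step 2 ($L\ge R$).} Your blow-up plan has two unresolved holes, which you partly acknowledge but do not close. First, the cone-level identity $|\nabla_{tu}(-d_qf)|=d_qf(u)$ needs $u$ to be a critical point of $d_qf|\Sigma_qB$ on \emph{all} of $\Sigma_qB$; but a $\dist_Z$-realizer direction $u$ at $q\in Z$ is only (near-)extremal among directions pointing away from $Z$, while $d_qf$ vanishes on $\Sigma_qZ$, so $u$ need not minimize $d_qf$ on the full sphere. Your phrase ``unit sphere of the ambient (sub)cone'' papers over exactly this issue. Second, ``gradient vectors of semiconcave functions pass to the limit'' under Gromov--Hausdorff convergence is asserted but not justified; this is not a result in the paper and would need its own proof (lower semicontinuity, as in Lemma \ref{lem:grad-foot}'s last claim, goes the wrong way here).

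\textbf{What the paper actually does for $L\ge R$.} No blow-up is needed. Given any realizer $\alpha$ with $\alpha(0)\in Z$, launch the unit-speed downward gradient curve $\eta_t$ from $\alpha(t)$. While $\eta_t$ stays within distance $\epsilon$ of $Z$, we have $(f\circ\eta_t)^+=-|\nabla_{\eta_t}(-f)|\le -C_\epsilon$ where $C_\epsilon=\inf\{|\nabla_q(-f)|:0<\dist_Z(q)\le\epsilon\}$. Hence $\eta_t$ reaches $f=0$ in arc length $\le f(\alpha(t))/C_\epsilon$, and it does not leave the $\epsilon$-neighborhood provided $t<\epsilon/2$ and $f(\alpha(t))<C_\epsilon\cdot\epsilon/2$. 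Since $\eta_t$ must travel from $\alpha(t)$ to $Z$ (so its arc length $s(t)\ge t$), integrating gives $f(\alpha(t))=\int_0^{s(t)}|\nabla_{\eta_t(u)}(-f)|\,du\ge t\cdot C_\epsilon$, and dividing by $t$ gives $(f\circ\alpha)^+(0)\ge C_\epsilon$. This is a two-line argument using only the existence and basic properties of gradient curves, which the paper has already set up; it is much cleaner than the tangent-cone approach and circumvents both of your unresolved difficulties.
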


\begin{proof}
\setcounter{claim}{0}
Let $p$ satisfy $0<\dist_Z( p)\le \epsilon$, and $ \alpha $ be a geodesic realizing  $\dist_Z( p)$ with footpoint $\alpha(0)\in Z$,  $ |\alpha^+(0)|=1$. 

\begin{claim}\label{cl:F-bds1}
$(f\circ\alpha)^+(0)^2\,\ge\, C_\epsilon^2$ where 
\[C_\epsilon\,=\,\inf\,\{|\nabla_q(-f)|: 
0<\dist_Z(q) \le \epsilon\}.
\]
\end{claim} 
The claim is trivial if $C_\epsilon=0$, so assume $C_\epsilon> 0$.  
Let $\eta_t$ be the unit-speed downward gradient curve of $f$ starting at
$\alpha(t)$. Then for \,$t$\, sufficiently small,
\,$\eta_t$\, remains within distance $\epsilon$ of $Z$  since 
\[
(f \circ \eta_t)^+ = - | \nabla_{\eta_t}(-f) | \le -C_\epsilon. \]
Indeed, if $t<\epsilon/2$ and $f(\alpha(t))<C_\epsilon\cdot\epsilon/2$, then $f\circ\eta_t$ reaches $0$ before $\dist_Z\circ\,\eta_t$  can exceed $\epsilon$.

Let $s(t) \ge t$ be the length of $\eta_t$.  
Then
\begin{align}
f(\alpha(t)) = \int_{[0, s(t)]} | \nabla_{\eta_t(u)}(-f) | du \ge s(t)\cdot C_\epsilon\ge\  t\cdot C_\epsilon.
\end{align}
The claim follows.

\begin{claim}\label{cl:F-bds2}
$(f\circ\alpha)^+(0)^2\,\le \,|\nabla_{p}(-f)|^2/\cos^2\epsilon$.
\end{claim}
We may assume for this claim that $\kappa = 1$, since scaling changes both sides of the
inequality by the same positive factor. 

By  $(f\circ\alpha)'$ we mean the left-sided or the right-
sided derivative,
which are equal a.e. by semiconvexity of $f$. For the same reason, $(f\circ\alpha)'$ is continuous except for countably many upward jumps.
When $(f\circ\alpha)' > 0$, the sinusoidal $1$-convexity of $f$ implies
that\, $(f\circ\alpha)']^2 + (f\circ\alpha)^2$\, is nondecreasing. 

Let us abbreviate
\,$f \circ \alpha$\, by $f$. Suppose $p=\alpha(t_0)$;  then $t_0\le\epsilon$.   
For $t_0<\pi$,
the $2$-point sine curve bounding $f$
above on $[0,\overline t\,]$, $0<\overline t<t_0$,  is
$$\frac{f(\overline t)}{\sin \overline t}\cdot\sin t.$$
Then  $$f'(\overline t) \ge \frac{f(\overline t)}{\sin\overline t}\cdot\cos\overline t,
\ \textrm{ that is, }
\ f(\overline t) \le \frac{\sin\overline t}{\cos\overline t}\cdot f'(\overline t).$$
In particular, $f'(\overline t) > 0$ for $t_0<\pi/2$.

The claim follows:
\begin{align*}
f^+(0)^2 &\le \lim_{\overline t\to t_0}\bigl(f'(\overline t)^2 + f(\overline t)^2\bigr)\\
	&\le\lim_{\overline t\to t_0}\bigl( f'(\overline t)^2/\cos^2\overline t\bigr)\\
	&\le|\nabla_{p}(-f)|^2/\cos^2t_0.
\end{align*}

The lemma follows from Claims \ref{cl:F-bds1} and \ref{cl:F-bds2}.
\end{proof}

\begin{thm}[Theorem \ref{thm:mainA}\,(\ref{mainA:fiber-f>0})\,\&\,(\ref{mainA:fiber-f=0})]\label{thm:A-fiber} 
Suppose $B \times_fF\in\CAT^\kappa$, where $(B, f,F)$ is  a WP-triple. Set $Z=f^{-1}(0)$.   
\begin{enumerate}

\item[(i)]
If $Z = \emptyset$, then $F\in\CAT^{\,\kappa_F}$
for $\kappa_F=\kappa\cdot (\inf\,f)^2$.

\item[(ii)]\label{cba:fiber-f=0}

If $Z \ne \emptyset$, then $F\in\CAT^{\,\kappa_F}$
for
$\kappa_F=
\min\,\{\kappa_{\textrm{foot}},\,\kappa_{\textrm{far}}\}$, where

\vspace{-5mm}

\begin{align*}
\kappa_{\textrm{foot}} &=\inf\,\{(f\circ\alpha)^+(0)^2 :  \alpha =\!\dist_Z \textrm{-realizer with footpoint } \alpha(0)\in Z,\, |\alpha^+(0)|=1\}
\notag\\
&= \liminf_{\epsilon \to 0}\ \{|\nabla_p(-f)|^2 : 
0<\dist_Z(p) \le \epsilon\},\notag\\
&\ \ \ \ \ \ \ \ \ \ \ \ \ \ \kappa_{\textrm{far}}= \inf \{\kappa\cdot f(p)^2 : \ \dist_Z( p ) \ge \varpi^\kappa/2\}.
\notag
\end{align*}

\end{enumerate}
\end{thm}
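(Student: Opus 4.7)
The plan is to prove the two cases of Theorem \ref{thm:A-fiber} separately. Case (i) mirrors Proposition \ref{prop:B-fiber-inf>0} in the CAT setting, while case (ii) rests on the Gauss equation (Theorem \ref{thm:gauss-eq}) combined with the extrinsic curvature bound for vertical leaves (Lemma \ref{lem:fiberbound}).

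For case (i), after rescaling $f$ and the metric of $F$ by reciprocal factors so that $\inf f = 1$ and $W = B\times_f F$ is unchanged, the task reduces to showing $F\in\CAT^\kappa$. Choose $p_i \in B$ with $f(p_i)\to 1$. Proposition \ref{prop:wp-proj}(\ref{vert-leaf-proj}) together with the length formula (\ref{eq:length}) yields
\[
|\varphi\,\psi|_F \ \le\ |(p_i,\varphi)\,(p_i,\psi)|_W \ \le\ f(p_i)\cdot|\varphi\,\psi|_F,
\]
so extrinsic distances in $\{p_i\}\times F$ converge to intrinsic distances in $F$. Since $W$ satisfies $(2+2)^\kappa$ on every quadruple, passing to the limit on quadruples in $\{p_i\}\times F$ gives $(2+2)^\kappa$ on quadruples in $F$, yielding $F\in\CAT^\kappa$.

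For case (ii), the equivalence of the two displayed formulas for $\kappa_{\textrm{foot}}$ is the content of Lemma \ref{lem:cat-grad-foot}, so I would use the gradient form throughout. The central pointwise estimate is: for each $p\in B$ with $f(p)>0$,
\[
\curv F \ \le\ \kappa\cdot f(p)^2 \ +\ |\nabla_p(-f)|^2.
\]
Indeed, Lemma \ref{lem:fiberbound} bounds the extrinsic curvature of the vertical leaf $\{p\}\times F \subset W$ by $|\nabla_p(-f)|/f(p)$; the Gauss equation (Theorem \ref{thm:gauss-eq}) then gives $\curv(\{p\}\times F) \le \kappa + |\nabla_p(-f)|^2/f(p)^2$; and rescaling by $f(p)^2$ via Proposition \ref{prop:wp-proj}(\ref{vert-leaf-proj}) (the intrinsic metric on the leaf is $f(p)$ times that of $F$) gives the displayed bound. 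Letting $p\to Z$ eliminates the first term and, by Theorem \ref{thm:grad}(a), leaves $\liminf |\nabla_p(-f)|^2 = \kappa_{\textrm{foot}}$; hence $F \in \CAT^{\kappa_{\textrm{foot}}}$.

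To obtain the $\kappa_{\textrm{far}}$ bound, I would specialize to points $p_0$ with $\dist_Z(p_0)\ge\varpi^\kappa/2$: the hypotheses $f\in\conv^\kappa$, $f\ge 0$ and $f|Z=0$ together force $f\circ\alpha$ along any $\dist_Z$-realizer $\alpha$ to reach its critical (maximal) region by the time $\dist_Z$ reaches $\varpi^\kappa/2$. At such a $p_0$, either $|\nabla_{p_0}(-f)|=0$ directly (whence by Proposition \ref{prop:wp-proj}(\ref{f-min}) the leaf $\{p_0\}\times F$ is totally geodesic, hence a convex subspace of $W$, so intrinsically $\CAT^\kappa$, and scaling gives $F \in \CAT^{\kappa\cdot f(p_0)^2}$), or one passes to nearby critical points and concludes by a limiting argument. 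Taking the infimum over all such $p_0$ yields $F\in\CAT^{\kappa_{\textrm{far}}}$, and combined with $F\in\CAT^{\kappa_{\textrm{foot}}}$ this gives $F \in \CAT^{\min(\kappa_{\textrm{foot}},\,\kappa_{\textrm{far}})}$.

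The main obstacle will be rigorously extracting the $\kappa_{\textrm{far}}$ bound: linking $\dist_Z(p_0)\ge\varpi^\kappa/2$ to the vanishing of $|\nabla_{p_0}(-f)|$ via sinusoidal $\kappa$-convexity is delicate, especially when no exact critical point sits in the far region and a perturbation/limiting argument is needed. A secondary technical point is that Lemma \ref{lem:fiberbound} is stated for $\kappa\le 0$; for $\kappa>0$ one should either adapt its cone-comparison argument by replacing $\cone_a$ with an appropriate $\kappa$-sinusoidal warped model, or apply the lemma after weakening $W\in\CAT^\kappa$ to $W\in\CAT^0$ and verify that the resulting pointwise bound still suffices after taking the relevant limits.
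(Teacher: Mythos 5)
Your case (i) argument is correct and actually more direct than the paper's, which treats it together with case (ii) via a cone reduction (see below); your limiting argument on quadruples via the $(2+2)^\kappa$ comparison transfers cleanly because the distances in $\{p_i\}\times F$ converge to those of $F$ from above. Your case (ii) for $\kappa_{\mathrm{foot}}$ with $\kappa \le 0$ is essentially the paper's Claims 1 and 2, using Lemma \ref{lem:fiberbound} and Theorem \ref{thm:gauss-eq} exactly as you do.

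However, there is a genuine gap in your handling of $\kappa_{\mathrm{far}}$ and of $\kappa>0$ generally, and you correctly sense it. Your claim that $\dist_Z(p_0)\ge\varpi^\kappa/2$ forces $|\nabla_{p_0}(-f)|=0$ is false: for $\kappa>0$, $f\in\conv^\kappa$ need only be sinusoidally convex, so $f\circ\alpha$ may still have nonzero slope beyond $\varpi^\kappa/2$ (e.g.\ $f=\sin$ on $[0,3\pi/4]$ with $Z=\{0\}$). Moreover, for $\kappa\le 0$ convexity of $f$ with $f|Z=0$, $Z\ne\emptyset$ actually forces $|\nabla_p(-f)|>0$ for all $p\notin Z$, so the case ``\,$|\nabla_{p_0}(-f)|=0$ directly'' does not arise, and Proposition \ref{prop:wp-proj}(\ref{f-min}) applies only at a genuine minimum of $f$, which for $Z\ne\emptyset$ means $f(p_0)=0$, contradicting $\dist_Z(p_0)>0$. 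The missing idea that resolves all of this in one stroke is the paper's cone reduction: since $\cone(B\times_fF)=\cone B\times_{\cone f}F\in\CAT^0$ and $\cone f\in\conv^0$, one applies the already-established $\kappa\le 0$ pointwise estimate to the cone. There $Z(\cone f)=\cone Z\cup\{o\}$, and a $\dist_{Z(\cone f)}$-realizer from $(p,1)$ has footpoint either on $\cone Z$ (when $\dist_Z(p)<\varpi^\kappa/2$, yielding $\kappa_{\mathrm{foot}}$ after rescaling) or at the vertex $o$ (when $\dist_Z(p)\ge\varpi^\kappa/2$, where $((\cone f)\circ\check\alpha)^+(0)=f(p)$, yielding $\kappa_{\mathrm{far}}$). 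This simultaneously dispenses with the $\kappa\le 0$ restriction in Lemma \ref{lem:fiberbound} and produces the dichotomy $\min\{\kappa_{\mathrm{foot}},\kappa_{\mathrm{far}}\}$ without any critical-point analysis. Finally, your proof only establishes $\curv F\le\kappa_F$ locally in case (ii); you still need a globalization step (the paper uses Lemma \ref{lem:fibergeo} for uniqueness and continuity of fiber geodesics, together with Alexandrov's patchwork globalization) to conclude $F\in\CAT^{\kappa_F}$.
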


\begin{proof}
\setcounter{claim}{0}
\begin{claim}\label{pt-F-comparison}
Suppose $\kappa \le 0$. If $ f(p)>0$, then
\[
\curv F \le \kappa\cdot  f(p)^2 + 
 | \nabla_p(-f) |^2.
\]
\end{claim}

If \,$ | \nabla_p(-f) |= 0$, then $p$ is a minimum point of $f$ since $f$ is convex.  By Proposition \ref {prop:wp-proj}\,(\ref{f-min}),  the intrinsic and extrinsic metrics of \,$\{p\} \times F$ agree.
Hence $\{p\} \times F\in\CAT^{\,\kappa}$, and  the claim follows by scaling.

Suppose \,$ | \nabla_p(-f) |> 0$.  By Lemma \ref{lem:fiberbound} and Theorem \ref{thm:gauss-eq},   $$\curv\,(\{p\} \times F)\le \bigl(\kappa \cdot f( p )^2 + | \nabla_p(-f) |^2\bigr)/f(p)^2.$$
Hence the claim.

\begin{claim}\label{F-comparison-0}
Suppose $\kappa \le 0$. 
If $Z \ne \emptyset$, then 
$$\curv F \ \le\  \liminf_{\epsilon \to 0}\ \{|\nabla_p(-f)|^2 : 
0<\dist_Z(p) \le \epsilon\}.$$

\end{claim}
The claim follows from  claim \ref{pt-F-comparison}.
\begin{claim}\label{F-comparison-inf} 
Suppose $\kappa \le 0$. 
If  $Z=\emptyset$, then $\curv F \le \kappa\cdot (\inf\,f)^2$.
\end{claim}

Consider the sublevel sets
$$S_i = \{p\in B: f( p )\le( \inf f) + 1/i\}.$$  Choose $C>0$.  If $i$ is sufficiently large, there is some $p_i\in S_i$ such that $|\nabla_{p_i}(-f)|\le C$.  Indeed, suppose not.  If $\eta$ is a downward gradient curve of $f$ starting at a point of $S_i$, then $f\circ\eta$ must take values $\le\inf f$, a contradiction.  Therefore  this claim follows from claim \ref{pt-F-comparison}.

\begin{claim}\label{F-comparison}
Let $\kappa$ be arbitrary. 

\begin{enumerate}

\item[(i)]
If $Z = \emptyset$, then $\curv F\le \kappa_F$
for $\kappa_F=\kappa\cdot (\inf\,f)^2$,

\item[(ii)]

If $Z \ne \emptyset$, then $\curv F\le \kappa_F$
for
$\kappa_F=
\min\,\{\kappa_{\textrm{foot}},\,\kappa_{\textrm{far}}\}$, where

\vspace{-5mm}

\begin{align*}
\kappa_{\textrm{foot}} &=\inf\,\{(f\circ\alpha)^+(0)^2 :  \alpha =\!\dist_Z \textrm{-realizer with footpoint } \alpha(0)\in Z,\, |\alpha^+(0)|=1\}
\notag\\
&\ \ \ \ \ \ \ \ \ \ \ \ \ \ \kappa_{\textrm{far}}= \inf \{\kappa\cdot f(p)^2 : \ \dist_Z( p ) \ge \varpi^\kappa/2\}.
\notag
\end{align*}
\end{enumerate}
\end{claim}

By claims \ref{F-comparison-0} and \ref{F-comparison-inf} and Lemma \ref{lem:cat-grad-foot}, the claim is true if $\K\le 0$. So assume $\kappa > 0$, and without loss of generality, set $\kappa = 1$.

Let $\cone f$ be the homogeneous linear extension of $f$ to $\cone B$. Then
\begin{align*}
\cone (B\times_fF)\,&=\, 
\R_{\ge
0}\times_{\id}(B\times_fF)\\
&=\, (\R_{\ge 0}\times_{\id}B)\times_{\cone f}F \, =\, \cone B\times_{\cone f}F.
 \end{align*}
 
We have $\cone(B \times_fF)\in \CAT^0$ (see Remark \ref{rem:cone}\,(b)). Moreover, since  $f\in \conv^1$, then $\cone f\in \conv^0$ (see \cite[Lemma 3.5]{ab-cbc}).
We also have
$$Z(\cone f) \,=\, \cone Z(f)\cup o \,\ne\, \emptyset.$$

Suppose  $\check\alpha:[0,a]\to\cone B$ is a geodesic realizing 
$\dist_{Z(\cone f)}(p,1)$ for some $p\in B$,  with footpoint $\check\alpha(0)\in Z(\cone f)$ and $|\check\alpha^+(0)|=1$.  By claim \ref{F-comparison-0} and Lemma \ref{lem:cat-grad-foot}, 
\begin{equation}\label{eq:cone-geo}
\curv F \le\bigl((\cone f)\circ\check\alpha\bigr)^+(0)^2.
\end{equation}

Suppose  $Z=Z(f)=\emptyset$. Then $Z(\cone f)=\{o\}$, $\check\alpha(0)=o$, and $\check\alpha$ is the radial segment from $o$ to $(p,1)$.  Thus $\bigl((\cone f)\circ\check\alpha\bigr)^+(0) = f( p )$.  (i) follows
 from (\ref{eq:cone-geo}) by rescaling.
 
On the other hand, suppose  $Z=Z(f)\ne \emptyset$. 

If $\dist_Z( p )\ge\pi/2$, we again have  $\check\alpha(0)=o$, so  $\check\alpha$ is the radial segment from $o$ to $(p,1)$. In this case,  $\bigl((\cone f)\circ\check\alpha\bigr)^+(0)=f(p)$ and \,$\curv F\le\kappa\cdot f(p)^2$\, by (\ref{eq:cone-geo}) and rescaling.  Therefore $\curv F\le \kappa_{\textrm{\,far}}$.
 
 Suppose $\dist_Z( p )<\pi/2$. Let $\alpha$ be a geodesic in $B$ realizing \,$\dist_Z( p)$, with footpoint\, $\alpha(0)\in Z$, $|\alpha^+(0)|=1$. Let
  $\cone\alpha$ be the cone  over the image of $\alpha$.  If $\alpha$ has arclength parameter $\theta$, then  the intrinsic and extrinsic metrics of $\cone\alpha$ agree and are isometric to  a sector  of $\E^2$ with polar coordinates
$(r,\theta)$.  In these coordinates,  $(\cone f)\,|\,\cone\alpha\,=\,r\cdot f(\theta)$.  We choose  $\check\alpha$ to lie in $\cone\alpha$, projecting to a reparametrization of $\alpha$.  (It is true that $\alpha$ and $\check\alpha$ are uniquely determined by $p$, but we do not use this fact.) A simple calculation in polar coordinates gives
\[
\bigl((\cone f)\circ\check\alpha\bigr)^+(0)=(f\circ\alpha)^+(0).
\]
Thus (ii) follows from (\ref{eq:cone-geo}) and rescaling. 

\begin{claim}\label{fiber-geos}
For $\epsilon$ sufficiently small, any $\varphi,\psi\in F$ such that $|\varphi\,\psi|_F < \varpi^{\kappa_F + \epsilon}$ 
are joined by a unique geodesic of $F$, and these geodesics depend continuously on $\varphi,\psi$.
\end{claim}

Let $\K_F$ be as in claim \ref{F-comparison}
. It follows from  claim \ref{F-comparison} that either
\begin{enumerate}
\item[(a)]
$\kappa\le 0$, or 
\item[(b)]
$\K>0$ and there is $p\in B-Z$ such that $\kappa_F + \epsilon\ge \kappa\cdot  f(p)^2$. 
\end{enumerate}
Indeed, in case (b), if  $Z\ne\emptyset$ then $\K_F\ge 0$ and \,$\K\cdot f( p)$\, may be taken to be positive and arbitrarily close to $0$.

In case (a), $B \times_fF \in\CAT^{\,0}$, and so $(p,\varphi), (p,\psi)\in \{p\}\times F$ are joined by a geodesic $\gamma$  in $B \times_fF$ that  depends uniquely and continuously on its  endpoints.  

In case (b), 
$$\varpi^{\check\kappa} 
 \le\varpi^{\kappa}, \text{ \ \ where } \check\kappa =\frac{\kappa_F + \epsilon}{f ( p )^2}.$$
Therefore if $|(p,\varphi)\,(\psi,p)|_{\{p\}\times F}<\varpi^{\check\kappa} $, then 
$(p,\varphi), (p,\psi)$ are joined by a geodesic $\gamma$  of length  $<\varpi^{\kappa}$ in $B \times_fF$. Since $B \times_fF \in\CAT^{\,\kappa}$, then $\gamma$  depends uniquely and continuously on its  endpoints.  

Now the claim follows from Lemma \ref{lem:fibergeo}.

\begin{claim}
$F\in\CAT^{\,\kappa_F}$  where $\kappa_F$ is defined in (i) and (ii).
\end{claim}
The claim follows from claims \ref{F-comparison} and \ref{fiber-geos}, completeness of $F$,  and   Alexandrov's patchwork globalization theorem (see \cite[Proposition II.4.9]{bh} or \cite[\emph{Definitions of CBA}]{akp-book}).
\end{proof}


\begin{thebibliography}{BFK}

\bibitem[AB 96]{ab-cbc} S. Alexander, R.  Bishop,
{\em Comparison Theorems for Curves of
Bounded Geodesic Curvature in Metric Spaces of Curvature Bounded
Above}, Differential Geometry and its Applications {\bf 6} (1996),
67-86.

\bibitem[AB 98]{had-wp}
S.  Alexander, R.  Bishop,
{\em Warped products of Hadamard spaces},
Manuscripta Math. {\bf 96} (1998), 487-505.

\bibitem[AB 04]{ab-wp}
S. Alexander, R. Bishop, {\em Curvature bounds for warped products of
metric spaces}, Geom. Funct. Anal. {\bf14} (2004), 1143-1181.


\bibitem[AB 06]{AB-gauss} 
S.  Alexander, R.  Bishop,
{\em Gauss equation and injectivity radii for subspaces in
spaces of curvature bounded above}, Geom. Dedicata {\bf 117} (2006), 65-84.

\bibitem[AKP]{akp-book}
S. Alexander, V. Kapovitch, A. Petrunin, {\em Alexandrov geometry} (book in preparation). http://www.math.psu.edu/petrunin/papers/alexandrov-geometry/

\bibitem[AKP 08]{akp1}
S. Alexander,  V. Kapovitch, A. Petrunin, 
{\em An optimal lower curvature bound for convex hypersurfaces in Riemannian manifolds}, Illinois J. Math. {\bf 52} (2008), 1031-1033.

\bibitem[BBI 01]{bbi}
D. Burago, Yu. Burago,  S. Ivanov, {\em A Course in Metric Geometry}, Graduate Studies in Mathematics,
Vol. 33. American Mathematical Society, Providence, Rhode Island (2001)

\bibitem[BGP 92]{bgp}
Yu. Burago, M. Gromov, G. Perelman, {\em A. D. Aleksandrov spaces with curvatures bounded below} (Russian), Uspekhi Mat. Nauk {\bf 47} (1992),  3-51; translation in Russian Math. Surveys {\bf 47}  (1992), 1-58.

\bibitem[BH 99]{bh}
M. Bridson, A. Haefliger,  {\em Metric Spaces of Non-positive Curvature}, Springer, Berlin (1999).

\bibitem[Ch 99]{C} C.-H. Chen, {\em Warped products of metric spaces of curvature bounded from above},
Trans. Amer. Math. Soc. {\bf 351} (1999), 4727-4740.

\bibitem[G 53]{g}
C. Goffman, {\em Real Functions}, Rinehart, New York (1953).


\bibitem[LS 97]{lang-schroeder}
U. Lang and V. Schroeder, {\em Kirszbraun's theorem and metric spaces of bounded curvature}, Geom. Funct.
Anal. {\bf 7} (1997),  535-560.

\bibitem[L 04]{lytchak-reach} A. Lytchak, {\em Geometry of sets of
positive reach},  Manuscripta Math. {\bf 115} (2004), 199-205.

\bibitem[Lt 05]{lytchak_open} A. Lytchak, {\em Open Map Theorem for Metric Spaces},  Algebra i Analiz {\bf 17} (2005), 139-159 (Russian); translation in St. Petersburg Math. J. {\bf 17} (2006), 477-491.


\bibitem[Pr 91]{Pm1}
G. Perelman,
{\em Alexandrov's spaces with curvature bounded from below II},
preprint LOMI (1991).

\bibitem[PP 94]{PP}
G. Perelman, A. Petrunin,
{\em Quasigeodesics and Gradient Curves in Alexandrov Spaces},
preprint (1994), www.math.psu.edu/petrunin.

\bibitem[Pt 06]{petrunin_survey}
A. Petrunin, {\em Semiconcave functions in Alexandrov's geometry}, Surveys in differential geometry. Vol. XI, 137Ð201,
Surv. Differ. Geom., 11, Int. Press, Somerville, MA   (2007). 

\bibitem[Pt 12]{globalization}
A. Petrunin, {\em A globalization for non-complete but geodesic spaces}, 
to appear in Math. Annalen. 
arXiv:1208.3155v3 [math.DG] .

\bibitem[Pl 02]{pl} C. Plaut, {\em Metric spaces of curvature $\ge k$}.
R. Daverman, R. Sher (eds.), {\em Handbook of Geometric Topology},
Elsevier, (2002), 819-898.

\bibitem[S 93]{shio} Shiohama, {\em An introduction to the geometry of Alexandrov spaces}. Lecture Notes Series, 8. Seoul National University, Research Institute of Mathematics, Global Analysis Research Center, Seoul (1993).

\end{thebibliography}
\end{document}